\documentclass{article}

\usepackage{latexsym}

\usepackage[latin1]{inputenc}
\usepackage{amsfonts,amsmath,amssymb}
\parindent 15pt

\usepackage{a4wide}

\newtheorem{conj}{\textbf{Conjecture}}[section]
\newtheorem{thm}[conj]{\textbf{Theorem}}
\newtheorem{lem}[conj]{\textbf{Lemma}}
\newtheorem{definition}[conj]{\textbf{Definition}}
\newtheorem{propo}[conj]{\textbf{Proposition}}

\newtheorem{corol}[conj]{Corollary}

\newenvironment{proof}{\paragraph{Proof}}{}
\newtheorem{rem}[conj]{Remark}

\newenvironment{theorem}{\begin{thm}\sl}{\end{thm}}
\newenvironment{rtheorem}[1]{\begin{thm}[#1]\sl}{\end{thm}}

\newenvironment{remark}{\begin{rem}\rm}{\end{rem}}

\newenvironment{lemma}{\begin{lem}\sl}{\end{lem}}

\newenvironment{corollary}{\begin{corol}\sl}{\end{corol}}

\newenvironment{proposition}{\begin{propo}\sl}{\end{propo}}

\newcommand\aheight{{\mathrm{h_a}}}
\newcommand\pheight{{\mathrm{h_p}}}

\newcommand\vheight{{\mathrm h}}

\newcommand\Q{\mathbb{Q}}
\newcommand\C{\mathbb{C}}
\newcommand\Z{\mathbb{Z}}
\newcommand\R{\mathbb{R}}
\newcommand\barq{{\bar{\mathbb{Q}}}}
\newcommand\bark{{\bar{\mathbb{K}}}}
\newcommand\barr{{\bar R}}
\newcommand\barv{{\bar v}}

\newcommand\PP{{\mathbb P}}
\newcommand\K{{\mathbb K}}
\renewcommand\L{{\mathbb L}}
\newcommand\M{{\mathbb M}}

\newcommand\DD{{\mathcal D}}

\newcommand\norm{{\mathcal N}}
\newcommand\OO{{\mathcal O}}
\newcommand\CC{{\mathcal C}}
\renewcommand\AA{{\mathcal{A}}}
\newcommand\QQ{{\mathcal{Q}}}



\newcommand\ord{\mathrm{ord}}

\newcommand\ram{\mathrm{Ram}}

\newcommand\Den{\mathrm{Den}}
\newcommand\Num{\mathrm{Num}}

\newcommand\Ess{\mathrm{Ess}\,}

\newcommand\ual{{\underline\alpha}}

\newcommand\ua{{\underline a}}
\newcommand\uzero{{\underline 0}}


\newcommand\gerd{{\mathfrak{d}}}

\newcommand\hatf{{\widehat{F}}}
\newcommand\hatr{{\widehat{R}}}

\newcommand\qed{\hfill$\square$}

\newcommand\tilcc{{\widetilde \CC}}
\newcommand\tile{{\widetilde e}}
\newcommand\tilf{{\tilde f}}
\newcommand\tilg{{\widetilde g}}
\newcommand\tiln{{\widetilde n}}
\newcommand\tilp{{\widetilde P}}
\newcommand\tilP{{\widetilde P}}
\newcommand\tilm{{\widetilde m}}
\newcommand\tilq{{\widetilde Q}}
\newcommand\tilr{{\widetilde R}}
\newcommand\tilt{{\widetilde T}}
\newcommand\tilw{{\widetilde w}}
\newcommand\tilz{{\widetilde z}}
\newcommand\tilZ{{\widetilde Z}}
\newcommand\tilaa{{\widetilde \AA}}
\newcommand\tilqq{{\widetilde \QQ}}
\newcommand\tilzeta{{\tilde \zeta}}
\newcommand\tilom{{\widetilde \Omega}}
\newcommand\tilV{{\widetilde V}}
\newcommand\tily{{\widetilde y}}
\newcommand\tilY{{\widetilde Y}}
\newcommand\tilN{{\widetilde N}}
\newcommand\tilomega{{\widetilde \omega}}

\newcommand\genus{{\mathbf g}}
\newcommand\tilgen{{\widetilde \genus}}
\newcommand\tilam{{\widetilde \Lambda}}



\makeatletter

\renewcommand*\l@section[2]{%
  \ifnum \c@tocdepth >\z@
    \addpenalty\@secpenalty
    \addvspace{0.3em \@plus\p@}%
    \setlength\@tempdima{1.5em}%
    \begingroup
      \parindent \z@ \rightskip \@pnumwidth
      \parfillskip -\@pnumwidth
      \leavevmode \bfseries
      \advance\leftskip\@tempdima
      \hskip -\leftskip
      #1\nobreak\hfil \nobreak\hb@xt@\@pnumwidth{\hss #2}\par
    \endgroup
  \fi}
  
 \makeatother

\author{%

Yuri Bilu\footnotemark \; (Bordeaux), \\
Marco Strambi (Livorno), \\

Andrea Surroca\footnotemark \; (Basel)}

\title{Quantitative Chevalley-Weil Theorem for Curves}
\date{\today}

\setcounter{tocdepth}1

\begin{document}

\hfuzz 4pt

\maketitle

\footnotetext[1]{Supported by the ANR project HAMOT, and  \textit{Ambizione} fund PZ00P2\_121962 of the Swiss National Foundation}
\footnotetext[2]{Supported by the Marie Curie IEF 025499 of the European Community and the Ambizione Fund PZ00P2\_121962 of the Swiss National Science Foundation}

\setcounter{footnote}2

\begin{abstract}
The classical Chevalley-Weil theorem asserts that for an \'etale covering  of projective varieties over a number field~$\K$, the discriminant of the field of definition of the fiber over a $\K$-rational point is uniformly bounded. 
We obtain a fully explicit version of this theorem   in dimension~$1$.
\end{abstract}


\bigskip


{\footnotesize
\tableofcontents}

\section{Introduction}
The Chevalley-Weil theorem is one of the most basic principles of the Diophantine analysis. Already Diophantus of Alexandria routinely used reasoning of the kind ``if~$a$ and~$b$ are `almost' co-prime integers and $ab$ is a square, then each of~$a$ and~$b$ is `almost' a square''.  The Chevalley-Weil~\cite{CW32,We35} theorem provides a general set-up for this kind of arguments. 

\begin{rtheorem}{Chevalley-Weil}
Let ${\tilV\stackrel\phi\to V}$ be a finite \'etale covering of normal projective varieties, defined over a number field~$\K$. Then there exists a non-zero integer~$T$ such that for any   ${P\in V(\K)}$ and ${\tilP \in \tilV(\bar \K)}$ such that ${\phi(\tilP)=P}$, the relative discriminant of ${\K(\tilP)/\K}$ divides~$T$. 
\end{rtheorem}

There is also a similar statement for coverings of affine varieties and integral points. See \cite[Section~2.8]{La83} or \cite[Section~4.2]{Se97} for more details.

The Chevalley-Weil theorem is indispensable in the Diophantine analysis, because it reduces a Diophantine problem on the variety~$V$ to that on the covering variety~$\tilV$, which can often be simpler to deal. In particular, the Chevalley-Weil theorem is used, albeit implicitly, in the proofs of the great finiteness theorems of Mordell-Weil, Siegel and Faltings. 

In view of all this, a quantitative version of the Chevalley-Weil theorem, at least in dimension~$1$, would be useful to have. One such version appears in Chapter~4 of~\cite{Bi93}, but it is not explicit in all parameters; neither is the version  recently suggested by Draziotis and Poulakis \cite{DP09,DP10}, who also make some other restrictive assumptions (see Remark~\ref{rdrap} below). 

In the present article we present a version of the Chevalley-Weil theorem in dimension~$1$, which is explicit in all parameters and considerably sharper than the previous versions. Our approach is  different from that of \cite{DP09,DP10} and goes back to~\cite{Bi93,Bi97}. 

To state our principal results, we have to introduce some notation. Let~$\K$ be a number field,~$\CC$
an absolutely irreducible smooth projective curve~$\CC$ defined over~$\K$, and ${x\in \K(\CC)}$ a non-constant $\K$-rational function on~$\CC$. We also fix a covering ${\tilcc \stackrel\phi\to\CC}$ of~$\CC$ by another smooth irreducible projective curve~$\tilcc$; we assume that both~$\tilcc$ and the covering~$\phi$ are defined over~$\K$. We consider $\K(\CC)$ as a subfield of $\K(\tilcc)$; in particular, we identify the functions ${x\in \K(\CC)}$ and ${x\circ\phi\in \K(\tilcc)}$.

We also fix
one more rational function ${y\in \K(\CC)}$ such that ${\K(\CC)=\K(x,y)}$ (existence of such~$y$ follows from the primitive element theorem). Let ${f(X,Y)\in \K[X,Y]}$ be the $\K$-irreducible polynomial such that ${f(x,y)=0}$ (it is well-defined up to a constant factor). Since~$\CC$ is absolutely irreducible, so is the polynomial $f(X,Y)$. We put 
${m=\deg_X f}$ and ${n=\deg_Yf}$.

Similarly, we fix a function ${\tily \in \K(\tilcc)}$ such that ${K(\tilcc)=\K(x,\tily )}$. We let ${\tilf(X,\tilY ) \in \K[X,\tilY ]}$ be an irreducible polynomial  such that ${\tilf(x,\tily )=0}$. We put 
${\tilm=\deg_X \tilf}$ and ${\tiln=\deg_Y\tilf}$. 
We denote by~$\nu$ the degree of the covering~$\phi$, so that ${\tiln=n\nu}$. 

\begin{remark}
Equations ${f(X,Y)=0}$ and ${\tilf(X,\tilY)=0}$ define affine plane models of our curves~$\CC$ and~$\tilcc$; \textsl{we do not assume these models non-singular}. 
\end{remark}

In the sequel, $\pheight(\cdot)$ and $\aheight(\cdot)$ denote the projective and the affine absolute logarithmic heights, respectively, see Section~\ref{snota} for the definitions. We also define normalized logarithmic discriminant $\partial_{\L/\K}$ and  the height $\vheight(S)$ of a finite set of places~$S$ as 
$$
\partial_{\L/\K}=\frac{\log\norm_{\K/\Q}\DD_{\L/\K}}{[\L:\Q]}, \qquad \vheight(S) =\frac{\sum_{v\in S}\log\norm_{\K/\Q}(v)}{[\K:\Q]};
$$
see Section~\ref{snota} for the details. 

Put
\begin{equation}
\label{eom}
\begin{gathered}
\Omega= mn^2 \bigl(\pheight(f)+2m+2n\bigr),\qquad
\tilom= \tilm\tiln^2 \bigl(\pheight(\tilf)+2\tilm+2\tiln\bigr),\\
\Upsilon=2\tiln\bigl(\tilm\pheight(f)+m\pheight(\tilf)\bigr). 
\end{gathered}
\end{equation}

\begin{rtheorem}{``projective'' Chevalley-Weil theorem}
\label{tproj}
In the above set-up, assume that the covering ${\tilcc\stackrel\phi\to \CC}$ is unramified. Then for every ${P\in \CC(\bark)}$ and ${\tilp\in \tilcc(\bark)}$ such that ${\phi(\tilp)=P}$ we have
$$
\partial_{\K(\tilp)/\K(P)}\le 400(\Omega+\tilom) +2\Upsilon +6m\tiln^2. 
$$
\end{rtheorem}

\begin{remark}
\label{rdrap}
Draziotis and Poulakis \cite[Theorem~1.1]{DP10}, assume that~$\CC$ is a non-singular plane curve (which is quite restrictive) and that ${P\in \CC(\K)}$. Their set-up is slightly different, and the two estimates cannot be compared directly. But it would be safe to say that their estimate is not sharper than 
$$
\partial_{\K(\tilp)/\K(P)}\le cN^{30}\tilN^{13}\left(\pheight(f)+ \pheight(\tilf)\right) + C,
$$
where
${N=\deg f}$,  ${\tilN=\deg \tilf}$, the constant~$c$ is absolute and~$C$ depends of~$N$,~$\tilN$ and the degree $[\K:\Q]$. 
\end{remark}

Now let~$S$ be a finite set of places of~$\K$, including all the archimedean places. A point ${P\in \CC(\bark)}$ will be called \textsl{$S$-integral} if for any ${v\in M_\K\smallsetminus S}$ and any extension~$\barv$ of~$v$ to~$\bark$ we have ${|x(P)|_\barv\le 1}$. 

\begin{rtheorem}{``affine'' Chevalley-Weil theorem}
\label{taff}
In the above set-up, assume that the covering ${\tilcc\stackrel\phi\to \CC}$ is unramified outside the poles of~$x$. Then for every $S$-integral point ${P\in \CC(\bark)}$ and ${\tilp\in \tilcc(\bark)}$ such that ${\phi(\tilp)=P}$ we have
\begin{equation}
\label{eaff}
\partial_{\K(\tilp)/\K(P)}\le   300(\Omega+\tilom) +\Upsilon+3m\tiln^2+\vheight(S). 
\end{equation}
\end{rtheorem}

Again, Draziotis and Poulakis \cite[Theorem~1.1]{DP09} obtain a less sharp result under more restrictive assumptions.

It might be also useful to have a statement free of the defining equations of the curves~$\CC$ and~$\tilcc$. Using the result of~\cite{BS10}, we obtain versions of Theorems~\ref{tproj} and~\ref{taff}, which depend only on the degrees and the ramification points of our curves over~$\PP^1$. For a finite set ${A\subset \PP^1(\bark)}$ we define $\aheight(A)$ as the affine height of the vector whose coordinates are the finite elements of~$A$.

\begin{theorem}
\label{tminim}
Let~$A$ be a finite subset of $\PP^1(\bark)$ such that the covering ${\CC\stackrel x\to\PP^1}$ is unramified outside~$A$. Put 
$$
\delta=[\K(A):\K], \qquad \tilgen =\genus(\tilcc), \qquad \Lambda= \bigl((\tilgen+1)\tiln\bigr)^{25(\tilgen+1)\tiln} +2(\delta-1).
$$ 
\begin{enumerate}
\item
\label{iproj}
Assume that the covering ${\phi:\tilcc\to \CC}$ is unramified. Then for every ${P\in \CC(\bark)}$ and ${\tilp\in \tilcc(\bark)}$ such that ${\phi(\tilp)=P}$ we have
$$
\partial_{\K(\tilp)/\K(P)}  \le \Lambda\bigl(\aheight(A)+1\bigr).
$$
\item
Assume that the covering ${\phi:\tilcc\to \CC}$ is unramified outside the poles of~$x$, and let~$S$ be as above. Then for every $S$-integral point ${P\in \CC(\bark)}$ and ${\tilp\in \tilcc(\bark)}$ such that ${\phi(\tilp)=P}$ we have
$$
\partial_{\K(\tilp)/\K(P)} \le \vheight(S)+  \Lambda\bigl(\aheight(A)+1\bigr). 
$$
\end{enumerate}
\end{theorem}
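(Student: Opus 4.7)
The strategy is to deduce Theorem~\ref{tminim} from Theorems~\ref{tproj} and~\ref{taff} by producing plane models $f(X,Y)$ of~$\CC$ and $\tilf(X,\tilY)$ of~$\tilcc$ whose defining polynomials have explicitly controlled height. The key external input, advertised in the paragraph preceding the theorem, is the main result of~\cite{BS09}: given a covering $\CC\to\PP^1$ of degree~$n$ and genus~$\genus$ unramified outside a finite set $A\subset\PP^1$, one can construct a primitive element~$y$ with $\K(A)(\CC)=\K(A)(x,y)$ whose minimal polynomial~$f$ satisfies a bound of the shape $\pheight(f)\le \bigl((\genus+1)n\bigr)^{O((\genus+1)n)}\bigl(\aheight(A)+1\bigr)$.

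First I would verify that both $\CC\to\PP^1$ and $\tilcc\to\PP^1$ (via $x\circ\phi$) are unramified outside~$A$. For~$\CC$ this is the hypothesis. For~$\tilcc$, multiplicativity of ramification indices shows that $x\circ\phi$ can ramify above $Q\in\PP^1$ only if~$x$ ramifies there (in case~\ref{iproj}), or if in addition $Q=\infty$ (in part~(2)), and the point $\infty$ contributes nothing to $\aheight(A)$. Applying \cite{BS09} to both coverings therefore yields models with
$$
\pheight(f),\ \pheight(\tilf)\ \le\ \bigl((\tilgen+1)\tiln\bigr)^{O((\tilgen+1)\tiln)}\bigl(\aheight(A)+1\bigr),
$$
using $\genus\le\tilgen$ (Riemann-Hurwitz applied to the unramified, resp.\ almost unramified, covering $\phi$) and $n\le\tiln=n\nu$. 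The degrees $m,n,\tilm,\tiln$ of the resulting models are bounded by polynomial functions of $\tiln$ and $\tilgen$ that emerge from the construction in~\cite{BS09}.

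A technical subtlety is that the models thus obtained live over $\K':=\K(A)$ rather than over~$\K$. I would apply Theorems~\ref{tproj} and~\ref{taff} over~$\K'$ and descend via the tower formula for the normalized logarithmic discriminant,
$$
\partial_{\K(\tilp)/\K(P)}\ \le\ \partial_{\K'(\tilp)/\K(P)}\ =\ \partial_{\K'(\tilp)/\K'(P)}+\partial_{\K'(P)/\K(P)}\ \le\ \partial_{\K'(\tilp)/\K'(P)}+\partial_{\K'/\K},
$$
the last inequality being the standard fact that discriminants do not grow under compositum with a disjoint base field. The discriminant $\partial_{\K(A)/\K}$ is controlled by a routine Hermite–Minkowski-type estimate of the form $2(\delta-1)(\aheight(A)+1)$, applied to a primitive element of $\K(A)/\K$ whose height is bounded in terms of~$\aheight(A)$; this is exactly the source of the additive term $2(\delta-1)$ in the definition of~$\Lambda$. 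The term $\vheight(S)$ in part~(2) is carried unchanged from Theorem~\ref{taff}.

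The final step is purely computational: substitute the height bounds for~$f$ and~$\tilf$ into the definitions~\eqref{eom} of $\Omega,\tilom,\Upsilon$, and observe that each of these expressions is linear in $\pheight(f)$ and $\pheight(\tilf)$ with only polynomial prefactors in the degrees. The main obstacle is neither conceptual nor the invocation of~\cite{BS09}, but the combinatorial bookkeeping required to check that the generous exponent $25(\tilgen+1)\tiln$ really absorbs the combined effect of the polynomial-degree prefactors in~\eqref{eom} and the exponential growth produced by~\cite{BS09}, uniformly in both parts of the statement.
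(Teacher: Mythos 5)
Your overall route is the same as the paper's: invoke the quantitative Riemann existence theorem of \cite{BS09} (Theorem~\ref{ter}) for both coverings ${\CC\to\PP^1}$ and ${\tilcc\to\PP^1}$, bound $\partial_{\K(A)/\K}$ by a Silverman-type height estimate (this is Lemma~\ref{lsilv}, giving ${2(\delta-1)\aheight(A)+\log\delta}$ -- exactly your ``Hermite--Minkowski-type'' term $2(\delta-1)$), apply Theorem~\ref{tproj} (resp.\ Theorem~\ref{taff}) over the enlarged base field, and descend by additivity in towers and the base-extension property of $\partial$. Your remarks on the ramification of ${x\circ\phi}$ and on ${\genus\le\tilgen}$, ${n\le\tiln}$ are also in line with what the paper does implicitly.

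There is, however, one genuine gap: you assert that the models produced by \cite{BS09} ``live over $\K':=\K(A)$'' and you run the whole descent with $\K'=\K(A)$. That is not what Theorem~\ref{ter} provides. The polynomials~$f$ and~$\tilf$ have coefficients in~$\bark$, generating together with~$A$ a number field~$\L$ which is in general a \emph{proper} extension of $\K(A)$; the theorem compensates by guaranteeing ${\partial_{\L/\K(A)}\le \Lambda'(\aheight(A)+1)}$ (and similarly ${\tilam'(\aheight(A)+1)}$ for~$\tilf$). As written, your application of Theorems~\ref{tproj} and~\ref{taff} over $\K(A)$ cannot be executed, since those theorems require the curves, the covering and the defining polynomials to be defined over the base field, and $f,\tilf$ need not have coefficients in $\K(A)$. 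The repair is exactly the paper's argument: work over~$\L$, write ${\partial_{\K(\tilp)/\K(P)}\le \partial_{\L(\tilp)/\L(P)}+\partial_{\L/\K}}$ with ${\partial_{\L/\K}\le \partial_{\L/\K(A)}+\partial_{\K(A)/\K}\le \bigl(\Lambda'+\tilam'+2(\delta-1)\bigr)\bigl(\aheight(A)+1\bigr)}$, and then absorb everything, together with $2(\Omega+\tilom+\Upsilon)$, into $\Lambda\bigl(\aheight(A)+1\bigr)$. Since the omitted term $\partial_{\L/\K(A)}$ is of the same order of magnitude as your main term, the final numerology still works out, but the accounting must include it explicitly, via the ``Moreover'' clause of Theorem~\ref{ter} which your proposal never invokes.
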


\paragraph{Acknowledgments}
The authors thank Carlo Gasbarri for useful discussions. 
Yuri Bilu thanks the Mathematical Institute of the University of Basel for hospitality in late 2011, when a substantial part of this work was done. 

We thank the anonymous referee for the encouraging report, and for detecting an inaccuracy in the original version.

\section{Preliminaries}
\label{snota}

Let~$\K$ be any number field  and let
${M_\K=M_\K^0\cup M_\K^\infty}$ be the set of its places, with $M_\K^0$ and $M_\K^\infty$ denoting the sets of finite and infinite places, respectively. For every place $v\in M_\K$  we  normalize the corresponding valuation $|\cdot|_v$ so
that its restriction to~$\Q$ is the standard infinite or $p$-adic 
valuation.
Also,  we let $\K_{v}$ be the $v$-adic completion
of $\K$, (in particular,~$\K_v$ is  $\R$ or $\C$ when~$v$ is infinite).

\paragraph{Heights}
For a vector  ${\ual =(\alpha_1, \ldots, \alpha_N) \in \barq^N}$ 
we define, as usual, the \textsl{absolute logarithmic projective height} and  \textsl{absolute logarithmic affine height} (in the sequel simply \textsl{projective} and \textsl{affine heights}) by\footnote{In the definition of the projective height we assume that at least one coordinate of~$\ual$ is non-zero.}
\begin{equation}
\label{ehei}
\pheight(\ual)=\frac{1}{[\K:\Q]}\sum_{v \in M_{\K}} [\K_v:\Q_v] \log\|\ual\|_v ,\qquad
\aheight(\ual)=\frac{1}{[\K:\Q]}\sum_{v \in M_{\K}} [\K_v:\Q_v] \log^+\|\ual\|_v, 
\end{equation}
where~$\K$ is any number field containing the coordinates of~$\ual$,
$$
\|\ual\|_v= \max \{|\alpha_0|_{v}, \ldots, |\alpha_N|_{v}\}
$$
and ${\log^+=\max\{\log,0\}}$. With our choice of normalizations, the right-hand sides in~\eqref{ehei} are independent  of the choice of the field~$\K$. For a polynomial~$f$ with algebraic coefficients we denote by  ${\pheight(f)}$ and by ${\aheight(f)}$ the projective height and the affine height of the vector of its coefficients respectively, ordered somehow.

\paragraph{Logarithmic discriminant}
Given an extension ${\L/\K}$ of number fields, we denote by $\partial_{\L/\K}$ the \textsl{normalized logarithmic relative discriminant}:
$$
\partial_{\L/\K}=\frac{\log\norm_{\K/\Q}\DD_{\L/\K}}{[\L:\Q]},
$$
where $\DD_{\L/\K}$ is the usual relative discriminant and $\norm_{\K/\Q}$ is the norm map.
The properties of this quantity are summarized in the following proposition. 
\begin{proposition}
\begin{enumerate}
\item
(additivity in towers)\quad  If ${\K\subset\L\subset \M}$ is a tower of number fields, then  
${\partial_{\M/\K}= \partial_{\L/\K}+\partial_{\M/\L}}$.

\item (base extension)\quad
If~$\K'$ is a finite extension of~$\K$ and ${\L'=\L\K'}$ then
${\partial_{\L'/\K'}\le \partial_{\L/\K}}$.

\item
(triangle inequality)\quad If~$\L_1$ and~$\L_2$ are two extensions of~$\K$, then 
${\partial_{\L_1\L_2/\K}\le \partial_{\L_1/\K}+  \partial_{\L_2/\K}}$.

\end{enumerate}
\end{proposition}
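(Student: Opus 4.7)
The plan is to reduce all three statements to local formulas by rewriting $\partial_{\L/\K}$ as a sum over the finite places of~$\L$. Starting from the global prime factorization $\DD_{\L/\K} = \prod_{v\in M_\K^0} \vp_v^{\sum_{w|v} f(w|v) d(w|v)}$, where $d(w|v)$ denotes the exponent of the local different $\gerd_{\L_w/\K_v}$, and applying $\norm_{\K/\Q}$, the definition of $\partial_{\L/\K}$ rearranges into
$$
\partial_{\L/\K} = \frac{1}{[\L:\Q]}\sum_{w\in M_\L^0} d(w|v_w)\,\log \norm_{\L/\Q}(w),
$$
where $v_w$ denotes the restriction of~$w$ to~$\K$, using $\log \norm_{\L/\Q}(w) = f(w|v_w)\log\norm_{\K/\Q}(v_w)$ to switch from summing over~$v$ to summing over~$w$. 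This identity is the sole computational ingredient.

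For additivity in towers~(1), I would insert the local tower formula for differents, $d(w''|v) = d(w''|w) + e(w''|w) d(w|v)$ for $\K_v\subset\L_w\subset\M_{w''}$, into the above expression for $\partial_{\M/\K}$. The first term yields $[\M:\Q]\partial_{\M/\L}$ directly. The second yields $\sum_{w} d(w|v) \log\norm_{\L/\Q}(w) \cdot \sum_{w''|w} e(w''|w) f(w''|w) = [\M:\L][\L:\Q]\partial_{\L/\K} = [\M:\Q]\partial_{\L/\K}$, after using $\log \norm_{\M/\Q}(w'') = f(w''|w)\log\norm_{\L/\Q}(w)$ and the local degree identity $\sum_{w''|w} e(w''|w) f(w''|w) = [\M:\L]$. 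Dividing by $[\M:\Q]$ gives the asserted equality.

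For base extension~(2), the main local input is that since $\L' = \L\K'$, the completion $\L'_{w'}$ equals the compositum $\L_w \cdot \K'_{v'}$ inside a fixed algebraic closure of $\K_v$, and a standard lemma on compositum of local fields (see Serre, \emph{Local Fields}, Ch.~III) gives $\gerd_{\L'_{w'}/\K'_{v'}}$ divides $\gerd_{\L_w/\K_v}\OO_{\L'_{w'}}$, which translates to the numerical bound $d(w'|v') \le e(w'|w)\,d(w|v)$. Substituting into the sum-formula for $\partial_{\L'/\K'}$, grouping by the place~$w$ of~$\L$ lying below each~$w'$, and applying $\sum_{w'|w} e(w'|w) f(w'|w) = [\L':\L]$ together with $[\L':\L][\L:\Q]=[\L':\Q]$, yields
$$
\partial_{\L'/\K'} \le \frac{[\L':\L]}{[\L':\Q]}\sum_{w\in M_\L^0} d(w|v_w)\log\norm_{\L/\Q}(w) = \partial_{\L/\K}.
$$

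Finally, the triangle inequality~(3) follows formally from (1) and~(2): $\L_1\L_2$ is the compositum of $\L_2$ with $\L_1$ over $\K$, hence $\L_1\L_2/\L_1$ is the base change of $\L_2/\K$ by~$\L_1$, so $\partial_{\L_1\L_2/\L_1}\le\partial_{\L_2/\K}$ by~(2), and additivity applied to $\K\subset\L_1\subset\L_1\L_2$ gives $\partial_{\L_1\L_2/\K} = \partial_{\L_1/\K} + \partial_{\L_1\L_2/\L_1} \le \partial_{\L_1/\K} + \partial_{\L_2/\K}$. The only non-routine ingredient in the whole argument is the local compositum statement underlying~(2); everything else is bookkeeping with residue-field degrees and ramification indices.
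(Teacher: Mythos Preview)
Your argument is correct. The local rewriting of $\partial_{\L/\K}$ as $[\L:\Q]^{-1}\sum_{w\in M_\L^0} d(w|v_w)\log\norm_{\L/\Q}(w)$ is exactly the right computational handle, and each of the three items is derived cleanly from the corresponding local fact (multiplicativity of differents in towers, the compositum bound $\gerd_{\L'_{w'}/\K'_{v'}}\mid \gerd_{\L_w/\K_v}\OO_{\L'_{w'}}$, and the formal combination of the two).

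As for comparison: the paper does not actually prove this proposition. It is stated as a list of properties and immediately followed by the remark ``These properties will be used without special reference,'' treating them as standard background. So there is no paper proof to compare against; you have supplied a complete self-contained argument where the authors chose to omit one.
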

These properties will be used without special reference. 

\paragraph{Height of a set of places}
Given a number field~$\K$ and finite set of places ${S\subset M_\K}$, we define the \textsl{absolute logarithmic height} of this set as 
$$
\vheight(S) =\frac{\sum_{v\in S}\log\norm_{\K/\Q}(v)}{[\K:\Q]}, 
$$
where the norm $\norm_{\K/\Q}(v)$ of the place~$v$ is the norm of the corresponding prime ideal if~$v$ is finite, and is set to be~$1$ when~$v$ is infinite. 
The properties of this height are summarized in the following proposition.

\begin{proposition}
\label{pvhei}
\begin{enumerate}
\item
\label{isl}
(field extension)\quad Let~$\L$ be an extension of~$\K$ and~$S_\L$ the set of extensions of the places from~$S$ to~$\L$. Assume that  no place from~$S$ ramifies in~$\L$. Then ${\vheight(S)=\vheight(S_\L)}$. Without this assumption we have the inequalities  ${\vheight(S_\L)\le\vheight(S)\le [\L:\K]\vheight(S_\L)}$.

\item (denominators and numerators)\quad
\label{idenum}
For ${\ual\in \K^N}$ let the sets  $\Den(\ual)$ and $\Num(\ual)$ consist of all ${v\in M_\K}$  such that ${\|\ual\|_{v}>1}$, respectively, ${\|\ual\|_{v}<1}$.  Then
\begin{align*}
\vheight\bigl(\Den_\K(\ual)\bigr)&\le \aheight(\ual),\\
\vheight\bigl(\Num_\K(\ual)\bigr)&\le \bigl(\aheight(\ual)-\pheight(\ual)\bigr) \qquad (\ual\ne \uzero).
\end{align*}
In particular, for ${\alpha \in \K^\ast}$ we have
${\vheight\bigl(\Num_\K(\alpha)\bigr)\le \aheight(\alpha)}$. \qed

\end{enumerate}

\end{proposition}
This will also be used without special reference. 

\paragraph{Sums over primes}
We shall systematically use the following estimates from~\cite{RS62}:
\begin{align}
\label{epix}
\sum_{p\le x}1&\le 1.26\frac x{\log x},\\
\label{ethetax}
\sum_{p\le x}\log p&\le 1.02 x.
\end{align}
See~\cite{RS62}, Corollary~1 of Theorem~2 for~(\ref{epix}) and  Theorem~9 for~(\ref{ethetax}).

\section{Auxiliary Material}

In this section we collect miscellaneous facts, mostly elementary and/or well-known, to be used in the article. 

\subsection{Integral Elements}

In this subsection~$R$ is an integrally closed integral domain and~$\K$ its quotient field.

\begin{lemma}
\label{ldisc}
Let~$\L$ be a finite separable extension of~$\K$ of degree~$n$ and~$\barr$ the integral closure of~$R$ in~$\L$.  Let  ${\omega_1,\ldots,\omega_n \in \barr}$ form a base of~$\L$ over~$\K$. 
We denote by~$\Delta$ the discriminant of this basis:
${\Delta=\left(\det\left[\sigma_i(\omega_j)\right]_{ij}\right)^2}$,
where ${\sigma_1,\ldots,\sigma_n:\L\hookrightarrow\bar\K}$ are the distinct embeddings of~$\L$ into~$\bar\K$.
Then 
${\barr \subset \Delta^{-1}(R\omega_1 + \cdots + R\omega_n)}$.
\end{lemma}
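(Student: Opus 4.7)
The plan is to use Cramer's rule to invert the change-of-basis matrix between $\omega_1,\ldots,\omega_n$ and the basis of conjugates, and then exploit the fact that $R$ is integrally closed to pass from integrality over $R$ to membership in $R$.

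Concretely, take any ${\alpha \in \barr}$ and write ${\alpha = \sum_{j=1}^n c_j \omega_j}$ with ${c_j \in \K}$, which is possible since the $\omega_j$ form a $\K$-basis of~$\L$. Applying the embeddings ${\sigma_1,\ldots,\sigma_n}$ yields the linear system
$$
\sigma_i(\alpha) = \sum_{j=1}^n \sigma_i(\omega_j)\, c_j \qquad (i=1,\ldots,n).
$$
Let ${D = \det[\sigma_i(\omega_j)]_{ij}}$, so ${\Delta = D^2}$, and let~$D_j$ denote the determinant obtained from $D$ by replacing its $j$-th column with the column vector ${\bigl(\sigma_i(\alpha)\bigr)_i}$. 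Separability and linear independence of the $\omega_j$ over $\K$ force ${D\ne 0}$, so Cramer's rule gives ${c_j = D_j/D}$, whence
$$
\Delta c_j = D \cdot D_j.
$$

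Now the key observation: each $\sigma_i(\omega_j)$ and each $\sigma_i(\alpha)$ is integral over~$R$, because the ring of elements of $\bar\K$ integral over~$R$ is stable under all $\K$-embeddings, and both $\omega_j$ and $\alpha$ lie in~$\barr$. Since sums and products of elements integral over~$R$ are again integral over~$R$, both $D$ and $D_j$ are integral over~$R$, hence so is ${\Delta c_j = D \cdot D_j}$. But ${\Delta c_j \in \K}$ and~$R$ is integrally closed in~$\K$, so ${\Delta c_j \in R}$. Therefore ${\Delta \alpha = \sum_j (\Delta c_j)\omega_j \in R\omega_1 + \cdots + R\omega_n}$, which is the desired conclusion. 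There is no substantive obstacle here; the only point that needs a moment of care is the invariance of integrality under the embeddings $\sigma_i$, and the fact that ${D\ne 0}$, both of which are standard.
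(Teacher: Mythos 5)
Your proof is correct and follows essentially the same route as the paper: express ${\beta\in\barr}$ in the basis, apply the embeddings, solve by Cramer's rule, observe that ${\Delta a_j}$ is integral over~$R$, and use that~$R$ is integrally closed. You simply spell out the details (writing ${\Delta c_j = D\cdot D_j}$ and noting the stability of integrality under the embeddings) that the paper leaves implicit.
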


\begin{proof}
This is standard. Write ${\beta \in \barr}$ as ${\beta=a_1\omega_1+\cdots+a_n\omega_n}$ with ${a_i\in \K}$. Solving the system of linear equations 
$$
\sigma_i(\beta) = a_1\sigma_i(\omega_1)+\cdots+a_n\sigma_i(\omega_n)  \qquad (i=1, \ldots, n)
$$
using the Kramer rule, we find that the numbers $\Delta a_i$ are integral over~$R$. Since~$R$ is integrally closed, we have ${\Delta a_i\in R}$. \qed
\end{proof}

\begin{corollary}
\label{cint}
Let 
${f(T)=f_0T^n + f_1 T^{n-1} + \cdots + f_n \in R[T]}$ 
be a $\K$-irreducible polynomial, and  ${\alpha\in\bark}$ one of its roots. Let~$\barr$ be the integral closure of~$R$ in $\K(\alpha)$. Then 
${\barr \subset \Delta(f)^{-1}R[\alpha]}$.
where $\Delta(f)$ is the discriminant of~$f$.
\end{corollary}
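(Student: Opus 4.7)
The plan is to apply Lemma~\ref{ldisc} to a basis of $\L=\K(\alpha)$ over $\K$ lying in $\barr$ whose discriminant is exactly $\Delta(f)$, and then bound the resulting module by $\Delta(f)^{-1}R[\alpha]$. The naive choice $1,\alpha,\ldots,\alpha^{n-1}$ is unsuitable because $\alpha$ is not integral over $R$ when $f_0\ne 1$. Instead, following a classical construction going back to Dedekind, I would take
$$
\Omega_1=1,\qquad \Omega_k=f_0\alpha^{k-1}+f_1\alpha^{k-2}+\cdots+f_{k-1}\quad(k=2,\ldots,n),
$$
that is, for $k\ge 2$, the coefficients of $f(T)/(T-\alpha)$ regarded as a polynomial in $T$.

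The first step is to verify that each $\Omega_k$ lies in $\barr$. Writing $h_k(T)=f_0T^{k-1}+\cdots+f_{k-1}$ and $\tilde h_k(T)=f_kT^{n-k}+\cdots+f_n$, the polynomial division $f(T)=h_k(T)\,T^{n-k+1}+\tilde h_k(T)$ rewrites as
$$
f(T)+(U-h_k(T))\,T^{n-k+1}=UT^{n-k+1}+\tilde h_k(T).
$$
Row-reducing the Sylvester matrix accordingly shows that $\Res_T(f,\,U-h_k)$ equals, up to sign, $f_0^{k-1}$ times $\Res_T(UT^{n-k+1}+\tilde h_k,\,U-h_k)$. Since the latter resultant manifestly lies in $R[U]$, dividing by $f_0^{k-1}$ yields the characteristic polynomial of $\Omega_k$---monic in $U$ of degree~$n$---as an element of $R[U]$, exhibiting $\Omega_k$ as integral over $R$.

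The remaining steps are routine. The change-of-basis matrix from $(1,\alpha,\ldots,\alpha^{n-1})$ to $(\Omega_1,\ldots,\Omega_n)$ is lower triangular with diagonal $(1,f_0,f_0,\ldots,f_0)$, hence of determinant $f_0^{n-1}$; combined with the Vandermonde computation $\disc(1,\alpha,\ldots,\alpha^{n-1})=\Delta(f)/f_0^{2(n-1)}$, this gives that the discriminant of $(\Omega_1,\ldots,\Omega_n)$ is exactly $\Delta(f)$. Lemma~\ref{ldisc} then delivers $\barr\subset\Delta(f)^{-1}(R\Omega_1+\cdots+R\Omega_n)\subset\Delta(f)^{-1}R[\alpha]$, as required. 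The main obstacle is the integrality of the $\Omega_k$ for $k\ge 2$: as elements of $\L$ they are $R$-linear combinations of the powers $\alpha^j$, which individually are \emph{not} integral when $f$ is not monic, so integrality has to come from a cancellation between the terms, witnessed cleanly by the resultant identity above.
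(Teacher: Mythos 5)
Your proof is correct and follows essentially the same route as the paper: it applies Lemma~\ref{ldisc} to the classical basis $1, f_0\alpha, f_0\alpha^2+f_1\alpha,\ldots$ (yours differs from the paper's only by the constants $f_{k-1}\in R$, which is immaterial), whose discriminant is $\Delta(f)$. The only difference is that you prove the integrality of these elements via the resultant identity, whereas the paper simply cites Schmidt for this well-known fact.
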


\begin{proof}
It is well-known that the quantities
\begin{align*}
 \omega_1=1, \quad
 \omega_2=f_0\alpha, \quad
 \omega_3=f_0\alpha^2 + f_1 \alpha, \quad \ldots\quad \quad
\omega_n= f_0\alpha^{n-1}+f_1 \alpha^{n-2} + \cdots + f_{n-2}\alpha
\end{align*}
are integral over~$R$; see, for example, \cite[page~183]{Sc91}.
Applying Lemma~\ref{ldisc} to the basis $\omega_1,\ldots,\omega_n$, we  complete the proof. \qed
\end{proof}

\subsection{Local Lemmas}
\label{ssloc}
In this subsection~$\K$ is a field of characteristic~$0$  supplied with a discrete valuation~$v$. We denote by~$\OO_v$ the local ring of~$v$.

The proof of the following lemma is a simple exercise  left to the reader.

\begin{lemma}
\label{lramm}
Assume that~$K$ is complete, and let~$\pi$ be a primitive element of~$\K$. 
\begin{enumerate}
\item
\label{irad}
Let ${\alpha\in \K^\times}$. For a positive integer~$e$  not divisible by the characteristic of the residue field and any choice of the root ${\alpha^{1/e}}$ the ramification index of ${\K(\alpha^{1/e})/\K}$ is ${e/\gcd(e,\ord_\pi\alpha)}$.

\item
\label{icompos}
Let~$\L_1$ and~$\L_2$ be finite extensions of~$\K$ (inside some algebraic closure of~$\K$) of ramification~$e_1$ and~$e_2$ respectively. Assume that none of~$e_1$,~$e_2$ is divisible by the characteristic of the residue field. Then the ramification of $\L_1\L_2/\K$ is $\mathrm{lcm}(e_1,e_2)$. \qed

\end{enumerate}
\end{lemma}

We say that a polynomial ${F(X)\in \K[X]}$ is \emph{$v$-monic} if its leading coefficient is a $v$-adic unit\footnote{We say that~$\alpha$ is a $v$-adic unit if ${|\alpha|_v=1}$.}. 
\begin{lemma}
\label{lram}
Let ${F(X)\in \OO_v[X]}$ be a $v$-monic polynomial,  and let ${\eta\in \bark}$ be a root of~$F$. (We do not assume~$F$ to be the minimal polynomial of~$\eta$ over~$\K$, because we do not assume it $\K$-irreducible.) 
Then ${|F'(\eta)|_v<1}$ for any extension of~$v$ to $\K(\eta)$ ramified over~$\K$. 
\end{lemma}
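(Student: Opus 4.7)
The plan is to reduce $F'(\eta)$ to a product involving the derivative of the minimal polynomial of $\eta$ over $\K$, and then apply Dedekind's classical relation between the different ideal and $g'(\eta)$.

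First, since $F$ is $v$-monic, I divide by the leading coefficient (a $v$-adic unit) and assume $F$ is monic in $\OO_v[X]$; then $\eta$ is integral over $\OO_v$. Let $g\in \K[X]$ denote the minimal polynomial of $\eta$ over $\K$. Because the DVR $\OO_v$ is integrally closed in $\K$, the coefficients of $g$ lie in $\OO_v$, and $g$ is monic. Since $g\mid F$ in $\K[X]$ with $g$ monic in $\OO_v[X]$, polynomial long division stays in $\OO_v[X]$ and produces $F=g\cdot h$ with $h\in \OO_v[X]$.

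Differentiating and using $g(\eta)=0$ gives $F'(\eta)=g'(\eta)h(\eta)$. For any extension of $v$ to $\K(\eta)$, the element $\eta$ lies in the corresponding valuation ring by integrality, so $h(\eta)$ does too, giving $|h(\eta)|_v\le 1$. Hence the problem reduces to showing $|g'(\eta)|_v<1$.

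For this last step I invoke Dedekind's theorem: the different $\gerd_{\K(\eta)/\K}$ divides the principal ideal $\bigl(g'(\eta)\bigr)$ in the integral closure of $\OO_v$ in $\K(\eta)$, and its support is exactly the set of primes ramifying over $v$, with local exponent at least $1$ at each such prime. The hypothesis that $v$ ramifies in $\K(\eta)$ therefore forces $v\bigl(g'(\eta)\bigr)\ge 1$ at any ramified extension, whence $|g'(\eta)|_v<1$. The only genuinely non-trivial ingredient is the appeal to Dedekind's different theorem; the reductions in the first two paragraphs are purely formal, and the parenthetical ``for any extension of $v$'' is best read in the local setup where $v$ has a unique extension to $\K(\eta)$ so that no ambiguity arises.
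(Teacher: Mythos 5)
Your proof is correct and follows essentially the same route as the paper's: reduce to the minimal polynomial $G$ of $\eta$ by writing $F=GH$ with $H\in\OO_v[X]$ (the paper invokes the Gauss lemma where you use monic long division, an equivalent step), bound $|H(\eta)|_v\le 1$ by integrality, and conclude $|G'(\eta)|_v<1$ from the fact that the different $\gerd_{\K(\eta)/\K}$ divides $G'(\eta)$ and is non-trivial when $v$ ramifies. Your remark that the statement is best read after passing to the $v$-adic completion matches the paper's opening reduction to the $v$-complete case.
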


\paragraph{Proof}
Fix an extension of~$v$ to $\K(\eta)$ ramified over~$\K$. Replacing~$\K$ by~$\K_v$ and $\K(\eta)$ by $\K(\eta)_v$, we 
may assume that~$\K$ is $v$-complete. Let ${\gerd=\gerd_{\K(\eta)/\K}}$ be the different of the extension $\K(\eta)/\K$. Since~$v$ ramifies in $\K(\eta)$, the different is a non-trivial ideal of~$\OO_v$.  

Since~$\eta$ is a root of a $v$-monic polynomial, it is integral over~$\OO_v$. Let ${G(X)\in \OO_v[X]}$ be the minimal polynomial of~$\eta$. Then the different~$\gerd$  divides $G'(\eta)$, which  implies that ${|G'(\eta)|_v<1}$. 

Write ${F(X)=G(X)H(X)}$. By the Gauss lemma, ${H(X)\in \OO_v[X]}$. Since ${F'(\eta)=G'(\eta)H(\eta)}$, we obtain 
${|F'(\eta)|_v\le|G'(\eta)|_v<1}$, as wanted. \qed

\bigskip

Given a polynomial $F(X)$ over some field of characteristic~$0$, we define by $\hatf(X)$ the \textsl{radical} of~$F$, that is, the  separable polynomial, having the same roots  and the same leading coefficient as~$F$:
$$
\hatf(X)=f_0\prod_{F(\alpha)=0}(X-\alpha),
$$
where~$f_0$ is the leading coefficient of~$F$ and the product runs over the \textbf{distinct} roots of~$F$ (in an algebraic closure of the base field).

\begin{lemma}
\label{lfcap}
Assume that ${F(X)\in \OO_v[X]}$. Then the radical $\hatf(X)$  is in $\OO_v[X]$ as well. Also, if ${|F(\xi)|_v<1}$ for some ${\xi\in \OO_v}$, then we have ${|\hatf(\xi)|_v<1}$ as well. 
\end{lemma}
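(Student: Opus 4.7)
The plan is to use unique factorization in $\OO_v[X]$. Since $\OO_v$ is a discrete valuation ring (hence a UFD), $\OO_v[X]$ is a UFD as well; by Gauss's lemma its irreducibles are, up to units, a uniformizer~$\pi$ and the primitive polynomials of positive degree in $\OO_v[X]$ that are $\K$-irreducible. Factoring~$F$ accordingly yields
$$
F=u\pi^a\prod_j Q_j^{e_j},
$$
with ${u\in\OO_v^\times}$, ${a\ge 0}$, and the ${Q_j\in \OO_v[X]}$ pairwise non-associate primitive polynomials, each irreducible over~$\K$, with ${e_j\ge 1}$.

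Writing~$\ell_j$ for the leading coefficient of~$Q_j$, the leading coefficient of~$F$ is ${f_0=u\pi^a\prod_j\ell_j^{e_j}}$, and the distinct roots of~$F$ in $\bark$ are precisely the roots of $\prod_j Q_j$. Matching leading coefficients in the definition of $\hatf$ then yields the key identity
$$
\hatf=u\pi^a\Bigl(\prod_j\ell_j^{e_j-1}\Bigr)\prod_j Q_j.
$$
Since ${e_j-1\ge 0}$ and both ${\ell_j\in \OO_v}$ and ${Q_j\in \OO_v[X]}$, this immediately proves that ${\hatf\in \OO_v[X]}$.

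For the second assertion, I would evaluate both~$F$ and~$\hatf$ at~$\xi$. Since ${Q_j\in \OO_v[X]}$ and ${|\xi|_v\le 1}$, we have ${|Q_j(\xi)|_v\le 1}$, and clearly ${|\ell_j|_v\le 1}$. The identity ${F(\xi)=u\pi^a\prod_j Q_j(\xi)^{e_j}}$ then shows that if ${|F(\xi)|_v<1}$, then either ${a\ge 1}$, or ${|Q_{j^\ast}(\xi)|_v<1}$ for some~$j^\ast$. In either case the analogous formula for $\hatf(\xi)$ immediately yields ${|\hatf(\xi)|_v<1}$, since the ``small'' factor is still present there, the remaining factors being of absolute value at most~$1$.

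The argument is essentially formal bookkeeping, with no serious obstacle; the only point that requires a bit of care is the explicit formula for $\hatf$ in terms of the $\OO_v[X]$-factorization of~$F$, which must simultaneously account for the leading coefficient and the multiset of roots.
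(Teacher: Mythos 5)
Your proof is correct and follows essentially the same route as the paper's: factor~$F$ into its $\K$-irreducible factors chosen, via Gauss's lemma, inside $\OO_v[X]$, and compare leading coefficients to see that $\hatf$ is their product times an element of~$\OO_v$. Your explicit $u\pi^a$-bookkeeping in the UFD $\OO_v[X]$ is in fact a slightly more careful rendering, since it also covers the case where ${|F(\xi)|_v<1}$ comes from the content of~$F$ (the factor $\pi^a$) rather than from some ${|Q_j(\xi)|_v<1}$, a case the paper's terse ``obvious'' second step passes over silently.
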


\begin{proof}
Let 
${F(X)=P_1(X)^{\alpha_1} \cdots P_k(X)^{\alpha_k}}$
be the irreducible factorization of~$F$ in ${\K[X]}$. The Gauss Lemma implies that we can choose ${P_i(X) \in \OO_v[X]}$ for ${i=1,\ldots,k}$.
Since the characteristic of~$\K$ is~$0$, every~$P_i$ is separable. Obviously, the leading coefficient of the separable polynomial ${P_1(X)\cdots P_k(X)}$ divides that of~$F(X)$ in the ring~$\OO_v$. Hence
${\hatf(X)=\gamma P_1(X)\cdots P_k(X)}$
with some ${\gamma \in \OO_v}$, which proves the first part of the lemma. 
The second part is obvious: if ${|F(\xi)|_v<1}$ then ${|P_i(\xi)|_v<1}$ for some~$i$, which implies ${|\hatf(\xi)|_v<1}$. \qed
\end{proof}

\begin{lemma}
\label{lhens}
Let ${F(X)\in \OO_v[X]}$ and  ${\xi\in \OO_v}$ satisfy
${|F(\xi)|_v<1}$ and  ${|F'(\xi)|_v=1}$. Let~$\barv$ be an extension of~$v$ to~$\bark$. Then there exists exactly one root ${\alpha\in \bark}$ of~$F$ such that  ${|\xi-\alpha|_\barv<1}$. 
\end{lemma}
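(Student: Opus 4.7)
This is Hensel's lemma in its classical form, and the natural plan is to establish existence by Newton's iteration and uniqueness by a Taylor-expansion argument. The single technical observation I will lean on throughout is that for $\eta\in\OO_\barv$ and $F\in\OO_v[X]$ the shifted polynomial $F(X+\eta)$ lies in $\OO_\barv[X]$, so every $F^{(k)}(\eta)/k!$ is a $\barv$-adic integer.

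For existence, I would first pass to the $v$-adic completion $\hat\K_v$, where $\barv$ extends uniquely to the algebraic closure and Cauchy sequences have limits. The Newton iteration $\xi_{n+1}=\xi_n-F(\xi_n)/F'(\xi_n)$ is then set in motion: starting from $|F(\xi_0)|_v<1$ and $|F'(\xi_0)|_v=1$, an induction using the Taylor expansion of $F$ around $\xi_n$ shows that $\xi_n$ stays in $\OO_v$, that $|F'(\xi_n)|_v$ remains equal to $1$, and that $|F(\xi_{n+1})|_v\le|F(\xi_n)|_v^2$. Quadratic convergence yields a limit $\alpha$, necessarily a root of $F$, with $|\xi-\alpha|_\barv\le|F(\xi)|_v<1$; since $F\in\K[X]$, the root $\alpha$ lies in $\bark$.

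For uniqueness, suppose $\alpha'\in\bark$ is a second root with $|\xi-\alpha'|_\barv<1$, so that $|\alpha-\alpha'|_\barv<1$ by the strong triangle inequality. First, $|F'(\alpha)|_\barv=1$: expanding $F'$ around $\xi$ and using that each coefficient $F^{(j+1)}(\xi)/j!=(j+1)\cdot F^{(j+1)}(\xi)/(j+1)!$ lies in $\OO_v$, one obtains $|F'(\alpha)-F'(\xi)|_\barv<1$. Now expand $F$ around $\alpha$ and substitute $X=\alpha'$: the identity
$$0=F'(\alpha)(\alpha'-\alpha)+\sum_{k\ge2}\tfrac{F^{(k)}(\alpha)}{k!}(\alpha'-\alpha)^k$$
forces $|\alpha'-\alpha|_\barv\le|\alpha'-\alpha|_\barv^2$, which, combined with $|\alpha'-\alpha|_\barv<1$, can only hold if $\alpha'=\alpha$.

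The argument is conceptually routine; the only places that need care are keeping the Taylor coefficients $F^{(k)}(\cdot)/k!$ $\barv$-integral (so that higher-order terms may be dropped) and verifying that $|F'|$ remains a unit both along the Newton iteration and upon switching base point from $\xi$ to $\alpha$. No hypothesis beyond $F\in\OO_v[X]$, $|F(\xi)|_v<1$, and $|F'(\xi)|_v=1$ is used; in particular $F$ need not be $v$-monic.
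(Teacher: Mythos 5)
Your proof is correct. The paper disposes of this lemma in three lines: it extends $\K$ so that it contains all roots of $F$, passes to the $v$-adic completion, and simply cites the classical Hensel lemma for existence and uniqueness there, noting that the root so obtained must already lie in the (extended) field $\K\subset\bark$. You instead prove the Henselian statement from scratch: Newton iteration in the completion for existence (with the standard induction keeping $|F'(\xi_n)|_v=1$ and $|F(\xi_{n+1})|_v\le|F(\xi_n)|_v^2$), and a Taylor-expansion argument around the constructed root for uniqueness, after first checking $|F'(\alpha)|_\barv=1$. Both steps are sound; the key points you flag --- integrality of the coefficients $F^{(k)}(\eta)/k!$ for $\eta$ integral, and the transfer of the unit condition on $F'$ from $\xi$ to $\alpha$ --- are exactly what makes the ultrametric estimates work, and your observation that $v$-monicity is not needed is consistent with the paper, which does not assume it in this lemma either. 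What your route buys is self-containedness (no black-box citation and no need for the paper's trick of enlarging $\K$ to contain all roots), at the cost of length; the paper's route is shorter but delegates the uniqueness entirely to the cited Hensel lemma. One point to state a bit more carefully, though both you and the paper gloss it: the Newton iteration and the uniqueness computation should be carried out inside the $\barv$-adic completion of $\bark$ (which contains the closure of $\K$), so that the limit $\alpha$ and any competing root $\alpha'$ live in one field where $\barv$ makes sense; since $F$ splits over $\bark$, all its roots in that completion already lie in $\bark$, which is the clean justification for your final identification $\alpha\in\bark$.
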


\begin{proof}
This is a consequence of Hensel's lemma. Extending~$\K$, we may assume that it contains all the roots of~$F$. Hensel's lemma implies that there is exactly one root~$\alpha$ in  the $v$-adic completion of~$\K$ with the required property. This root must belong to~$\K$. \qed
\end{proof}

\begin{lemma}
\label{loneoverf}
Let ${F(X), G(X)\in \OO_v[X]}$ and  ${\alpha, \xi\in \OO_v}$ satisfy 
$$
F(X)=(X-\alpha)^m G(X),\qquad G(\alpha) \ne 0, \qquad |\xi-\alpha|_v<|G(\alpha)|_v
$$
with some non-negative integer~$m$. Expand the rational function ${F(X)^{-1}}$ into the Laurent series at~$\alpha$. Then this series converges at ${X=\xi}$. 
\end{lemma}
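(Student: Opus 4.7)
The plan is to factor $F(X)^{-1}=(X-\alpha)^{-m}\cdot G(X)^{-1}$. The prefactor $(X-\alpha)^{-m}$ only contributes the purely singular part of the Laurent expansion, consisting of finitely many terms $c_{-m}(X-\alpha)^{-m}+\cdots+c_{-1}(X-\alpha)^{-1}$, and evaluating these at $\xi$ poses no convergence issue since $\xi\ne\alpha$. Thus the lemma reduces to showing that the Taylor expansion of $G(X)^{-1}$ at $\alpha$ converges at $X=\xi$.

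To do this, I would first Taylor-expand $G$ at $\alpha$: write $G(X)=\sum_{k=0}^{d}a_k(X-\alpha)^k$, where $d=\deg G$. Since $G\in\OO_v[X]$ and $\alpha\in\OO_v$, the substitution formula gives $a_k\in\OO_v$ for every $k$, and $a_0=G(\alpha)\ne 0$. Now let $G(X)^{-1}=\sum_{\ell\ge 0}b_\ell(X-\alpha)^\ell$ be the formal power series inverse. Comparing coefficients in the identity $G(X)\cdot G(X)^{-1}=1$ yields
$$
a_0b_0=1,\qquad a_0b_\ell=-\sum_{k=1}^{\ell}a_kb_{\ell-k}\quad(\ell\ge 1).
$$
An easy induction on $\ell$, using $|a_k|_v\le 1$ and the non-archimedean triangle inequality, gives the uniform bound $|b_\ell|_v\le|a_0|_v^{-\ell-1}$.

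Setting $t=\xi-\alpha$, the hypothesis $|t|_v<|G(\alpha)|_v=|a_0|_v$ gives $|t/a_0|_v<1$; also $t\in\OO_v$, so $|t|_v\le 1$. Hence
$$
|b_\ell t^\ell|_v\;\le\;|a_0|_v^{-1}\bigl(|t|_v/|a_0|_v\bigr)^{\ell}\;\longrightarrow\;0\qquad(\ell\to\infty),
$$
so the series $\sum_{\ell\ge 0}b_\ell t^\ell$ converges $v$-adically. Combined with the observation of the first paragraph, this proves that the Laurent expansion of $F(X)^{-1}$ at $\alpha$ converges at $X=\xi$.

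There is no real obstacle in this argument: the entire proof is routine non-archimedean bookkeeping. The only point that must be handled with minor care is the inductive estimate on $|b_\ell|_v$, where one must keep track of the exact power of $|a_0|_v$ that appears in the denominator, since the bound $|b_\ell|_v\le|a_0|_v^{-\ell-1}$ is exactly what is needed to cancel the growth of $|t|_v^\ell$ under the assumption $|t|_v<|a_0|_v$.
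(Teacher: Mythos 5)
Your proof is correct and is essentially the argument the paper has in mind: the paper simply normalizes $\alpha=0$ by the substitution ${X\mapsto\alpha+X}$ and declares the statement obvious, and your coefficient bound ${|b_\ell|_v\le|a_0|_v^{-\ell-1}}$ together with ${|\xi-\alpha|_v<|G(\alpha)|_v}$ is exactly the routine verification behind that ``obvious''.
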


\begin{proof}
Substituting ${X\mapsto\alpha+X}$, we may assume ${\alpha=0}$, in which case the statement becomes obvious. \qed
\end{proof}

\subsection{Heights}

Recall that, for a polynomial~$f$ with algebraic coefficients, we denote by  ${\pheight(f)}$ and by ${\aheight(f)}$, respectively, the projective height and the affine height of the vector of its coefficients ordered somehow. More generally, the height ${\aheight(f_1, \ldots, f_s)}$ of a finite system of polynomials is, by definition, the affine height of the vector formed of all the non-zero coefficients of all these polynomials.

\begin{lemma}
\label{lsombrapol}
Let ${f_1,\ldots,f_s}$ be polynomials in ${\bar\Q[X_1,\ldots,X_r]}$ and put 
$$
\textstyle
N=\max\{\deg f_1, \ldots, \deg f_s\}, \qquad 
h=\aheight(f_1, \ldots,f_s).
$$ 
Let also~$g$ be a polynomial in $\bar\Q[Y_1,\ldots,Y_s]$. Then
\begin{enumerate}
\item 
\label{iprod}
$\aheight\left(\prod_{i=1}^s f_i\right)\le \sum_{i=1}^s \aheight\left(f_i\right) + \log (r+1)\sum_{i=1}^{s-1}\deg f_i$,
\item
\label{iprodinv}
$\pheight\left(\prod_{i=1}^s f_i\right)\ge \sum_{i=1}^s \pheight\left(f_i\right) - \sum_{i=1}^s\deg f_i$,
\item 
\label{ig}
$\aheight\bigl(g\left(f_1,\ldots,f_s\right)\bigr) \le \aheight(g) + \bigl(h+\log(s+1)+N\log(r+1)\bigr) \deg g$. 
\end{enumerate}
\end{lemma}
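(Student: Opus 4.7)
The three estimates are standard multiplicativity properties of heights, and the natural approach is a place-by-place analysis, using Gauss's lemma at non-archimedean places and triangle / Mahler--Gelfond type inequalities at archimedean ones.

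For part~(\ref{iprod}), the crucial local bound is that at archimedean $v$ each coefficient of $fg$ is a sum of at most $\binom{\deg f+r}{r}\le(r+1)^{\deg f}$ products of coefficients of $f$ and $g$, so $\|fg\|_v\le(r+1)^{\min(\deg f,\deg g)}\|f\|_v\|g\|_v$; at non-archimedean $v$, Gauss's lemma gives $\|fg\|_v\le\|f\|_v\|g\|_v$. Summing the logarithms weighted by $[\K_v:\Q_v]/[\K:\Q]$ and using that the archimedean weights sum to $1$ yields
$\aheight(fg)\le\aheight(f)+\aheight(g)+\log(r+1)\min(\deg f,\deg g)$.
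Iterating on $s$ with a suitable ordering (relabel so that $f_s$ has the largest degree and is multiplied last, at each previous step absorbing the smaller factor) gives the stated estimate.

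For part~(\ref{iprodinv}) the direction is reversed. Non-archimedean places contribute the exact identity $\|fg\|_v=\|f\|_v\|g\|_v$ by Gauss's lemma, so all the content sits at archimedean places, where one needs a Mahler--Gelfond inequality $\|f\|_v\|g\|_v\le 2^{\deg f+\deg g}\|fg\|_v$. In several variables this follows from the multiplicativity $M(fg)=M(f)M(g)$ of the Mahler measure $M$ together with the standard two-sided comparison between $M$ and the sup norm of coefficients. Summation yields $\pheight(fg)\ge\pheight(f)+\pheight(g)-(\log 2)(\deg f+\deg g)$, and since $\log 2<1$ an induction on $s$ delivers the claim. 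I expect this to be the main obstacle: (\ref{iprod}) and (\ref{ig}) are essentially bookkeeping around Gauss's lemma and the archimedean triangle inequality, whereas the reverse estimate requires multi-variable Mahler measure theory, which is less elementary.

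For part~(\ref{ig}) I would expand $g(f_1,\ldots,f_s)=\sum_\beta c_\beta f_1^{\beta_1}\cdots f_s^{\beta_s}$ as a sum of at most $\binom{\deg g+s}{s}\le(s+1)^{\deg g}$ monomials. At each place $v$, iterating the coefficient-counting from part~(\ref{iprod}) gives $\|f^\beta\|_v\le c_v^{N\deg g}\prod_i\|f_i\|_v^{\beta_i}$ with $c_v=r+1$ (resp.\ $1$) at archimedean (resp.\ non-archimedean) places. Since $\sum_i\beta_i\le\deg g$, the product is bounded by $\max(1,\max_i\|f_i\|_v)^{\deg g}$, whose $\log^+$ summed over all places with the usual weights is exactly $\deg g\cdot h$ by the very definition of $h=\aheight(f_1,\ldots,f_s)$. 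Multiplying by $|c_\beta|_v\le\|g\|_v$, taking the maximum over $\beta$, and summing the $(s+1)^{\deg g}$ monomials via the archimedean triangle inequality (which contributes a further $\deg g\log(s+1)$) yields the claimed bound $\aheight(g(f_1,\ldots,f_s))\le\aheight(g)+\deg g\,(h+\log(s+1)+N\log(r+1))$.
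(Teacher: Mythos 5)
Your overall strategy is sound, but note first that the paper does not actually prove this lemma: item~\ref{iprodinv} is quoted as Gelfond's inequality (Proposition~B.7.3 of~\cite{HS00}) and items~\ref{iprod} and~\ref{ig} as immediate consequences of Lemma~1.2 of~\cite{KPS01}. So any self-contained place-by-place proof is a different route from the paper's, and for item~\ref{ig} yours is correct and is essentially the argument underlying the quoted lemma: the count $(s+1)^{\deg g}$ of monomials, the per-monomial bound $(r+1)^{N\deg g}\prod_i\|f_i\|_v^{\beta_i}$, and the observation that $\max(1,\max_i\|f_i\|_v)^{\deg g}$ sums to $\deg g\cdot h$ give exactly the stated estimate.

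There are, however, two steps that fail as written. For item~\ref{iprod}, your local bound and the two-factor inequality $\aheight(fg)\le\aheight(f)+\aheight(g)+\log(r+1)\min(\deg f,\deg g)$ are fine, but iterating with the \emph{largest} factor multiplied \emph{last} overshoots: with degrees $1,1,5$ the successive contributions are $\min(1,1)+\min(2,5)=3$, exceeding the allowed $\deg f_1+\deg f_2=2$. Either multiply the largest factor first (then each step contributes at most the degree of the newly added, non-maximal factor), or argue in one shot: a coefficient of $f_1\cdots f_s$ is a sum of at most $(r+1)^{\deg f_1+\cdots+\deg f_{s-1}}$ products of coefficients, since choosing monomials from $f_1,\ldots,f_{s-1}$ determines the one from $f_s$. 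For item~\ref{iprodinv} the problem is more serious: iterating the two-factor bound $\pheight(fg)\ge\pheight(f)+\pheight(g)-\log 2\,(\deg f+\deg g)$ recounts the degrees of the partial products, so for three factors of common degree $d$ you lose $5d\log 2>3d$, and no induction on $s$ can recover the stated loss $\sum_i\deg f_i$. The correct argument treats all $s$ factors simultaneously: compare the sup norm of each $f_i$ with its (multivariable) Mahler measure, use $M(f_1\cdots f_s)=M(f_1)\cdots M(f_s)$ once, and compare back with the sup norm of the product — this one-shot version is precisely how the Gelfond inequality cited by the paper is proved, and the multiplicativity you already invoke makes the repair straightforward.
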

Notice that we use the projective height in item~\ref{iprodinv}, and the affine height in the other items.

\begin{proof}	
Item~\ref{iprodinv} is the famous Gelfond inequality, see, for instance, Proposition~B.7.3 in~\cite{HS00}. The rest is an immediate consequence of Lemma~1.2 from~\cite{KPS01}. \qed
\end{proof}

\begin{remark}
\label{rts}
If in item~\ref{ig}  we make substitution ${Y_i=f_i}$ only for a part of the indeterminates~$Y_i$, say, for~$t$ of them, where ${t\le s}$, then we may replace ${\log (s+1)}$ by ${\log(t+1)}$, and $\deg g$ by the degree with respect to these indeterminates:
$$
\aheight\bigl(g\left(f_1,\ldots,f_t, Y_{t+1},\ldots, Y_s\right)\bigr) \le \aheight(g) + \bigl(h+\log(t+1)+N\log(r+1)\bigr) \deg_{Y_1, \ldots, Y_t} g.
$$
\end{remark}

\begin{remark}
\label{rheiroots}
When all the~$f_i$ are just linear polynomials in one variable, item~\ref{iprodinv} can be refined as follows:
let ${F(X)}$ be a polynomial of degree~$\rho$, and ${\beta_1,\ldots,\beta_\rho}$ are its roots (counted with multiplicities); then
$$
\aheight(\beta_1)+\cdots+\aheight(\beta_\rho) \leq \pheight(F)+\log(\rho+1).
$$
This is a classical result of Mahler, see, for instance, \cite[Lemma 3]{Sc90}. 
\end{remark}

\begin{corollary}
\label{cdivi}
Let~$f$ and~$g$ be polynomials with algebraic coefficients such that~$f$ divides~$g$. Let also~$a$ be a non-zero coefficient of~$f$. Then 
\begin{enumerate}
\item\label{ihp}
$\pheight(f)\le \pheight(g)+\deg g$,
\item
\label{iha}
$\aheight(f)\le \pheight(g)+\aheight(a)+\deg g$.
\end{enumerate}
\end{corollary}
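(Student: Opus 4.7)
\textbf{Proof plan for Corollary \ref{cdivi}.}
The plan is to apply the Gelfond inequality (item~\ref{iprodinv} of Lemma~\ref{lsombrapol}) to the factorization ${g=fh}$, and then pass from the projective to the affine height using the distinguished coefficient~$a$.

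First I would write ${g=fh}$ with some polynomial~$h$ over the same algebraic extension. Item~\ref{iprodinv} of Lemma~\ref{lsombrapol} gives
$$
\pheight(g)=\pheight(fh)\ge \pheight(f)+\pheight(h)-\deg f-\deg h.
$$
Since projective heights of non-zero vectors are non-negative (a direct consequence of the product formula applied to any non-zero coordinate), one has ${\pheight(h)\ge 0}$, and since polynomial degrees add under multiplication, ${\deg f+\deg h=\deg g}$. Rearranging yields item~\ref{ihp}: ${\pheight(f)\le \pheight(g)+\deg g}$.

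For item~\ref{iha} I would exploit that the polynomial ${\tilf:=a^{-1}f}$ has the coefficient~$1$ among its coefficients, so that ${\|\tilf\|_v\ge 1}$ at every place~$v$, whence ${\aheight(\tilf)=\pheight(\tilf)=\pheight(f)}$ (projective height being scale-invariant). Writing ${f=a\tilf}$ one has ${\|f\|_v=|a|_v\|\tilf\|_v}$ at each place, and the elementary inequality ${\log^+(xy)\le \log^+ x+\log^+ y}$ (valid for ${x,y\ge 0}$) summed over all places gives
$$
\aheight(f)\le \aheight(a)+\aheight(\tilf)=\aheight(a)+\pheight(f).
$$
Substituting the bound from item~\ref{ihp} then produces item~\ref{iha}.

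I do not anticipate any serious obstacle here: the only non-routine ingredient is the Gelfond inequality, which is quoted ready-made in Lemma~\ref{lsombrapol}. The mild subtlety to watch is that the scaled polynomial ${\tilf=a^{-1}f}$ really has an affine height equal to its projective height, which requires observing that one of its coordinates is~$1$ and hence that ${\|\tilf\|_v\ge 1}$ everywhere; this is what makes the passage from $\pheight$ to $\aheight$ cost only~$\aheight(a)$.
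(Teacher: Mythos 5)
Your proposal is correct and coincides with the paper's own argument: item (1) is obtained from the Gelfond inequality applied to the factorization ${g=fh}$, and item (2) from the observation that ${f/a}$ has a coefficient equal to~$1$, so that ${\aheight(f/a)=\pheight(f/a)=\pheight(f)}$ and ${\aheight(f)\le\aheight(a)+\aheight(f/a)}$. You merely spell out the intermediate steps (non-negativity of ${\pheight(h)}$, the place-by-place inequality for ${\log^+}$) that the paper leaves implicit.
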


\begin{proof}
Item~\ref{ihp}  is a direct consequence of item~\ref{iprodinv} of Lemma~\ref{lsombrapol}.
For item~\ref{iha} remark that one of the coefficients of~$f/a$  is~$1$, which implies that
$$
\aheight(f/a)=\pheight(f/a)= \pheight(f)\le \pheight(g)+\deg g.
$$ 
Since
${\aheight(f)\le\aheight(a)+\aheight(f/a)}$, the result follows.\qed
\end{proof}

\begin{corollary}
\label{cfalpha}
Let~$\alpha$ be an algebraic number and ${f\in\barq[X,Y]}$ be a polynomial with algebraic coefficients, let also ${f^{(\alpha)}(X,Y)=f(X+\alpha,Y)}$ then
$$
\aheight(f^{(\alpha)}) \leq \aheight(f)+m\aheight(\alpha)+2m\log2,
$$
where ${m=\deg_X f}$. 
\end{corollary}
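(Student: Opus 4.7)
The plan is to derive the bound as an essentially mechanical application of Remark~\ref{rts}. View the polynomial~$f$ as an element $f(Z,Y)\in\barq[Z,Y]$ of the polynomial ring in two indeterminates $Z,Y$. Then ${f^{(\alpha)}(X,Y)=f(X+\alpha,Y)}$ is obtained from $f(Z,Y)$ by substituting the polynomial ${f_1(X)=X+\alpha\in\barq[X]}$ for~$Z$ and leaving~$Y$ untouched.

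In the notation of Remark~\ref{rts}, this corresponds to ${s=2}$, ${t=1}$, ${r=1}$, with ${N=\deg(X+\alpha)=1}$. The relevant height parameter is ${h=\aheight(f_1)=\aheight(X+\alpha)}$; and since the affine height of the coefficient vector $(1,\alpha)$ is just ${\max\{0,\aheight(\alpha)\}=\aheight(\alpha)}$, we have ${h=\aheight(\alpha)}$. Finally, the degree with respect to the substituted variable is ${\deg_Z f = \deg_X f = m}$.

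Plugging these into the inequality supplied by Remark~\ref{rts} yields
$$
\aheight(f^{(\alpha)}) \leq \aheight(f)+\bigl(\aheight(\alpha)+\log(t+1)+N\log(r+1)\bigr)\deg_Z f = \aheight(f)+\bigl(\aheight(\alpha)+2\log 2\bigr)m,
$$
which is precisely the claimed bound.

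There is no real obstacle here; the only verification needed is the identification ${\aheight(X+\alpha)=\aheight(\alpha)}$, which follows immediately from the definition of the affine height. (Equivalently, one could give a direct place-by-place proof: expand ${(X+\alpha)^i}$ by the binomial theorem, bound the coefficients of $f^{(\alpha)}$ using ${\binom{i}{k}\le 2^m}$ at archimedean places and ${|\binom{i}{k}|_v\le 1}$ at the others, and sum; but invoking Remark~\ref{rts} is cleaner and already bundles this estimate.)
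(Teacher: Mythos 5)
Your argument is correct and is exactly the paper's proof: the authors also obtain the bound as a direct application of item~\ref{ig} of Lemma~\ref{lsombrapol} in the refined form of Remark~\ref{rts}, and your instantiation (${t=1}$, ${r=1}$, ${N=1}$, ${h=\aheight(X+\alpha)=\aheight(\alpha)}$, degree~$m$ in the substituted variable) reproduces the stated inequality verbatim. Nothing further is needed.
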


\begin{proof}
This  is a direct application of item~\ref{ig} of Lemma~\ref{lsombrapol}, together with Remark~\ref{rts}. \qed
\end{proof}

\medskip
In one special case item~\ref{ig} of Lemma~\ref{lsombrapol} can be refined. 

\begin{lemma}
\label{lsombradet}
Let 
$$
F_{ij}(X)\in \bar\Q[X]  \qquad (i,j=1,\ldots, s)
$$
be polynomials of degree bounded by~$\mu$ and of affine height bounded by~$h$; then 
$$
\aheight\left(\det(F_{ij})\right) \le sh+s(\log s+\mu\log2).
$$
\end{lemma}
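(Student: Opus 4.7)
I would expand the determinant by permutations, $\det(F_{ij}) = \sum_{\sigma \in S_s} \mathrm{sgn}(\sigma) \prod_{i=1}^s F_{i,\sigma(i)}(X)$, and estimate the coefficient norm $\|\det(F_{ij})\|_v$ of this polynomial in $X$ at each place $v$ of a common number field of definition. At a non-archimedean $v$, Gauss's lemma combined with the ultrametric triangle inequality immediately yields $\|\det(F_{ij})\|_v \le \max_\sigma \prod_{i=1}^s \|F_{i,\sigma(i)}\|_v$. At an archimedean $v$, iterating the one-variable product estimate $\|PQ\|_v \le (\min(\deg P, \deg Q)+1)\|P\|_v\|Q\|_v$ gives $\|\prod_i F_{i,\sigma(i)}\|_v \le (\mu+1)^{s-1}\prod_i \|F_{i,\sigma(i)}\|_v$, and the ordinary triangle inequality across the $s!$ permutations then contributes a factor $s!$, so $\|\det(F_{ij})\|_v \le s!\,(\mu+1)^{s-1}\,\max_\sigma \prod_i \|F_{i,\sigma(i)}\|_v$.

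Taking $\log^+$ and pulling out the archimedean constants (which are $\ge 1$), I obtain
$$
\log^+\|\det(F_{ij})\|_v \le \chi_v\bigl(\log(s!)+(s-1)\log(\mu+1)\bigr) + \max_\sigma \sum_{i=1}^s \log^+\|F_{i,\sigma(i)}\|_v,
$$
where $\chi_v$ is the indicator of archimedean places. The decisive combinatorial step is the trivial matrix matching inequality $\max_\sigma \sum_i a_{i,\sigma(i)} \le \sum_i \max_j a_{i,j}$, which here reads
$$
\max_\sigma \sum_i \log^+\|F_{i,\sigma(i)}\|_v \le \sum_i \log^+\|(F_{i,1},\ldots,F_{i,s})\|_v,
$$
using $\max_j\log^+=\log^+\max_j$. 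Summing over $v$ weighted by $[\K_v:\Q_v]/[\K:\Q]$ and invoking $\sum_{v\mid\infty}[\K_v:\Q_v]=[\K:\Q]$, the archimedean contribution sums to $\log(s!)+(s-1)\log(\mu+1)$, while the remaining term becomes $\sum_{i=1}^s \aheight(F_{i,1},\ldots,F_{i,s}) \le sh$ (each row's joint affine height is bounded by that of the whole matrix). The elementary bounds $\log(s!)\le s\log s$ and $(s-1)\log(\mu+1)\le s\mu\log 2$ (valid for all $\mu\ge 0$, via $\mu+1\le 2^\mu$) then give the stated $sh+s(\log s+\mu\log 2)$.

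The main subtlety is avoiding a brute-force application of item~\ref{ig} of Lemma~\ref{lsombrapol} (treating $\det$ as a polynomial of degree $s$ in the $s^2$ indeterminates $Y_{ij}$), which would produce $s\log(s^2+1)$ in place of the sharper $s\log s$, and would similarly weaken the $h$-coefficient if handled carelessly. The matching inequality $\max_\sigma \sum_i \le \sum_i \max_j$ is precisely what captures the ``one entry per row and column'' structure of the determinant, collapsing the column index into a row-wise joint norm and yielding a single factor $s$ in the main term rather than $s^2$.
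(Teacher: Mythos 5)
Your proof is correct, and it is in substance a reconstruction of the argument the paper never writes out: for Lemma~\ref{lsombradet} the text only refers to the end of Section~1.1.1 of \cite{KPS01}, where the bound is obtained by just this kind of local analysis of the Leibniz expansion (multiplicativity of the Gauss norm at the finite places, the combinatorial factors $s!$ and $(\mu+1)^{s-1}$ at the archimedean ones, then summation over places). Two remarks. First, your step $\sum_{i=1}^s \aheight(F_{i1},\ldots,F_{is})\le sh$ tacitly reads the hypothesis ``of affine height bounded by~$h$'' through the paper's convention on the height of a \emph{system} of polynomials, so that $h$ bounds the joint height of the entries (row by row is enough); this is the intended and indeed the only viable reading, since under a literal entry-by-entry interpretation the stated inequality fails already for $s=2$, $\mu=0$ with entries $F_{11}=F_{22}=p$, $F_{12}=1/p$, $F_{21}=-1/p$: each entry has height $\log p$, while the determinant $p^2+p^{-2}$ has height about $4\log p$, exceeding $2\log p+2\log 2$. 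Second, your computation actually establishes the sharper row-wise estimate $\aheight\left(\det(F_{ij})\right)\le \sum_{i=1}^s \aheight(F_{i1},\ldots,F_{is})+\log(s!)+(s-1)\log(\mu+1)$, which is precisely the form the paper uses implicitly later (for instance in the proof of Proposition~\ref{puquant}, where the two blocks of rows of the Sylvester matrix have different heights), so your version is, if anything, more directly usable. A last, trivial point: $\mu+1\le 2^{\mu}$ holds for integers $\mu\ge 0$ but not for all reals in $(0,1)$; since $\mu$ bounds degrees you may of course take it integral, but the parenthetical ``valid for all $\mu\ge 0$'' should be adjusted accordingly.
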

For the proof see~\cite{KPS01}, end of Section~1.1.1. 

\medskip

We also need an estimate for both the affine and the projective height of the $Y$-resultant $R_f(X)$ of a polynomial ${f(X,Y)\in \bar\Q[X,Y]}$ 
and its $Y$-derivative $f'_Y$, in terms of the affine (respectively, projective) height of~$f$. 
\begin{lemma}
\label{lheires}
Let ${f(X,Y)\in \bar\Q[X,Y]}$ be of $X$-degree~$m$ and $Y$-degree~$n$. Then
\begin{align}
\label{eaffres}
\aheight(R_f)&\le (2n-1)\aheight(f)+(2n-1)\left(\log(2n^2)+m\log 2\right),\\
\label{eprojres}
\pheight(R_f)&\le (2n-1)\pheight(f)+(2n-1)\log\left((m+1)(n+1)\sqrt{n}\right),
\end{align}
\end{lemma}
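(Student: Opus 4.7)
The plan is to realize $R_f(X)$ as the determinant of the standard $(2n-1)\times(2n-1)$ Sylvester matrix $S(X)=(F_{ij}(X))$ associated with the pair $(f,f'_Y)$. Writing $f(X,Y)=\sum_{j=0}^n a_j(X)Y^j$ with $a_j\in\bar\Q[X]$ and $\deg a_j\le m$, the non-zero entries of $S(X)$ are: the $a_j(X)$, appearing in the $n-1$ rows built from $f$ (with $n+1$ non-zero entries per row), and the $j\cdot a_j(X)$ for $1\le j\le n$, appearing in the $n$ rows built from $f'_Y$ (with $n$ non-zero entries per row). In particular every entry has $X$-degree at most $m$, affine height at most $\aheight(f)+\log n$, and satisfies $\|F_{ij}\|_v\le n\|f\|_v$ at archimedean places and $\|F_{ij}\|_v\le\|f\|_v$ at non-archimedean places.

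For the affine inequality \eqref{eaffres}, one simply applies Lemma~\ref{lsombradet} with $s=2n-1$, $\mu=m$ and $h=\aheight(f)+\log n$, obtaining
\[
\aheight(R_f)\le (2n-1)\bigl(\aheight(f)+\log n\bigr)+(2n-1)\bigl(\log(2n-1)+m\log 2\bigr),
\]
from which the stated bound follows via the elementary estimate $n(2n-1)\le 2n^2$.

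For the projective inequality \eqref{eprojres} I would argue place by place. At a non-archimedean $v$, Gauss's lemma and the ultrametric triangle inequality applied to the Leibniz expansion $\det S(X)=\sum_\sigma\mathrm{sgn}(\sigma)\prod_i F_{i\sigma(i)}(X)$ give $\|R_f\|_v\le\max_\sigma\prod_i\|F_{i\sigma(i)}\|_v\le\|f\|_v^{2n-1}$. At an archimedean place one extends $v$ to an embedding into $\C$; for $|X|_v=1$ one has $|a_j(X)|_v\le(m+1)\|f\|_v$, so the $L^2$-norms of the rows of $S(X)$ are at most $\sqrt{n+1}(m+1)\|f\|_v$ for the $n-1$ rows coming from $f$ and at most $n^{3/2}(m+1)\|f\|_v$ for the $n$ rows coming from $f'_Y$. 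Hadamard's inequality then yields
\[
|R_f(X)|_v\le (n+1)^{(n-1)/2}n^{3n/2}(m+1)^{2n-1}\|f\|_v^{2n-1}\qquad(|X|_v=1),
\]
and Cauchy's coefficient estimate $\|R_f\|_v\le\sup_{|X|_v=1}|R_f(X)|_v$ transfers this to $\|R_f\|_v$. Summing all local contributions and verifying the elementary inequality $\tfrac{n-1}{2}\log(n+1)+\tfrac{3n}{2}\log n\le(2n-1)\log\bigl((n+1)\sqrt n\bigr)$ yields \eqref{eprojres}.

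The main technical obstacle is the projective bound: the $\sqrt n$ factor appearing in the statement is tight enough that one must exploit the sparsity of the Sylvester matrix (only $n$ or $n+1$ non-zero entries per row) via the $L^2$-form of Hadamard's inequality combined with the maximum-modulus principle. A cruder row-by-row $L^\infty$ estimate, or direct expansion of $\det S$ with $\|PQ\|_v\le\|P\|_v\|Q\|_v$ type bounds on single-variable products, would introduce additional unwanted powers of $n$.
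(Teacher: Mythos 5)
Your proof is correct. For the affine bound~\eqref{eaffres} you do exactly what the paper does: present $R_f$ as the $(2n-1)\times(2n-1)$ Sylvester determinant whose entries are the $a_j(X)$ and $j\,a_j(X)$, of degree at most~$m$ and affine height at most ${\aheight(f)+\log n}$, and feed this into Lemma~\ref{lsombradet}; your reduction ${n(2n-1)\le 2n^2}$ is the ``obvious calculation'' the paper alludes to. The difference is in the projective bound~\eqref{eprojres}: the paper does not prove it at all but simply quotes Schmidt \cite[Lemma~4]{Sc90}, whereas you re-derive it from scratch, place by place --- Gauss-norm submultiplicativity and the ultrametric inequality on the Leibniz expansion at the finite places, and Hadamard's inequality on the unit circle combined with Cauchy's coefficient estimate at the archimedean places. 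Your local estimates are right (each nonzero Leibniz term has exactly $2n-1$ factors, so the exponent of $\|f\|_v$ is the full $2n-1$ even when ${\|f\|_v<1}$, which is what the projective height requires), and the closing numerical inequality $\tfrac{n-1}{2}\log(n+1)+\tfrac{3n}{2}\log n\le(2n-1)\log\bigl((n+1)\sqrt n\bigr)$ does hold. What your route buys is a self-contained argument, at the cost of reproducing what is essentially Schmidt's own proof; what the paper's route buys is brevity, outsourcing the archimedean bookkeeping to the reference. Your remark that a crude $L^\infty$ or term-by-term bound would lose extra powers of~$n$ is accurate and explains why the $L^2$ (Hadamard) form is the right tool here.
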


\begin{proof}
Estimate~(\ref{eprojres})  is due to Schmidt \cite[Lemma~4]{Sc90}. To prove~(\ref{eaffres}), we invoke Lemma~\ref{lsombradet}. 
Since $R_f(X)$ can be presented as a determinant of dimension ${2n-1}$, whose entries are polynomials of degree at most~$m$ and of affine height at most ${\aheight(f)+\log n}$, the result follows after an obvious calculation. \qed
\end{proof}

\begin{remark}
\label{rresx}
Estimate~(\ref{eaffres}) holds true also when ${m=0}$. We obtain the following statement: the resultant~$R_f$ of a polynomial $f(X)$ and its derivative $f'(X)$ satisfies
$$
\aheight(R_f)\le (2\deg f-1)\aheight(f) + (2\deg f-1)\log\left(2(\deg f)^2\right).
$$
\end{remark}

\subsection{Number fields and Discriminants}

We need some estimates for the discriminant of a number field in terms of the heights of its generators. 
 In this subsection $\K$ is  a number field, ${d=[\K:\Q]}$ and ${\norm(\cdot)=\norm_{\K/\Q}(\cdot)}$. 
The following result is due to Silverman \cite[Theorem 2]{Si84}.
\begin{lemma}
\label{lsilv}
Let ${\ua=(a_1,\ldots,a_k)}$ be a point in ${\bar\K^k}$.  Put ${\nu=[\K(\ua):\K]}$. Then
$$
\partial_{\K(\ua)/\K} \leq 2(\nu-1)\aheight(\ua)+\log \nu. \eqno\square
$$
\end{lemma}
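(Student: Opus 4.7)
The plan is to reduce to the case $k=1$ by induction on the number of coordinates, and then, in that single-variable case, to bound the relative discriminant by the discriminant of the minimal polynomial of $a_1$ using the Vandermonde--Hadamard inequality.

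First I would induct on $k$. For $k\ge 2$, set $\L=\K(a_1,\ldots,a_{k-1})$, $\nu_1=[\L:\K]$ and $\nu_2=[\K(\ua):\L]$, so that $\nu=\nu_1\nu_2$. Additivity of the relative discriminant in towers gives $\partial_{\K(\ua)/\K}=\partial_{\L/\K}+\partial_{\K(\ua)/\L}$. The inductive hypothesis bounds $\partial_{\L/\K}\le 2(\nu_1-1)\aheight(\ua)+\log\nu_1$ (using $\aheight(a_1,\ldots,a_{k-1})\le\aheight(\ua)$), while the $k=1$ case, applied over~$\L$ to the element $a_k$, gives $\partial_{\K(\ua)/\L}\le 2(\nu_2-1)\aheight(a_k)+\log\nu_2\le 2(\nu_2-1)\aheight(\ua)+\log\nu_2$. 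Adding, the claim for $\ua$ follows from the elementary inequality $(\nu_1-1)+(\nu_2-1)\le\nu_1\nu_2-1=\nu-1$ (equivalent to $(\nu_1-1)(\nu_2-1)\ge 0$) together with $\log\nu_1+\log\nu_2=\log\nu$.

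For the base case $k=1$, set $a=a_1$ and let $F\in\K[X]$ be the minimal polynomial of~$a$, of degree~$\nu$ and leading coefficient~$f_0$; after multiplying by a suitable element of $\OO_\K$ I may assume $F\in\OO_\K[X]$ has trivial content. Applying Corollary~\ref{cint} with $R=\OO_\K$, the integral closure of $\OO_\K$ in $\K(a)$ is contained in $\disc(F)^{-1}\OO_\K[a]$, so $\DD_{\K(a)/\K}$ divides $\disc(F)\cdot\OO_\K$, which gives $\log\norm_{\K/\Q}\DD_{\K(a)/\K}\le\sum_\sigma\log|\disc(F^\sigma)|$, where $\sigma$ runs over the $[\K:\Q]$ embeddings of~$\K$ into $\barq$. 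Writing $\disc(F^\sigma)=(f_0^\sigma)^{2\nu-2}V_\sigma^2$, with $V_\sigma$ the Vandermonde determinant of the $\K$-conjugates $a^{(\sigma,i)}$ of~$a$, Hadamard's row inequality applied to $V_\sigma$ yields $|V_\sigma|\le\nu^{\nu/2}\prod_i\max(1,|a^{(\sigma,i)}|)^{\nu-1}$, and hence $|\disc(F^\sigma)|\le\nu^\nu M(F^\sigma)^{2(\nu-1)}$, where $M(\cdot)$ is the Mahler measure. Summing the logarithms over archimedean embeddings, using the standard identity $\sum_{v\in M_\K}[\K_v:\Q_v]\log M(F^v)=[\K:\Q]\nu\,\aheight(a)$ (the non-archimedean terms vanish since $F$ has trivial content), and dividing by $[\K(a):\Q]=[\K:\Q]\nu$, I arrive at $\partial_{\K(a)/\K}\le\log\nu+2(\nu-1)\aheight(a)$, as required.

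The main obstacle is the normalization of $F$: the factor $(f_0^\sigma)^{2\nu-2}$ in $\disc(F^\sigma)$ must combine with the archimedean Mahler measures to reproduce exactly $\nu[\K:\Q]\aheight(a)$ when summed, and no more. Choosing $F$ primitive in $\OO_\K[X]$ (equivalently, invoking the absolute Mahler measure expression for $\aheight$) handles this cleanly, and then the Vandermonde--Hadamard step supplies the sharp additive constant $\log\nu$ rather than a wasteful $O(\nu)$ that a naïve triangle-inequality estimate would produce.
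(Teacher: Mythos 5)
A remark on context first: the paper does not prove this lemma at all --- it is quoted from Silverman \cite{Si84}, Theorem~2 --- so your attempt can only be measured against the standard argument, which is indeed the one you chose: reduce to a single coordinate via additivity in towers, and treat one algebraic number through the discriminant of its minimal polynomial, Hadamard's bound for the Vandermonde determinant, and the expression of the height by Mahler measures. Your reduction step is correct as written: the monotonicity $\aheight(a_1,\ldots,a_{k-1})\le\aheight(\ua)$ and $\aheight(a_k)\le\aheight(\ua)$, the tower formula, and the inequality $(\nu_1-1)+(\nu_2-1)\le\nu_1\nu_2-1$ do exactly what you claim.

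The genuine gap is in the base case, at the very point you single out as ``the main obstacle''. You cannot in general ``multiply by a suitable element of $\OO_\K$'' so that $F\in\OO_\K[X]$ has trivial content: scaling $F$ by $\lambda\in\K^\ast$ multiplies the content ideal by $(\lambda)$, so the ideal class of the content is an invariant of the line $\K^\ast F$, and it need not be principal once $\OO_\K$ is not a principal ideal domain. Since the lemma is used in the paper over varying ground fields (for instance over the fields $\K(\alpha)$), one cannot restrict to class number one. Moreover the error goes the wrong way: for $F\in\OO_\K[X]$ with content $\mathfrak{c}$, the identity you invoke reads
$$
\sum_{v\in M_\K^\infty}[\K_v:\Q_v]\log M(F^v)\;=\;[\K:\Q]\,\nu\,\aheight(a)\;+\;\log\norm_{\K/\Q}\mathfrak{c},
$$
so discarding the non-archimedean terms inflates the archimedean Mahler measures, and your final estimate acquires the extra term $\frac{2(\nu-1)}{[\K(a):\Q]}\log\norm_{\K/\Q}\mathfrak{c}$; taming $\norm\mathfrak{c}$ globally (say by choosing a small ideal in its class via Minkowski) would introduce the discriminant of~$\K$, which the statement does not allow.

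The repair is local rather than global: bound $\ord_\vp\DD_{\K(a)/\K}$ prime by prime, normalizing $F$ to unit content in $\OO_{\K,\vp}[X]$ (possible since the localization is principal); equivalently, prove the content-corrected divisibility $\DD_{\K(a)/\K}\,\mathfrak{c}^{\,2(\nu-1)}\mid \disc(F)\,\OO_\K$ and keep the Gauss-norm terms at the finite places in the Mahler-measure identity --- the two occurrences of $\log\norm\mathfrak{c}$ then cancel and your computation yields exactly $2(\nu-1)\aheight(a)+\log\nu$. A smaller point: the passage from Corollary~\ref{cint} (the module inclusion $\barr\subset\Delta^{-1}R[\alpha]$) to ``$\DD_{\K(a)/\K}$ divides $\disc(F)\OO_\K$'' deserves a word of justification; the cleanest route is to note that $\disc(F)$ is the discriminant of the basis $1,\,f_0a,\,f_0a^2+f_1a,\ldots$ of $\K(a)/\K$, whose members are integral over $\OO_\K$, and that the relative discriminant ideal divides the discriminant of any such integral basis.
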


This has the following consequence.

\begin{corollary}
\label{cmanydisc}
Let ${F(X)\in \K[X]}$ be a polynomial of degree~$N$. Then 
\begin{equation}
\label{emanydisc}
\sum_{F(\alpha)=0}\partial_{\K(\alpha)/\K} \le 2(N-1)\pheight(F) +3N\log N,
\end{equation}
the sum being over the roots of~$F$. 
\end{corollary}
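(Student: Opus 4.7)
The plan is to apply Silverman's bound (Lemma \ref{lsilv}) root-by-root and then invoke Mahler's inequality (Remark \ref{rheiroots}) to control the aggregate affine heights in terms of $\pheight(F)$.

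First, for each distinct root $\alpha$ of $F$, Lemma \ref{lsilv} (applied to the $1$-dimensional vector $\ua=\alpha$) gives
$$
\partial_{\K(\alpha)/\K}\le 2(\nu_\alpha-1)\aheight(\alpha)+\log \nu_\alpha, \qquad \nu_\alpha:=[\K(\alpha):\K].
$$
Since $\alpha$ is a root of $F\in\K[X]$ of degree $N$, its minimal polynomial over $\K$ divides $F$, so $\nu_\alpha\le N$. Summing over the (distinct) roots,
$$
\sum_{F(\alpha)=0}\partial_{\K(\alpha)/\K}\le 2(N-1)\sum_{F(\alpha)=0}\aheight(\alpha)+\sum_{F(\alpha)=0}\log\nu_\alpha.
$$

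Next, I would bound the two resulting sums separately. For the height sum, the distinct roots form a subset of the multiset of all $N$ roots of $F$ (counted with multiplicity), hence Mahler's inequality (Remark \ref{rheiroots}) yields
$$
\sum_{F(\alpha)=0}\aheight(\alpha)\le \pheight(F)+\log(N+1).
$$
For the second sum, there are at most $N$ distinct roots and each $\log\nu_\alpha\le \log N$, so the sum is at most $N\log N$.

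Plugging these in,
$$
\sum_{F(\alpha)=0}\partial_{\K(\alpha)/\K}\le 2(N-1)\pheight(F)+2(N-1)\log(N+1)+N\log N,
$$
and an elementary check (using $(N-1)\log(N+1)\le N\log N$, valid for $N\ge 2$, and trivial for $N=1$) gives the claimed bound $2(N-1)\pheight(F)+3N\log N$. The only step with any content is Silverman's estimate, which is invoked as a black box; the rest is combinatorial bookkeeping, so there is no serious obstacle.
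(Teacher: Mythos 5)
Your proof is correct and follows essentially the same route as the paper: apply Silverman's bound (Lemma~\ref{lsilv}) to each root with $[\K(\alpha):\K]\le N$, control the sum of affine heights of the roots by Mahler's inequality (Remark~\ref{rheiroots}), and finish with the elementary estimate $(N-1)\log(N+1)\le N\log N$. No gaps.
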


\begin{proof}
Since for any root~$\alpha$ we have ${[\K(\alpha):\K]\le N}$, we estimate the left-hand side of~(\ref{emanydisc}) as 
$$
2(N-1)\sum_{F(\alpha)=0} \aheight(\alpha)+ N\log N 
$$
Remark~\ref{rheiroots} allows us to bound the sum on the right by ${\pheight(F)+\log(N+1)}$. Now, to complete the proof, just remark that ${(N-1)\log(N+1)\le N\log N}$. \qed
\end{proof}

\medskip

We shall also need a bound for the discriminant of a different nature, known as 
the \textsl{Dedekind-Hensel inequality} (see \cite[page~397]{De30} for historical comments and further references). This inequality gives an estimate of the relative discriminant of a number field extension in terms of the ramified places. 

\begin{lemma}
\label{ldehen}
Let~$\K$ be a number field of degree~$d$ over~$\Q$, and~$\L$ an extension of~$\K$ of finite degree~$\nu$, and let $\ram(\L/\K)$ be the set of places of~$\K$ ramified in~$\L$. 
Then 
\begin{equation}
\label{edehen}
\partial_{\L/\K} \le \frac{\nu-1}\nu \vheight\bigl(\ram(\L/\K)\bigr) + 1.26\nu.
\end{equation}
\end{lemma}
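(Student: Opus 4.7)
The plan is to apply the classical Dedekind bound on the different at each ramified prime, convert this to a bound on the $\vp$-adic valuation of $\disc_{\L/\K}$, and then sum up.

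First, for each prime $\vp$ of $\K$ ramified in $\L$ and each $\vq\mid\vp$ in $\L$ with ramification index $e=e_{\vq/\vp}$ and residue degree $f=f_{\vq/\vp}$, I invoke Dedekind's inequality $\ord_\vq(\gerd_{\L/\K})\le e-1+v_\vq(e)$ for the exponent of $\vq$ in the different $\gerd_{\L/\K}$. Using $v_\vq(n)=e\cdot v_\vp(n)$ for any $n\in\K$, together with $e\mid\nu$ (hence $v_\vp(e)\le v_\vp(\nu)$), summing $f\cdot\ord_\vq(\gerd_{\L/\K})$ over $\vq\mid\vp$, and applying $\sum_\vq ef=\nu$ together with $\sum_\vq f\ge 1$, I obtain the local bound $v_\vp(\disc_{\L/\K})\le(\nu-1)+\nu\,v_\vp(\nu)$.

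Next, I multiply this by $\log\norm_{\K/\Q}(\vp)$ and sum over ramified $\vp$. The tame contribution is $(\nu-1)\sum_{\vp\in\ram(\L/\K)}\log\norm_{\K/\Q}(\vp)=(\nu-1)[\K:\Q]\vheight(\ram(\L/\K))$. For the wild contribution $\nu\sum_\vp v_\vp(\nu)\log\norm_{\K/\Q}(\vp)$, I write $p$ for the rational prime under $\vp$, use $v_\vp(\nu)=e_{\vp/p}v_p(\nu)$ and $\log\norm_{\K/\Q}(\vp)=f_{\vp/p}\log p$, group by $p$, and invoke the identity $\sum_{\vp\mid p}e_{\vp/p}f_{\vp/p}=[\K:\Q]$ to collapse the sum to $\nu[\K:\Q]\sum_{p\mid\nu}v_p(\nu)\log p=\nu[\K:\Q]\log\nu$.

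Dividing the resulting inequality by $[\L:\Q]=[\K:\Q]\nu$ gives $\partial_{\L/\K}\le\frac{\nu-1}{\nu}\vheight(\ram(\L/\K))+\log\nu$, and the target inequality \eqref{edehen} follows from the trivial estimate $\log\nu\le 1.26\,\nu$ (valid for all $\nu\ge 1$). The only non-elementary input is Dedekind's different bound itself; the rest is careful bookkeeping between the $\vp$-adic and $p$-adic normalizations, so I do not anticipate any serious obstacle.
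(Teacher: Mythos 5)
Your argument breaks down at the assertion ``$e\mid\nu$ (hence $v_\vp(e)\le v_\vp(\nu)$)''. That divisibility holds when $\L/\K$ is Galois, but Lemma~\ref{ldehen} is stated (and is applied in the paper, e.g.\ to $\K(\tilp)/\K(P)$) for arbitrary finite extensions, where the ramification index of a single prime need not divide the degree and, more to the point, $v_p(e_{\vq/\vp})$ can strictly exceed $v_p(\nu)$. Consequently your local bound $\ord_\vp\bigl(\disc_{\L/\K}\bigr)\le(\nu-1)+\nu\,v_\vp(\nu)$ is false: take $\K=\Q$ and $\L=\Q[x]/(x^3-x-2)$, a cubic field whose discriminant is $-104=-2^3\cdot 13$ (it cannot be $-26$, which is $\equiv 2 \bmod 4$); at $p=2$ the splitting is $\vq_1^2\vq_2$ with $e=2\nmid 3$, and $\ord_2(\disc_{\L/\Q})=3$, whereas your bound would give $\nu-1+\nu v_2(3)=2$. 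The correct general substitute for your false step is only $v_p(e_{\vq/\vp})\le\log_p\nu$ (since $e_{\vq/\vp}\le\nu$), so from Dedekind's bound $\ord_\vq(\gerd_{\L/\K})\le e-1+v_\vq(e)$ one gets, per place $\vp$ of $\K$ above $p$, a wild excess of order $\nu\,e_{\vp/p}\log_p\nu$, i.e.\ about $\log\nu$ after normalization, \emph{for each} rational prime $p\le\nu$ that carries wild ramification.

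This is why your claimed intermediate inequality $\partial_{\L/\K}\le\frac{\nu-1}{\nu}\vheight\bigl(\ram(\L/\K)\bigr)+\log\nu$ cannot be reached by this method, and why the paper's error term is $1.26\nu$ rather than $\log\nu$: wild ramification can occur over every rational prime $p\le\nu$ simultaneously, so after summing the per-prime excess one is left with roughly $\pi(\nu)\log\nu$, and the paper (following the proof of Proposition~4.2.1 in its reference [Bi97]) bounds this using $\sum_{p\le\nu}1\le 1.26\,\nu/\log\nu$, which is exactly the estimate~\eqref{epix} invoked in ``the very last line''. Your tame part, the passage from the different to the discriminant via $\ord_\vp(\disc)=\sum_{\vq\mid\vp}f_{\vq/\vp}\ord_\vq(\gerd)$, and the bookkeeping with $\log\norm_{\K/\Q}(\vp)=f_{\vp/p}\log p$ are all fine, and in the special case of a Galois extension your computation is correct and even yields the sharper error term $\log\nu$; but for the general extensions the lemma must cover, the key wild-ramification count is missing, and the argument as written does not prove the statement.
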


This is  Proposition~4.2.1 from~\cite{Bi97} (though the notation in~\cite{Bi97} is different, and the quantity estimated therein is $\nu\partial{\L/\K}$ in our notation), the only difference being that the error term  is now explicit. The proof is the same as in~\cite{Bi97}, but in the very last line one should use the estimate ${\sum_{p\le \nu}1\le 1.26\nu/\log\nu}$, which is~\eqref{epix}. 

A similar estimate was obtained by Serre \cite[Proposition~4]{Se81}. However,~\eqref{edehen} is more suitable for our purposes. 

It is useful to have an opposite estimate as well. The following lemma is obvious. 

\begin{lemma}
\label{lantidehen}
In the set-up of Lemma~\ref{ldehen} we have ${\vheight\bigl(\ram(\L/\K)\bigr)\le \nu \partial_{\L/\K}}$. 
\end{lemma}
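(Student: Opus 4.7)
The plan is to unravel the definitions and reduce the claimed inequality to the classical fact that every ramified prime divides the relative discriminant. Writing $d=[\K:\Q]$ and recalling $[\L:\Q]=\nu d$, we compute
$$
\nu\,\partial_{\L/\K}=\frac{\log\norm_{\K/\Q}\DD_{\L/\K}}{d},\qquad \vheight(\ram(\L/\K))=\frac{\sum_{\vp\in\ram(\L/\K)}\log\norm_{\K/\Q}(\vp)}{d},
$$
so the inequality is equivalent to
$$
\prod_{\vp\in\ram(\L/\K)}\norm_{\K/\Q}(\vp)\ \le\ \norm_{\K/\Q}\DD_{\L/\K},
$$
where the product is over the finite ramified places (archimedean places contribute $1$ to the norm and hence nothing to $\vheight$, by our normalization).

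Next I would invoke the standard structure of the relative discriminant: writing $\DD_{\L/\K}=\prod_\vp \vp^{d_\vp}$, one has the formula $d_\vp=\sum_{\vq\mid\vp} f(\vq/\vp)\,d(\vq/\vp)$, where $d(\vq/\vp)$ is the exponent of $\vq$ in the different $\gerd_{\L/\K}$. The classical inequality $d(\vq/\vp)\ge e(\vq/\vp)-1$ (with equality in the tame case, and a larger value in the wild case) shows that $d(\vq/\vp)\ge 1$ as soon as $e(\vq/\vp)\ge 2$. Hence for every $\vp\in\ram(\L/\K)$ there is at least one $\vq\mid\vp$ with $e(\vq/\vp)\ge 2$, which forces $d_\vp\ge 1$. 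Consequently $\prod_{\vp\in\ram(\L/\K)}\vp$ divides $\DD_{\L/\K}$ as ideals of $\OO_\K$.

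Taking the $\K/\Q$-norm of this divisibility and then $\log/d$ yields exactly the required bound $\vheight(\ram(\L/\K))\le \nu\,\partial_{\L/\K}$. There is no real obstacle here; the only mild subtlety is to notice that the divisibility needs to hold only at each ramified prime individually (not to a high power), which is precisely what the inequality $d(\vq/\vp)\ge e(\vq/\vp)-1\ge 1$ delivers.
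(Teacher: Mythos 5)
Your proof is correct, and it is exactly the standard argument the paper has in mind (the authors simply declare the lemma obvious): after unwinding the normalizations, the inequality reduces to the fact that every ramified prime divides $\DD_{\L/\K}$ at least once, which you justify via the different exponent bound $d(\vq/\vp)\ge e(\vq/\vp)-1$. Your handling of the archimedean places (norm $1$, zero contribution) is also the right reading of the paper's conventions, so nothing further is needed.
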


This has the following consequence.

\begin{corollary}
\label{cramli}
Let ${\L_1, \ldots, \L_n}$ be a family of finite extensions of~$\K$  closed under the Galois conjugation over~$\K$. Then 
$$
\vheight\left(\bigcup_{i=1}^n \ram(\L_i/\K)\right) \le \sum_{i=1}^n\partial_{\L_i/\K}.
$$
\end{corollary}

\paragraph{Proof}
We may assume that the Galois action over~$K$ is transitive on ${\L_1, \ldots, \L_n}$ (otherwise, one obtains the estimate for every orbit of the Galois action and then sums the resulting inequalities up). In other words, the fields ${\L_1, \ldots, \L_n}$ form a full system of conjugates over~$K$, which means that 
$$
[\L_1:\K]=\ldots=[\L_n:\K]=n, \quad \ram(\L_1/\K)= \ldots=\ram(\L_n/\K), \quad  \partial_{\L_1/\K}= \ldots=\partial_{\L_n/\K}. 
$$
Hence 
$$
\vheight\left(\bigcup_{i=1}^n \ram(\L_i/\K)\right)= \vheight\bigl( \ram(\L_1/\K)\bigr)\le n  \partial_{\L_1/\K}= \sum_{i=1}^n\partial_{\L_i/\K}. 
\eqno\square
$$

\section{Power Series}
Our main technical tool is the quantitative Eisenstein theorem, based on the work of Dwork, Robba, Schmidt and van der Poorten \cite{dwork-robba,dwork-vdpoorten.eisenstein,Sc90}, in the form presented in~\cite{BB12}. 
Let 
\begin{equation}
\label{epuiseux}
y= \sum_{k=-k_{0}}^{\infty}a_{k} x^{k/e}
\end{equation}
be an algebraic power series with coefficients in~$\bar\Q$, where we assume $k_{0} \geq 0$ and $a_{-k_{0}} \ne 0$ when $k_{0} >0$. The classical Eisenstein theorem tells that the coefficients of this series belong to some number field, that for every valuation~$v$ of this field $|a_k|_v$ grows at most exponentially in~$k$, and for all but finitely many~$v$ we have ${|a_k|_v\le 1}$ for all~$k$. We need a quantitative form of this statement, in terms of an algebraic equation  ${f(x,y)=0}$ satisfied by~$y$. 

\subsection{Eisenstein Theorem}

Thus, let ${f(X,Y) \in \K(X,Y)}$ be a polynomial over a number field~$\K$. We put 
\begin{equation}
\label{edegsf}
d=[\K:\Q], \qquad m=\deg_Xf, \qquad n=\deg_Yf.
\end{equation}
Write
\begin{equation}
\label{ef01}
f(X,Y)=f_0(X)Y^n+f_1(X)Y^{n-1}+\ldots
\end{equation}
${[\L:\K] \leq n}$. 
Finally, for ${v\in M_\K}$ we denote by~$d_v$ its local degree over~$\Q$, and by~$\norm v$ its absolute norm:
\begin{equation}
\label{edv}
d_v=[\K_v:\Q_v], \qquad \norm v= \norm_{\K/\Q}(v).
\end{equation}
With this notation,  the height $\vheight(S)$ of a finite set of places ${S\subset M_\K}$ is given by ${d^{-1}\sum_{v\in S}d_v\log\norm v}$. 

The following is Theorem~6.3 from~\cite{BB12}. 

\begin{theorem}
\label{teisen}
Let~$\K$ be a number field and ${f(X,Y) \in \K(X,Y)}$ a separable polynomial. We use notation~\eqref{edegsf} and~\eqref{ef01}.  Let~$y$ be an algebraic power series, written as in~(\ref{epuiseux}), and satisfying ${f(x,y)=0}$. 
For every ${v\in M_\K}$ there exist real numbers ${A_{v}, B_{v} \geq 1}$, with ${A_{v} = B_{v} = 1}$
for all but finitely many~$v$, such that 
\begin{align}
\label{eav}
 d^{-1} \sum_{v\in M_{\K}}d_{v}\log A_{v} &\leq  3n\bigl(\pheight(f) + \log (mn) +3e\bigr), \\
 \label{ebv}
 d^{-1} \sum_{v \in M_{\K}}d_{v} \log B_{v} &\leq  \pheight(f)+2.
 \end{align}
and for any extension~$\barv$ of~$v$ to~$\bar\K$ we have
\begin{equation}
\label{eeis}
|a_{k}|_\barv \leq B_{v}A_{v}^{k/e-\lfloor-k_0/e\rfloor}   \qquad (k \geq -k_{0}).
 \end{equation}
\end{theorem}

\begin{remark}
We shall use this theorem only  in the ``integral case'' ${k_0=0}$, when~\eqref{eeis} becomes
 \begin{equation}
\label{eeisint}
|a_{k}|_{\barv} \leq B_{v}A_{v}^{k/e} \qquad (k \geq 0),
 \end{equation}
 but we prefer to state the theorem in full generality. 
\end{remark}

We will also use two consequences of this theorem, obtained in~\cite{BB12} as well. To state them, recall that 
the Puiseux theorem implies existence of ${n=\deg_Yf}$ distinct series ${y_1, \ldots, y_n}$, which can be written as
\begin{equation}
\label{ef_i}
y_i(x)=\sum_{k=-k_0(i)}^\infty a_{ik}x^{k/e_i} \qquad (i=1, \ldots, n),
\end{equation}
and which satisfy ${f\bigl(x,y_i(x)\bigl)=0}$. 

 We denote by ${D(X)=D_f(X)}$ the $Y$-discriminant of the polynomial $f(X,Y)$. Given a polynomial $P(X)$, we denote by $\ord_\alpha P(X)$  the order of~$\alpha$ as the root of $P(X)$.

The following proposition is composed from  Theorems~6.4 and~8.5  from~\cite{BB12}.

\begin{proposition}
\label{peisen}
Let
${f(X,Y)\in \K[X,Y]}$ be as above and let ${y_1, \ldots, y_n}$ be the~$n$ distinct series, written as in~\eqref{ef_i} and satisfying ${f\bigl(x,y_i(x)\bigl)=0}$.  

\begin{enumerate}
\item
\label{ibigv}
Let~$T$ be the (finite) set of ${v\in M_\K}$ such that ${|a_{ik}|_\barv>1}$ for some coefficient $a_{ik}$ and some extension~$\barv$ of~$v$ to~$\bar\K$. Then
\begin{equation}
\label{ehes}
\vheight(T) \le 3n\bigl(\pheight(f)+\log(mn)+1). 
\end{equation}

\item
\label{isumli}
The number fields ${\L_1, \ldots, \L_n}$, generated over~$\K$ by the coefficients of ${y_1, \ldots, y_n}$, respectively, satisfy
\begin{equation}
\label{esumordgen}
\sum_{i=1}^n\partial_{\L_i/\K} \le 8n \bigl(\ord_0D(X)+1\bigr)\bigl(\pheight(f) + 5n +\log m\bigr).
\end{equation}

\end{enumerate}
\end{proposition}

\subsection{The ``Essential'' Coefficients}
\label{ssess}
Let ${y\in \barq((x^{1/e}))}$ be a an algebraic power series written as in~\eqref{epuiseux}.  
We assume that~$e$ is smallest possible: ${y\notin \barq((x^{1/e'}))}$ for ${e'<e}$.

We define the $k$-th ramification index ${\epsilon_k=\epsilon_k(y)}$ as the smallest natural~$e'$ such that the  $k$-th partial sum
${y^{(k)}= \sum_{\ell=-k_{0}}^{k}a_\ell x^{\ell/e}}$ belongs to ${\barq((x^{1/e'}))}$. By the definition,
$$
\epsilon_{-k_0}=1, \qquad \epsilon_k\mid \epsilon_{k+1},
$$
and since~$e$ is smallest possible, we have ${\epsilon_k=e}$ for all sufficiently large~$k$. 

We call an index ${k>-k_0}$ \textsl{essential} if ${\epsilon_k>\epsilon_{k-1}}$ (that is, we ``gain new ramification'' with the term $a_kx^{k/e}$). The corresponding coefficient $a_k$ is called an \textsl{essential coefficient}. Clearly, an essential coefficient cannot be~$0$.

The series~$y$ can have only finitely many essential indices.  We want to estimate the sum of the heights of the essential coefficients. 
We denote by $\ord_0$ the discrete valuation on the local ring ${\barq[[x^{1/e}]]}$ normalized to have ${\ord_0(x)=1}$. 

{\sloppy

\begin{proposition}
\label{pess}
Let ${f(X,Y) \in \barq(X,Y)}$ be a separable polynomial. We use notation~\eqref{edegsf} and~\eqref{ef01}.  
Let~$y$ be an algebraic power series satisfying ${f(x,y)=0}$.
Assume that ${f_0(0) \ne 0}$. Then
\begin{equation}
\label{esumesss}
\sum_{\text{$k$ essential}}\vheight(a_k)\le \bigr(\pheight(f)+2\bigl)\log_2e+ 3n\bigl(\pheight(f) + \log (mn) +3e\bigr)\ord_0\bigl(f'_Y(x,y)\bigr)
\end{equation}
\end{proposition}

}

If ${f_0(0) \ne 0}$ then the the series $y(x)$ is  integral over the ring $\barq[[x]]$ and can be written as 
\begin{equation}
\label{epuiseuxint}
y(x)=\sum_{k=0}^\infty a_{k}x^{k/e} 
\end{equation}  
The assumption ${f_0(0)\ne 0}$ is purely technical; a similar result  holds in general as well. However, without this assumption estimate~\eqref{esumesss} gets weaker than we need, while assuming ${f_0(0)\ne 0}$ does not hurt generality: see Section~\ref{sproofcw}. 

The proof of Proposition~\ref{pess} relies on the following lemma (which is an analog of Lemma~7.2 in~\cite{BB12}). 

\begin{lemma}
\label{less}
Assume that ${f_0(0) \ne 0}$. Then there is at most $\log_2e$  essential indices, and their sum does not exceed $e\,\ord_0\bigl(f'_Y(x,y)\bigr)$. 
\end{lemma}
\paragraph{Proof}
Since ${\epsilon_{k-1}\mid \epsilon_k}$, we have ${\epsilon_k\ge 2\epsilon_{k-1}}$ whenever~$k$ is essential, which means that there can be at most $\log_2e$ essential indices.

Now let us prove the statement about the sum. 
Together with the series~$y$ we consider the ``twisted series'' 
$$
\sum_{k=0}^\infty a_k\zeta^{(j-1)k}x^{k/e} \in \barq[[x^{1/e}]] \qquad (j=1, \ldots , e), 
$$
where~$\zeta$ is a primitive $e$-th root of unity. These~$e$ series are among the~$n$ distinct series ${y_1, \ldots, y_n}$, which satisfy ${f(x,y_i)=0}$,  and after  re-numbering  we may assume that 
$$
y_j= \sum_{k=0}^\infty a_k\zeta^{(j-1)k}x^{k/e} \in \barq[[x^{1/e}]] \qquad (j=1, \ldots , e).
$$
In particular, ${y=y_1}$. 

By the definition of~$\epsilon_k$ we have  ${y_j^{(k)}=y_{j'}^{(k)}}$ if and only if ${j\equiv j'\mod \epsilon_k}$. In particular, ${y_j^{(k)}=y^{(k)}}$ if and only if ${\epsilon_k\mid (j-1)}$. We partition the set ${J=\{2,3,\ldots, e\}}$ as
$$
J=J_1\cup J_2\cup J_3\ldots , \qquad J_k\cap J_\ell =\varnothing \quad (k\ne \ell)
$$
where 
$$
J_k= \bigl\{j\in J: \epsilon_{k-1}\mid (j-1), \ \epsilon_k\nmid(j-1)\bigl\}. 
$$
The following two observations are now crucial:
\begin{itemize}
\item
for ${j\in J}$ we have ${\ord_0(y-y_j)=k/e}$ if and only if ${j\in J_k}$;

\item
the set~$J_k$ is not empty if and only if~$k$ is an essential index for~$y$. 

\end{itemize}
Using this, we find 
\begin{equation}
\label{esumess}
\sum_{\text{$k$ essential}}\frac ke \le \sum_{k=0}^\infty\frac ke|J_k| =\ord_0\left(\prod_{j=2}^e (y-y_j)\right)
\end{equation}
Since ${f_n(0)\ne0}$, all the series ${y_1, \ldots, y_n}$ are integral over $\barq[[x]]$. Hence the product in the right-hand side of~\eqref{esumess} divides
${f'_Y(x,y)=f_0(x)\prod_{j=2}^n (y-y_j)}$. 
It follows that the right-hand side of~\eqref{esumess} does not exceed $\ord_0\bigl(f'_Y(x,y)\bigr)$, which proves the lemma. \qed

\paragraph{Proof of Proposition~\ref{pess}}
By Theorem~\ref{teisen} we have
$$
\vheight(a_k) \le  \pheight(f)+2+ \frac ke \cdot 3n\bigl(\pheight(f) + \log (mn) +3e\bigr). 
$$
Hence
$$
\sum_{\text{$k$ essential}}\vheight(a_k)\le \bigr(\pheight(f)+2\bigl)\sum_{\text{$k$ essential}} 1+ 3n\bigl(\pheight(f) + \log (mn) +3e\bigr)\sum_{\text{$k$ essential}}\frac ke.
$$
We conclude, applying the lemma. \qed

\medskip

Now assume that~$\K$ is a number field and~$y$ a series with coefficients in~$\K$.  We denote by $\Ess(y)$ the set of places ${v\in M_\K}$ such that ${|a_k|_v<1}$ for some essential coefficient~$a_k$ of~$y$:
$$
\Ess(y) =\{v\in M_K: \text{ there exists an essential index~$k$ such that $|a_k|_v<1$} \}
$$

{\sloppy

\begin{proposition}
\label{pessen}
Let
${f(X,Y)\in \K[X,Y]}$ be a separable polynomial. We use notation~\eqref{edegsf} and~\eqref{ef01}.  
Assume that ${f_0(0) \ne 0}$. Let ${y_1, \ldots, y_n}$ be the~$n$ distinct series,  satisfying ${f\bigl(x,y_i(x)\bigl)=0}$.  Assume that the coefficients of all these series belong to~$\K$. 
Then
\begin{equation} 
\label{ehess}
\vheight\left(\bigcup_{i=1}^n\Ess(y_i)\right)\le n\bigl( \pheight(f)+2\bigl)+ 3n\bigl(\pheight(f) + 4n +\log m \bigr)\ord_0D(X).
\end{equation}
where $D(X)$ is the $Y$-discriminant of $f(X,Y)$. 
\end{proposition}

}

\paragraph{Proof}
Recall that  the series~$y_i$ has $e_i$ ``twists'' among ${y_1, \ldots, y_n}$, as defined in the proof of Lemma~\ref{less}. If~$y_j$ is a twist of~$y_i$ then each coefficient of~$y_j$ is equal to the corresponding coefficient of~$y_i$ times an $e_i$-th root of unity, which implies that ${\Ess(y_i)=\Ess(y_j)}$. 

Select a maximal subset from ${\{y_1, \ldots, y_n\}}$ such that none of its elements is a twist of the other. After re-numbering, we may assume that this subset is ${\{y_1, \ldots, y_s\}}$ (this is \textsc{not} the numbering adopted in the  proof of Lemma~\ref{less}). Then each of ${y_1, \ldots, y_n}$ is a twist of one of ${y_1, \ldots, y_s}$, which implies that 
${\bigcup_{i=1}^n\Ess(y_i)=\bigcup_{i=1}^s\Ess(y_i)}$
and ${e_1+\ldots+e_s=n}$.

Item~\ref{idenum} of Proposition~\ref{pvhei} implies that ${\vheight\bigl(\Ess(y_i)\bigr)}$ is bounded by the sum of the heights of the essential coefficients of~$y_i$. Now, using Proposition~\ref{pess} we obtain
\begin{align*}
\vheight\left(\bigcup_{i=1}^n\Ess(y_i)\right)&=\vheight\left(\bigcup_{i=1}^s\Ess(y_i)\right)\\
& \le \sum_{i=1}^s\Bigl( \bigr(\pheight(f)+2\bigl)\log_2e_i+ 3n\bigl(\pheight(f) + \log (mn) +3e_i\bigr)\ord_0\bigl(f'_Y(x,y_i)\bigr)\Bigr)\\
& \le \bigl( \pheight(f)+2\bigl)\sum_{i=1}^s\log_2e_i+ 3n\bigl(\pheight(f) + 4n +\log m \bigr)\sum_{i=1}^s\ord_0\bigl(f'_Y(x,y_i)\bigr)\\
&\le \bigl( \pheight(f)+2\bigl)\sum_{i=1}^se_i+ 3n\bigl(\pheight(f) + 4n +\log m \bigr)\sum_{i=1}^n\ord_0\bigl(f'_Y(x,y_i)\bigr)\\
&= n\bigl( \pheight(f)+2\bigl)+ 3n\bigl(\pheight(f) + 4n +\log m \bigr)\ord_0D(X),
\end{align*}
as wanted. \qed


\section{Proximity and Ramification}
\label{sprox}

This section is the technical heart of the article. We consider a covering ${\CC\stackrel x\to \PP^1}$, defined over a number field~$\K$, and call a point ${P\in \CC(\bar\K)}$ \textsl{semi-defined} over~$\K$ if ${x(P)\in \PP^1(\K)}$. We define a finite set~$\QQ$ of  points from  $\CC(\bar\K)$ (which include the finite ramified points of the covering~$x$, but may contain some other points as well) and prove two statements (Propositions~\ref{pclose} and~\ref{pram} below) which, informally, assert the following.

\begin{itemize}

\item
If a finite place ${v\in M_\K}$ ramifies in the field $\K(P)$ (where~$P$ is semi-defined over~$\K$) then (unless~$v$ is ``bad'' in certain sense) the point~$P$ must be ``$v$-adically close'' to a point from the set~$\QQ$ (Propositions~\ref{pclose}). 

\item
Given a point~$Q$ on~$\CC$ and a finite place~$v$ (again, it should not be ``bad'' in some sense), for the points~$P$ (semi-defined over~$\K$)  in a ``$v$-adic neighborhood'' of~$Q$, the $v$-ramification in the field $\K(P)$ is determined by the ``$v$-adic distance'' between~$P$ and~$Q$ and the ramification of the point~$Q$ over~$\PP^1$. Roughly speaking, ``geometric ramification determines arithmetic ramification'' (Propositions~\ref{pram}). 

\end{itemize}

It is not difficult to make qualitative statements of this kind, but it is a rather delicate task to make everything explicit. In particular, we will explicitly estimate (Proposition~\ref{pquant}) the set of the ``bad'' places.

\subsection{Proximity}
\label{ssprox}

Now let us be precise. In this section we fix, once and for all:

\begin{itemize}
\item
a number field~$\K$;
\item
an absolutely irreducible smooth projective curve~$\CC$ defined over~$\K$;

\item
a non-constant rational function ${x\in \K(\CC)}$;

\item
one more rational function ${y\in \K(\CC)}$ such that ${\K(\CC)=\K(x,y)}$ (existence of such~$y$ follows from the primitive element theorem). 
\end{itemize}

Let ${f(X,Y)\in \K[X,Y]}$ be the $\K$-irreducible polynomial such that ${f(x,y)=0}$ (it is well-defined up to a constant factor). Since~$\CC$ is absolutely irreducible, so is the polynomial $f(X,Y)$.

We put ${m=\deg_Xf}$, ${n=\deg_Yf}$, and write
\begin{equation}
\label{efxy}
f(X,Y)= f_0(X)Y^n+f_1(X)Y^{n-1}+ \cdots+ f_n(X).
\end{equation}

Let ${Q\in \CC(\bark)}$ be a finite $\bark$-point of~$\CC$ (``finite'' means that~$Q$ is not a pole of~$x$). We set ${\alpha=x(Q)}$ and we denote by~$e_Q$ the ramification index of~$x$ at~$Q$ (that is, ${e_Q=\ord_Q(x-\alpha)}$). When it does not cause a confusion  we  write~$e$ instead of~$e_Q$.  Fix a primitive $e$-th root of unity ${\zeta=\zeta_e}$. Then there exist~$e$ equivalent Puiseux expansions of~$y$ at~$Q$:
\begin{equation}
\label{epuis}
y^{(Q)}_j= \sum_{k = -k^{(Q)}}^{\infty} a_k^{(Q)}\zeta^{(j-1)k}(x-\alpha)^{k/e} \qquad (j=1, \ldots, e),
\end{equation}
where ${k^{(Q)}=\max\left\{0, -\ord_Q(y)\right\}}$.  

Let~$\barv$ be a place of~$\bark$. We say that the series~(\ref{epuis}) converge  $\barv$-adically  at ${\xi\in \bark}$, if, for a fixed $e$-th root ${\sqrt[e]{\xi-\alpha}}$, the~$e$ numerical series 
$$
\sum_{k = -k^{(Q)}}^{\infty} a_k^{(Q)}\left(\zeta^{j-1}\sqrt[e]{\xi-\alpha}\right)^k \qquad (j=1, \ldots, e)
$$
converge in the $\barv$-adic topology. We denote by $y^{(Q)}_j(\xi)$, with ${j=1, \ldots, e}$, the corresponding sums. While the individual sums depend  on the particular choice of the root 
${\sqrt[e]{\xi-\alpha}}$,  the very fact of convergence, as well as the set ${\left\{y^{(Q)}_1(\xi),\ldots, y^{(Q)}_{e}(\xi)\right\}}$ of the sums, are independent of the choice of the root.

Now we are ready to introduce the principal notion of this section, that is of \textsl{proximity of a point to a different point} with respect to a given place ${\barv\in M_\bark}$. 

\begin{definition}
\label{dprox}
Let ${P\in \CC(\bark)}$ be a finite $\bark$-point of~$\CC$, and put ${\xi=x(P)}$. We say that~$P$ is \textbf{$\barv$-adically close} to~$Q$ if the following conditions are satisfied:

\begin{itemize}
\item
${|\xi-\alpha|_{\barv}<1}$;

\item
the~$e$ series~(\ref{epuis}) $\barv$-adically converge at~$\xi$, and one of the sums $y^{(Q)}_j(\xi)$ is equal to $y(P)$.

\end{itemize}
\end{definition}

\textbf{An important warning:} the notion of proximity just introduced is not symmetric in~$P$ and~$Q$: the proximity of~$P$ to~$Q$ does not imply, in general, the proximity of~$Q$ to~$P$. Intuitively, one should think of~$Q$ as a ``constant'' point, and of~$P$ as a ``variable'' point.

To state the main results of this section, we have to define a finite set~$\QQ$ of $\bark$-points of the curve~$\CC$, and certain  finite sets of ``bad'' places of the field~$\K$. Let~ ${R(X)=R_f(X)\in \K[X]}$   be the $Y$-resultant of $f(X,Y)$ and $f'_Y(X,Y)$, and let~$\AA$ be the set of the roots of $R(X)$:
$$
\AA=\{\alpha \in \bark : R(\alpha)=0\}.
$$
We define~$\QQ$ as follows:
$$
\QQ=\left\{Q\in \CC(\bark) : x(Q)\in \AA\right\}.
$$
It is important to notice that~$\QQ$ contains all the finite ramification points of~$x$ (and may contain some other points as well).  Also, the set~$\QQ$ is Galois-invariant over~$\K$: every point belongs to it together with its Galois orbit over~$\K$.

Now let us define the finite sets of ``bad'' places of~$\K$ mentioned above. 
First of all  we assume (as we may, without loss of generality) that 
\begin{equation}
\label{ef0monic}
\text{the polynomial $f_0(X)$, defined in~(\ref{efxy}), is monic.}
\end{equation} 
In particular,~$f$ has a coefficient equal to~$1$, which  implies equality of the affine and the projective heights of~$f$:
\begin{equation}
\label{epah}
\aheight(f)=\pheight(f).
\end{equation}
Now, we define
\begin{align*}
T_1&=\left\{v\in M_\K^0 : \text{the prime below $v$ is $\le n$}\right\},\\
T_2&=\left\{v\in M_\K^0 : |f|_v>1\right\}.
\end{align*}
Further, let~$r_0$ be the leading coefficient of $R(X)$. We define
$$
T_3 =\left\{v\in M_\K^0 : |r_0|_v<1\right\}.
$$
Next, we let~$\Delta$ be the resultant of $\hatr(X)$ and $\hatr'(X)$, where  $\hatr$ is the radical of~$R$, see Subsection~\ref{ssloc}. Since the polynomial $\hatr(X)$ is separable, we have ${\Delta \in \K^\ast}$. Now we define the set~$T_4$ as follows:
$$
T_4= \left\{v\in M_\K^0 : |\Delta|_v<1\right\}.
$$

The sets~$T_5$ and~$T_6$ will be defined under the assumptions
\begin{gather}
\label{eallin}
\QQ\subset \CC(\K), \\
\label{erootsin}
\text{$\K$ contains $e_Q$-th roots of unity for all ${Q\in \QQ}$}. 
\end{gather}
Notice that~\eqref{eallin} implies that 
\begin{equation}
\label{eaain}
\AA\subset \K.
\end{equation}

Now fix ${Q\in \CC(\K)}$ and define the sets~$T_5^{(Q)}$ and~$T_6^{(Q)}$  using the Puiseux expansions of~$y$ at ${Q\in\QQ}$. As in~(\ref{epuis}), we denote by~$a_k^{(Q)}$ the coefficients of these expansions; by~\eqref{erootsin} we may assume that these coefficients are in~$\K$. 
Now define
\begin{align*}
T_5^{(Q)}=\left\{v\in M_\K^0 : \text{$\bigl|a_k^{(Q)}\bigr|_v>1$ for   some~$k$}\right\},\qquad
T_5=\bigcup_{Q\in \QQ}T_5^{(Q)}.
\end{align*}
The Eisenstein theorem implies that the set~$T_5^{(Q)}$ is finite.

Finally, put
$$
T_6^{(Q)}=\Ess(y_1^{(Q)}), \qquad
T_6=\bigcup_{Q\in \QQ}T_6^{(Q)}.
$$
where $\Ess(y)$ is defined in Subsection~\ref{ssess} (just before Proposition~\ref{pessen}) as the set of places ${v\in M_\K}$ such that ${|a_k|_\barv<1}$ for some essential coefficient~$a_k$.

Recall that a point ${P\in \CC(\bark)}$ is \textsl{semi-defined over~$\K$} if ${\xi=x(P)\in \PP^1(\K)}$.
Let ${P,Q\in \CC(\bark)}$ be semi-defined over~$\K$, let  ${v\in M_\K}$ be a finite place of~$\K$ 
and~$\pi$ a primitive element of the local ring~$\OO_v$. 
Define
\begin{equation}
\label{eell}
\ell(P,Q,v)=\frac{\log |\xi-\alpha|_v}{\log |\pi|_v}=\ord_\pi(\xi-\alpha),
\end{equation}
where, as above, ${\xi=x(P)}$ and ${\alpha=x(Q)}$.  (Since both~$P$ and~$\Q$ are semi-defined over~$\K$, we have ${\xi,\alpha \in \K}$.)

Now we are ready to state the principal results of this section. 
 We call a point~$P$ \textsl{finite} if it  is not a pole of~$x$.

\begin{proposition}
\label{pclose}
Let~$\QQ$ be the  set defined above, and assume~\eqref{eallin},~\eqref{erootsin}. Let ${P\in \CC(\bark)\smallsetminus\QQ}$ be  a finite point  semi-defined over~$\K$, and  ${v\in M_\K}$ a finite place of~$K$.   Assume that $|\xi|_{v} \le 1$ (we again put ${\xi=x(P)}$) and that 
${v \notin T_2\cup T_3\cup T_4 \cup T_5}$. Let ${\barv \in M_\bark}$, extending~$v$, be such that its restriction to the field $\K(P)$ is ramified over~$\K$. Then  our point~$P$   is $\barv$-adically close to some (unique) ${Q \in \QQ}$. 
\end{proposition}

\begin{proposition}
\label{pram}
Let ${P\in \CC(\bark)}$ be a finite point semi-defined over~$\K$, and assume that~$P$
is  $\barv$-adically close to some ${Q\in \CC(\K)}$ for some finite place ${\barv\in M_\bark}$.  Let~$v$ and~$w$ be the restrictions of~$\barv$ to~$\K$ and $\K(P)$, respectively. Assume that~$v$ does not belong to ${T_1\cup T_5^{(Q)}\cup T_6^{(Q)}}$, and that~$\K$ contains the $e_Q$-th roots of unity. Then the ramification index of~$w$ over~$v$ is equal to $e_Q/(\gcd(e_Q,\ell)$, where 
${\ell=\ell(P,Q,v)}$ is defined in~(\ref{eell}). 
\end{proposition}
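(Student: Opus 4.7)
The plan is to base-change to the maximal unramified extension of $\K_v$ and then apply local Kummer theory. Let $E=\widehat{\K_v^{\mathrm{un}}}$ denote the completion of the maximal unramified extension of $\K_v$, equipped with the extension of $v$ (still denoted $v$) normalized so that $v(\pi)=1$. Because $\xi=x(P)\in\K$, we have $\K(P)=\K(y(P))$, so $L_w=\K_v(y(P))$; since unramified base change preserves ramification indices, $e(w/v)=[E(y(P)):E]$, and the task is to compute this local degree.

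Fix $\tau\in\bark_v$ with $\tau^{e}=\xi-\alpha$ (where $e=e_Q$) compatibly with the closeness hypothesis, so that $y(P)=\sum_k a_k^{(Q)}\zeta_e^{ik}\tau^k$ $\barv$-adically; set $b_k=a_k^{(Q)}\zeta_e^{ik}$. Since $v\notin T_1$ we have $p(v)>n\ge e$, hence $\mu_e\subset E$; since $v\notin T_6^{(Q)}$, the field generated by the $a_k^{(Q)}$ is unramified over $\K$ at $v$ and embeds in $E$; and $v\notin T_5^{(Q)}$ gives $b_k\in\OO_E$. Because $p(v)\nmid e$ and $E$ has algebraically closed residue field, every unit of $E$ is an $e$-th power, so tame Kummer theory yields $[E(\tau):E]=e/\gcd(e,\ell)=:e'$; the Galois group acts by $\tau\mapsto\omega\tau$ for a primitive $e'$-th root of unity $\omega\in E$. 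Moreover $\tau^{e'}$ is Galois-invariant and hence lies in $E$, so $\{1,\tau,\ldots,\tau^{e'-1}\}$ is an $E$-basis of $E(\tau)$.

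Regrouping the series by residue modulo $e'$, $y(P)=\sum_{j=0}^{e'-1}B_j\tau^j$ with $B_j=\sum_{k\equiv j\,(e')}b_k(\tau^{e'})^{(k-j)/e'}\in E$. Applying $\sigma^r$ gives $\sigma^r(y(P))=\sum_j B_j\omega^{rj}\tau^j$, so by $E$-linear independence of $\{\tau^j\}$, $\sigma^r$ stabilises $y(P)$ iff $B_j=0$ for every $j$ with $e'\nmid rj$. A short calculation yields $[E(y(P)):E]=e'/\gcd(e',D')$, where $D'=\gcd\{j\in\{1,\ldots,e'-1\}:B_j\ne 0\}$ (with $\gcd(\emptyset)=e'$); it remains to show $\gcd(e',D')=1$.

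This last step is the main technical obstacle, and it is precisely where $T_7^{(Q)}$ is used. For each prime $q\mid e'$ consider the $q$-essential index $\kappa(q)$ and set $j(q)=\kappa(q)\bmod e'\in\{1,\ldots,e'-1\}$; since $q\mid e'$ and $q\nmid\kappa(q)$, we get $q\nmid j(q)$. Any $k\equiv j(q)\pmod{e'}$ with $k<\kappa(q)$ and $a_k^{(Q)}\ne 0$ would, by minimality of $\kappa(q)$, satisfy $q\mid k$, whence $q\mid j(q)$, a contradiction; so $\kappa(q)$ is the smallest contributing index in $B_{j(q)}$. The assumption $v\notin T_5^{(Q)}\cup T_7^{(Q)}$ forces $|b_{\kappa(q)}|_{\barv}=1$, while every further term $b_{\kappa(q)+me'}(\tau^{e'})^{(\kappa(q)-j(q))/e'+m}$ has valuation larger by at least $m\cdot v_E(\tau^{e'})=m\ell/\gcd(e,\ell)\ge 1$; the leading term thus survives and $B_{j(q)}\ne 0$. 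Hence $q\nmid D'$ for every prime $q\mid e'$, giving $\gcd(e',D')=1$ and $e(w/v)=e'=e_Q/\gcd(e_Q,\ell)$, as claimed.
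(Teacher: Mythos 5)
Your argument is correct, and every exclusion in the hypothesis is used for the same purpose as in the paper ($T_1$ for tameness and for the roots of unity, $T_6^{(Q)}$ to put $\alpha$ and the coefficients into an unramified extension, $T_5^{(Q)}$ and $T_7^{(Q)}$ to control the valuations of the $a_k^{(Q)}$ and of the essential coefficients), but the way you extract the ramification index is genuinely different from the paper's. The paper stays inside $\K_v$ (enlarged by the unramified extension generated by $\alpha$ and the $a_k^{(Q)}$): it notes ${\K_v(\eta)\subset\K_v\bigl(\sqrt[e]{\xi-\alpha}\bigr)}$ with ${\eta=y(P)}$, quotes from \cite{Bi97} that the latter field has ramification exactly ${e'=e/\gcd(e,\ell)}$, and then excludes ramification dividing $e'/q$ by contradiction: it truncates the Puiseux series at the $q$-essential index, observes that the tail $\theta$ lies in $\K_v\bigl(\eta,\sqrt[e/q]{\xi-\alpha}\bigr)$, and uses ${|a_{\kappa(q)}|_v=1}$ to get ${\ord\theta=\kappa\ell/e}$, which forces the impossible divisibility ${\kappa\ell'/q\in\Z}$. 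You instead pass to ${E=\widehat{\K_v^{\mathrm{un}}}}$, make $E(\tau)/E$ cyclic of order $e'$ by tame Kummer theory, split the series into residue classes modulo $e'$, and compute the stabilizer of $y(P)$ exactly; the essential coefficients enter only to show ${B_{j(q)}\ne0}$, which is the same leading-term valuation count in another guise. Your route buys an exact degree computation in one pass (no separate upper bound, no argument by contradiction, no need of the observation ${q\nmid\ell'}$, and no appeal to Proposition~3.3 of \cite{Bi97}), at the price of the extra apparatus of $\widehat{\K_v^{\mathrm{un}}}$, the explicit Galois action and a rearrangement of the series; the paper's proof is shorter and more elementary in its prerequisites. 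One shared gloss: like the paper, which writes ${\K_v(P)=\K_v(\eta)}$, you identify $\K(P)$ with $\K(y(P))$; this is justified where the proposition is applied, namely after Proposition~\ref{pclose}, when ${\xi\notin\AA}$ and $(\xi,\eta)$ is a non-singular point of the plane model.
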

Intuitively, the last condition means that the ``arithmetic ramification is determined by the geometric ramification''.

\begin{proposition}
\label{pquant}
Assume~\eqref{eallin} and~\eqref{erootsin}.
Put
${T=T_1\cup T_2\cup\ldots\cup T_6}$.
Assume further that 
\begin{equation}
\label{eunramf0}
\text{the covering ${\CC\stackrel x\to \PP^1}$ does not ramify over the roots of $f_0(X)$}.
\end{equation} 
Then
$$
\vheight(T) \le  52mn^2 \bigl(\pheight(f) + 2m+2n\bigr).
$$
\end{proposition}

Finally, if we do \textsl{not} assume~\eqref{eallin} and~\eqref{erootsin}, then we have to estimate the smallest extension of~$K$ satisfying~\eqref{eallin} and~\eqref{erootsin}.

\begin{proposition}
\label{pcompositum}
Let~$\L$ be the compositum of the fields $\K(Q)$ and the fields generated over~$\K$ by $e_Q$-th  roots of unity, for all ${Q\in\QQ}$.
 Then
\begin{equation}
\label{eparlk}
\partial_{\L/\K}\le 105 mn^2\bigl(\pheight(f)+2m+2n\bigr). 
\end{equation}
\end{proposition}

\subsection{Proof of Proposition~\ref{pclose}}
We fix, once and for all, a finite place ${v\in M_\K}$, its extension ${\barv\in M_\bark}$, and a point ${P\in \CC(\bark)}$ semi-defined over~$\K$ and such that ${\xi=x(P)\notin\AA}$. We assume that ${|\xi|_v\le 1}$, that ${v\notin T_2\cup\ldots\cup T_5}$ and that the restriction~$w$ of $\barv$
 to $\K(P)$ is ramified over~$\K$. We shall prove that~$P$ is $\barv$-adically close to a unique ${Q\in \QQ}$, which depends on~$w$, but not on~$\barv$.

Since ${v\notin T_2\cup T_3}$, the polynomial $R(X)$ belongs to $\OO_v[X]$ and is $v$-monic. Lemma~\ref{lfcap} implies that so is its radical $\hatr(X)$. Also, every root~$\alpha$ of~$R$ is a $v$-adic integer.   

Put ${\eta =y(P)}$. Since ${\xi\notin \AA}$, the point $(\xi, \eta)$ of the plane curve ${f(X,Y)=0}$ is non-singular, which implies that ${\K(P)=\K(\xi, \eta)=\K(\eta)}$ (recall that ${\xi\in \K}$). Now Lemma~\ref{lram} implies that ${\left|f_Y'(\xi,\eta)\right|_\barv<1}$. It follows that ${|R(\xi)|_v<1}$, which implies that ${|\hatr(\xi)|_v<1}$ by Lemma~\ref{lfcap}.

Next, since ${v\notin T_4}$, we have ${|\hatr'(\xi)|_v=1}$. Lemma~\ref{lhens} implies now that there exists a unique ${\alpha \in \AA}$ such that ${|\xi-\alpha|_\barv<1}$. 

Fix this~$\alpha$ from now on. There is
${\sum_{x(Q)=\alpha}e_Q=n}$  Puiseux expansions of~$y$ at the points~$Q$ above~$\alpha$, and they satisfy
$$
f(x,Y)= f_0(x)\prod_{x(Q)=\alpha}\prod_{j=1}^{e_Q}\left(Y-y^{(Q)}_j\right).
$$
Since ${v\notin T_5}$, each of the series $y^{(Q)}_j$ has $v$-adic convergence radius at least~$1$. Since ${|\xi-\alpha|_\barv<1}$, all them $\barv$-adically  converge at~$\xi$. Moreover, the convergence is absolute, because~$\barv$ is non-archimedean. Hence 
$$
f(\xi,Y)= f_0(\xi)\prod_{x(Q)=\alpha}\prod_{j=1}^{e_Q}\left(Y-y^{(Q)}_j(\xi)\right).
$$
Since ${R(\xi)\ne 0}$, we have ${f_0(\xi)\ne 0}$ as well. Hence we have on the left and on the right polynomials of degree~$n$ in~$Y$, the polynomial on the left having ${\eta=y(P)}$ as a simple root (here we again use that ${R(\xi)\ne 0}$). Hence exactly one of the sums $y^{(Q)}_j(\xi)$ is equal to~$\eta$. We have proved that~$P$ is $\barv$-adically close to exactly one ${Q\in \QQ}$. \qed

\subsection{Proof of Proposition~\ref{pram}}

We may assume, by re-defining the root $\sqrt[e]{\xi-\alpha}$ that ${\eta=y(P)}$ is the sum of ${y_1^{(Q)}}$ at~$\xi$. In the sequel we omit reference to~$Q$ (when it does not lead to confusion) and write~$e$ for~$e_Q$, $a_k$ for $a_k^{(Q)}$, etc. Thus, we have, in the sense of $\barv$-adic convergence,
\begin{equation}
\label{eetav}
\eta=\sum_{k= -k^{(Q)}}^\infty a_k\left(\sqrt[e]{\xi-\alpha}\right)^k.
\end{equation}
Let~$v$ and~$w$ be the restrictions of~$\barv$ to~$\K$ and $\K(P)$, respectively. We assume that~$v$ does not belong to ${T_1\cup T_5^{(Q)}\cup T_6^{(Q)}}$.  Put
$$
e'= \frac e{\gcd(e,\ell)}, \qquad \ell'= \frac \ell{\gcd(e,\ell)},
$$
where ${e=e_Q}$ and ${\ell=\ell(P,Q,v)}$ is defined in~(\ref{eell}). We have to show that the ramification index of~$w$ over~$v$ is equal to~$e'$. 

Recall that by the assumption  ${Q\in \CC(\K)}$ and~$\K$ contains $e$-th roots of unity. It follows that ${\alpha =x(Q)\in \K}$ and that~$\K$ contains the coefficients  of the Puiseux expansions of~$y$ at~$Q$. 

Let~$\K_v$ be a $v$-adic completion of~$\K$. We consider~$\bark_\barv$ as its algebraic closure, and the fields ${\K_v(P)=\K_v(\eta)}$ and ${\K_v\left(\sqrt[e]{\xi-\alpha}\right)}$ as subfields of the latter. According to~(\ref{eetav}), we have ${\K_v(\eta)\subset \K_v\left(\sqrt[e]{\xi-\alpha}\right)}$. The latter field has ramification~$e'$ over~$\K_v$ by item~\ref{irad} of Lemma~\ref{lramm}. (The assumption ${v\notin T_1}$ implies that~$e$ is not divisible by the characteristic of the residue field.)

Assume that the ramification of $\K_v(\eta)/\K_v$ is not~$e'$. Then there exists a prime divisor~$q$ of~$e'$ such that the ramification index of $\K_v(\eta)/\K_v$ divides $e'/q$. We want to show that this is impossible.

Let~$\kappa$ be the smallest~$k$ with the properties ${a_k\ne 0}$ and ${q\nmid \kappa}$.  Then~$\kappa$ is an \textsl{essential index} of the series~$y_1$ as defined in Subsection~\ref{ssess}, and~$a_\kappa$ is an essential coefficient. Put
$$
\theta = \eta-\sum_{k=k^{(Q)}}^{\kappa-1} a_k\left(\sqrt[e]{\xi-\alpha}\right)^k = a_\kappa\left(\sqrt[e]{\xi-\alpha}\right)^\kappa+ \sum_{k=\kappa+1}^\infty a_k\left(\sqrt[e]{\xi-\alpha}\right)^k.
$$
By the definition of~$\kappa$, we have 
${\theta \in \K_v\left(\eta, \sqrt[e/q]{\xi-\alpha}\right)}$. The ramification of $\K_v\left( \sqrt[e/q]{\xi-\alpha}\right)/\K_v$ is ${(e/q)/\gcd(e/q, \ell)}$ (we again use item~\ref{irad} of Lemma~\ref{lramm}). 
Since~$q$ divides $e'$, it cannot divide~$\ell'$, and we have ${\gcd(e/q, \ell)=\gcd(e,\ell)}$, which implies that ${(e/q)/\gcd(e/q, \ell)=e'/q}$.

Thus, the ramification of $\K_v\left( \sqrt[e/q]{\xi-\alpha}\right)/\K_v$ is $e'/q$, and the ramification  of $\K_v(\eta)/\K_v$ divides $e'/q$. Item~\ref{icompos} of Lemma~\ref{lramm} now implies that the ramification of $\K_v\left(\eta, \sqrt[e/q]{\xi-\alpha}\right)/\K_v$ is $e'/q$. Hence the ramification of $\K_v(\theta)/\K_v$ divides $e'/q$, which implies that ${\ord_\pi\theta\in (q/e')\Z}$.

But, since ${v\notin T_5^{(Q)}\cup T_6^{(Q)}}$, we have ${|a_k|_v\le 1}$ for all~$k$ and ${|a_\kappa|_v=1}$, which implies that 
${|\theta|_v=\left|\left(\sqrt[e]{\xi-\alpha}\right)^\kappa\right|_v}$. It follows that 
$$
\ord_\pi\theta= \frac\kappa e\ord_\pi(\xi-\alpha)=  \frac{\kappa\ell} e= \frac {\kappa\ell'}{e'} . 
$$
We have proved that ${\kappa\ell/e' \in (q/e')\Z}$. 
But~$q$ does not divide any of the numbers~$\kappa$ and~$\ell'$, a contradiction. \qed

\subsection{Proof of Proposition~\ref{pquant}}
The proposition is a direct consequence of the estimates
\begin{align}
\label{et1}
\vheight(T_1)& \le 1.02n,\\
\label{et2}
\vheight(T_2) & \le \pheight(f),\\
\label{et3}
\vheight(T_3) & \le (2n-1)\bigl(\pheight(f) + m\log2+ \log(2n^2)\bigr),\\
\label{et4}
\vheight(T_4) & \le 16mn^2 \bigl(\pheight(f) + 2m +2\log n\bigr),\\
\label{et5}
\vheight(T_5) & \le 14mn^2\bigl(\pheight(f)+2m+2n\bigr),\\
\label{et6m}
\vheight\bigl(T_6) & \le 18mn^2\bigl(\pheight(f)+2m+\log n\bigr).
\end{align}

\begin{remark}
Assumption~\eqref{eunramf0} is used only in the proof of~\eqref{et6m}.
\end{remark}

\paragraph{Proof of~(\ref{et1})}
Obviously,
${\vheight(T_1) \le \sum_{p\le n}\log p}$, which is bounded by
$1.02n$ according to~\eqref{ethetax}. \qed

\paragraph{Proof of~(\ref{et2})}
Item~\ref{idenum} of Proposition~\ref{pvhei} implies that ${\vheight(T_2)\le \aheight(f)}$.  Since ${\aheight(f)=\pheight(f)}$ by~\eqref{epah}, the result follows.\qed

\paragraph{Proof of~(\ref{et3})}
Item~\ref{idenum} of Proposition~\ref{pvhei} and Lemma~\ref{lheires} imply that
\begin{equation}
\label{er0}
\vheight(T_3)\le\aheight(r_0)\le \aheight(R) \le (2n-1)\aheight(f) + (2n-1)\left(\log(2n^2)+ m\log2\right).
\end{equation}
Again using~(\ref{epah}),  we have the result.  \qed

\paragraph{Proof of~(\ref{et4})}
We have
${\deg \hatr\le \deg R\le (2n-1)m}$. Further, using Corollary~\ref{cdivi} and inequalities~(\ref{er0}), we find
$$
\aheight(\hatr) \le \pheight(R)+\aheight(r_0)+\deg R \leq (4n-2)\aheight(f)+(8n-4)\left(\log n+m\right).
$$
Finally, using Remark~\ref{rresx} and the previous estimates, we obtain
$$
\vheight(T_4)\le \aheight(\Delta) \le (2\deg\hatr-1)\left(\aheight(\hatr)+ \log(2(\deg\hatr)^2)\right)
\le 16mn^2 \aheight(f) + 32mn^2\left(\log n+m\right).
$$
Using~(\ref{epah}),  we obtain the result.  \qed

\paragraph{Preparation for the proofs of~(\ref{et5}) and~(\ref{et6m})}
Recall that we denote by $R(X)$ the $Y$-resultant of $f(X,Y)$ and $f'_Y(X,Y)$ and by~$\AA$ the set of the roots of ${R(X)}$. Then
\begin{gather}
\label{ecardaa}
|\AA|\le\sum_{\alpha \in \AA} \ord_\alpha R(X)\le \deg R(X)\le m(2n-1),\\
\sum_{\alpha\in \AA}\aheight(\alpha) \le \sum_{\alpha\in \AA}\ord_\alpha R(X)\aheight(\alpha)
\le \pheight(R)+\log(2mn)
\label{esumheal}
\le (2n-1)\pheight(f)+3n\log (4mn),
\end{gather}
where for~(\ref{esumheal}) we use Remark~\ref{rheiroots} and Lemma~\ref{lheires}. 
Using  the notation
${f^{(\alpha)}(X,Y)=f(X+\alpha,Y)}$
and Corollary~\ref{cfalpha}, we obtain the  inequality
\begin{equation}
\label{esumfal}
\sum_{\alpha\in \AA}\aheight(f^{(\alpha)})\le\sum_{\alpha\in \AA}\ord_\alpha R(X)\aheight(f^{(\alpha)}) \le 4mn\pheight(f)+7 m^2 n + 3nm\log n.
\end{equation}

\paragraph{Proof of~(\ref{et5})}  
Fix ${\alpha \in \K}$. The height of the set 
${T_5^{(\alpha)}=\bigcup_{x(Q)=\alpha}T_5^{(Q)}}$
can be estimated using  item~\ref{ibigv} of Proposition~\ref{peisen} with polynomial $f^{(\alpha)}$ instead of~$f$. We obtain
\begin{equation}
\label{et5alpha}
\vheight\bigl( T_5^{(\alpha)}\bigr)\le  3n\bigl(\pheight(f^{(\alpha)})+\log(mn)+1).
\end{equation}
The set~$T_5$ is contained in the union of all $T_5^{(\alpha)}$ with ${\alpha \in \AA}$. Hence combining~\eqref{ecardaa},~\eqref{esumfal} and~\eqref{et5alpha}, we obtain
\begin{align*}
\vheight(T_5) &\le 3n\left(\sum_{\alpha\in \AA}\pheight(f^{(\alpha)})+(\log(mn)+1)|\AA|\right)
\le14mn^2\bigl(\pheight(f)+2m+2n\bigr),
\end{align*}
as wanted.\qed

\paragraph{Proof of~(\ref{et6m})}
It is totally analogous to the proof of~\eqref{et5}.  We define 
${T_6^{(\alpha)}= \bigcup_{x(Q)=\alpha}\Ess(y^{(Q)})}$. If the set $T_6^{(\alpha)}$ is non-empty then  the covering ${\CC\stackrel x\to \PP^1}$ ramifies over~$\alpha$, and condition~\eqref{eunramf0} implies that ${f_0^{(\alpha)}(0)=f_0(\alpha)\ne 0}$. Hence we may apply~\eqref{ehess} with~$f^{(\alpha)}$ instead of~$f$. We obtain
$$
\vheight\bigl(T_6^{(\alpha)}\bigr)\le n\bigl( \pheight(f^{(\alpha)})+2\bigl)+ 3n\bigl(\pheight(f^{(\alpha)}) + 4n +\log m \bigr)\ord_\alpha D(X)
$$ 
Next, we use~\eqref{ecardaa} and~\eqref{esumfal} to obtain
\begin{align*}
\vheight(T_6) &\le n\bigl( \sum_{\alpha\in \AA}\pheight(f^{(\alpha)})+2|\AA|\bigl)
+ 3n\left(\sum_{\alpha\in \AA}\ord_\alpha R(X)\pheight(f^{(\alpha)}) + (4n +\log m)\sum_{\alpha\in \AA}\ord_\alpha R(X) \right)\\
&\le 18mn^2\bigl(\pheight(f)+2m+\log n\bigr),
\end{align*}
as wanted. This completes the proof of Proposition~\ref{pquant}. \qed

\subsection{Proof of Proposition~\ref{pcompositum}}
We have 
\begin{equation}
\partial_{\L/\K}\le \sum_{\alpha\in \AA}\partial_{\K(\alpha)/\K}+ \sum_{\alpha\in \AA} \sum_{x(Q)=\alpha}\partial_{\K(\alpha)(Q)/\K(\alpha)}+\sum_{r=1}^n\partial_{\K(\zeta_r)/\K},
\end{equation}
where~$\zeta_r$ is a primitive $r$-th root of unity.

Each ${\alpha \in \AA}$ generates over~$\K$ a field of degree at most ${\deg R(X)\le 2mn}$. Lemma~\ref{lsilv} and estimate~\eqref{esumheal} imply that 
$$
\sum_{\alpha\in \AA}\partial_{\K(\alpha)/\K} \le 4mn\sum_{\alpha\in \AA}\aheight(\alpha) +2mn\log(2mn) \le 8mn^2\pheight(f)+ 14mn^2\log(4mn). 
$$
The field $\K(\alpha)(Q)$ is contained in the field generated over~$\K(\alpha)$ by the coefficients of the Puiseux expansions of~$y$ at~$Q$. Using item~\ref{isumli} of Proposition~\ref{peisen}, but with polynomial\footnote{Recall that ${f^{(\alpha)}(X,Y)=f(X+\alpha, Y)}$.}~$f^{(\alpha)}$ instead of~$f$, we obtain 
$$
 \sum_{x(Q)=\alpha}\partial_{\K(\alpha)(Q)/\K(\alpha)}\le 8n \bigl(\ord_\alpha D(X)+1\bigr)\bigl(\pheight(f^{(\alpha)}) + 5n +\log m\bigr).
$$
Hence, applying~\eqref{esumfal}, we obtain%
\begin{align*}
\sum_{\alpha\in \AA} \sum_{x(Q)=\alpha}\partial_{\K(\alpha)(Q)/\K(\alpha)} &\le 8n\left( \sum_{\alpha\in \AA}\ord_\alpha R(X)\pheight(f^{(\alpha)}) +\sum_{\alpha\in \AA}\pheight(f^{(\alpha)})\right)\\
&\hphantom{\le}+ 8n\left(\sum_{\alpha\in \AA}\ord_\alpha R(X) +|\AA|\right)\bigl( 5n +\log m\bigr)\\
&\le 64mn^2\pheight(f)+144m^2n^2+208mn^3\bigr.
\end{align*}
Finally, Lemma~\ref{lsilv} implies that 
$$
\sum_{r=1}^n\partial_{\K(\zeta_r)/\K} \le \sum_{r=1}^n \log r \le n\log n. 
$$
Combining all this, we obtain~\eqref{eparlk}. \qed

\section{A Tower of $\bark$-Points}
\label{stower}

In this section we retain the set-up of Section~\ref{sprox}; that is, we fix a number field~$\K$, a curve~$\CC$ defined over~$\K$ and rational functions ${x,y\in \K(\CC)}$ such that ${\K(\CC)=\K(x,y)}$. Again, let ${f(X,Y)\in \K[X,Y]}$ be the $\K$-irreducible polynomial of $X$-degree~$m$ and $Y$-degree~$n$ such that ${f(x,y)=0}$, and we again assume that $f_0(X)$ in~\eqref{efxy} is monic. We again define  the polynomial $R(X)$, the sets  ${\AA\subset \bark}$, ${\QQ\subset\CC(\bark)}$ and ${T_1,\ldots, T_6\subset M_\K}$, etc. 

We also fix a covering ${\tilcc \stackrel\phi\to\CC}$ of~$\CC$ by another smooth irreducible projective curve~$\tilcc$; we assume that both~$\tilcc$ and the covering~$\phi$ are defined over~$\K$. We consider $\K(\CC)$ as a subfield of $\K(\tilcc)$; in particular, we identify the functions ${x\in \K(\CC)}$ and ${x\circ\phi\in \K(\tilcc)}$.  We fix a function ${\tily \in \K(\tilcc)}$ such that ${K(\tilcc)=\K(x,\tily )}$. We let ${\tilf(X,\tilY ) \in \K[X,\tilY ]}$ be an irreducible polynomial of $X$-degree~$\tilm$ and $\tilY $-degree~$\tiln$ such that ${\tilf(x,\tily )=0}$; we write 
\begin{equation*}
\tilf(X,\tilY )= \tilf_0(X)\tilY ^\tiln+\tilf_1(X)\tilY ^{\tiln-1}+ \cdots+ \tilf_\tiln(X)
\end{equation*}
and assume that the polynomial $\tilf_0(X)$ is monic. We define in the similar way the polynomial $\tilr(X)$, the sets  ${\tilaa\subset \bark}$, ${\tilqq\subset\tilcc(\bark)}$ and ${\tilt_1,\ldots, \tilt_6\subset M_\K}$, etc. For defining~$\tilt_5$ and~$\tilt_6$ we need to assume that 
\begin{gather}
\label{eallintil}
\tilqq\subset \tilcc (\K), \\
\label{erootsintil}
\text{$\K$ contains $e_\tilq$-th roots of unity for all ${\tilq\in \tilqq}$}. 
\end{gather}

 We also define the notion of proximity on the curve~$\tilcc$ exactly in the same way as we did it for~$\CC$ in Definition~\ref{dprox}, and we have the analogues of Propositions~\ref{pclose},~\ref{pram} and~\ref{pquant}.

In addition to all this, we define one more finite set of places of the field~$\K$ as follows. Write
${\tilr(X)=\tilr_1(X)\tilr_2(X)}$, where the polynomials ${\tilr_1(X),\tilr_2(X)\in \K(X)}$ are uniquely defined by the following conditions:

\begin{itemize}
\item
the roots of $\tilr_1(X)$ are contained in the set of the roots of $f_0(X)$;

\item
the polynomial $\tilr_2(X)$ has no common roots with $f_0(X)$ and is monic.
\end{itemize}
Now let~$\Theta$ be the resultant of $f_0(X)$ and $\tilr_2(X)$. Then ${\Theta \ne 0}$ by the definition of $\tilr_2(X)$, and we set
$$
U=\{v\in M_\K : |\Theta|_v<1\}. 
$$

\begin{proposition}
\label{ptower}
Assume~\eqref{eallin},~\eqref{erootsin},~\eqref{eallintil} and~\eqref{erootsintil}. 
Let ${P\in \CC(\bark)}$ be semi-defined over~$\K$ (that is, ${\xi=x(P) \in \K}$), and let ${\tilp\in \tilcc(\bark)}$ be a point above~$P$ (that is, ${\phi(\tilp)=P}$). Let~$v$ be a finite place of~$\K$, and~$\barv$ an extension of~$v$ to~$\bark$. Assume that~$\tilp$ is $\barv$-close to some ${\tilq\in \tilqq}$. Then we have one of the following options.

\begin{itemize}

\item
$|\xi|_{v} > 1$.

\item
$v\in T\cup\tilt\cup U$.

\item
$P$ is $\barv$-adically close to the ${Q\in \CC(\bark)}$ which lies below~$\tilq$.

\end{itemize}

\end{proposition}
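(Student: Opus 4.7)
The plan is to verify the two conditions of Definition~\ref{dprox} for $P$ being $\barv$-adically close to $Q=\phi(\tilq)$. Since $x(Q) = x(\tilq) = \alpha$ and $x(P) = x(\tilp) = \xi$, the first condition $|\xi-\alpha|_\barv<1$ is immediate from the closeness of $\tilp$ to $\tilq$. The substantive content is the second condition: the $\barv$-adic convergence of the Puiseux expansions of $y$ at $Q$ evaluated at $\xi$, together with the equality of one of the resulting values with $y(P)$.

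The key observation is that $y$, viewed in $\K(\tilcc)$ via $\K(\CC)\subset\K(\tilcc)$, has a Puiseux expansion at $\tilq$ (in fractional powers of $x-\alpha$ with denominator $\tile_{\tilq}=e_Q\cdot e_\phi$) that agrees with the Puiseux expansion of $y$ at $Q$ (with denominator $e_Q$) under the natural reindexing $(x-\alpha)^{1/e_Q}=\bigl((x-\alpha)^{1/\tile_{\tilq}}\bigr)^{e_\phi}$. Convergence and value-matching at $Q$ is therefore equivalent to the same statement for the Puiseux of $y$ at $\tilq$.

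Since $\tilq\in\tilqq$, $\tilr(\alpha)=0$, and we distinguish two cases. \textbf{Case A:} $\tilr_1(\alpha)=0$. Then $f_0(\alpha)=0$ by definition of $\tilr_1$; since the Sylvester determinant $R(X)$ vanishes whenever its formal leading coefficient $f_0$ does, we get $R(\alpha)=0$, hence $\alpha\in\AA$ and $Q\in\QQ$. Arguing as in Proposition~\ref{pclose}, $v\notin T_5\subseteq T$ forces all Puiseux coefficients $a_k^{(Q)}$ to be $\barv$-adic integers, so the series converge at $\xi$ by $|\xi-\alpha|_\barv<1$, with value $y(\tilp)=y(P)$. \textbf{Case B:} $\tilr_2(\alpha)=0$. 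Then $f_0(\alpha)\ne 0$, and we use $v\notin U$: since $\tilr_2$ is monic with $\barv$-integer roots (by $v\notin\tilt_2\cup\tilt_3$), the identity $\Theta=\pm\prod_{\tilr_2(\beta)=0}f_0(\beta)$ combined with $|\Theta|_v=1$ and integrality of each $f_0(\beta)$ forces each $|f_0(\beta)|_\barv=1$, in particular $|f_0(\alpha)|_\barv=1$. Here $Q$ may fail to lie in $\QQ$, so the direct Eisenstein control via $T_5$ is unavailable; instead we route through the covering. Using the integrality of $f_0(x)y$ over $\K[x]$ (obtained by multiplying $f(x,y)=0$ by $f_0(x)^{n-1}$), we express $y$ as a $\K[x]$-polynomial in $\tily$ divided by a power of $f_0(x)$. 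Substituting the convergent Puiseux of $\tily$ at $\tilq$ (whose coefficients are $\barv$-integers by $v\notin\tilt_5$) into this expression and dividing by $f_0(x)^c$, which is a $\barv$-unit at $\alpha$, yields a convergent Puiseux for $y$ at $\tilq$ (equivalently, at $Q$) at $\xi$ with value $y(\tilp)=y(P)$.

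The main obstacle is Case B: when $Q\notin\QQ$, the proof must bypass the $T_5$ bound for $y$ at $Q$ and instead route convergence through $\tily$ and the covering $\phi$. The set $U$ is calibrated precisely so that $|f_0(\alpha)|_\barv=1$ in this situation, allowing the denominators $f_0(x)^c$ to be cleared.
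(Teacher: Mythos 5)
Your overall strategy (pass through the covering, use the integrality of $f_0(x)y$ to express $y$ in terms of $x,\tily$, and use $U$ to control denominators) is the right one, but both of your cases have genuine gaps. In Case~A you establish only the \emph{convergence} of the series $y^{(Q)}_i$ at $\xi$ and then simply assert that one of the sums equals $y(P)$. Convergence alone does not identify the sum: a Proposition~\ref{pclose}-type argument (factoring $f(\xi,Y)$) would at best show that $y(P)$ is the sum of a Puiseux series attached to \emph{some} point of $\CC$ above $\alpha$, whereas the whole content of the proposition is that this point is the specific $Q=\phi(\tilq)$ lying below $\tilq$. Pinning that down requires relating $y$ to $\tily$ by an algebraic identity and evaluating that identity $\barv$-adically, which your Case~A never does.

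In Case~B the identity you rely on is unjustified and in general false: integrality of $f_0(x)y$ over $\K[x]$ does not let you write $y$ as a $\K[x]$-polynomial in $\tily$ divided by a power of $f_0(x)$, because the plane model $\tilf=0$ may be singular, so $\K[x,\tily]$ need not be integrally closed. What Corollary~\ref{cint} actually yields (this is Lemma~\ref{lyz} of the paper) is $y=\Phi(x,\tily)/\bigl(f_0(x)\tilr(x)\bigr)$, and in your Case~B the factor $\tilr(x)$ \emph{does} vanish at $\alpha$ (since $\tilr_2(\alpha)=0$), so the denominator is not a $\barv$-unit near $\alpha$ and your shortcut collapses. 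The paper needs no case distinction: it substitutes the convergent expansions of $\tily$ at $\tilq$ into the single identity $y=\Phi(x,\tily)/\bigl(f_0(x)\tilr(x)\bigr)$, writes $f_0(X)\tilr(X)=(X-\alpha)^r g(X)$ with $g(\alpha)\ne0$, uses $v\notin T_2\cup\tilt_2\cup\tilt_3\cup\tilt_4\cup U$ to get $|g(\alpha)|_\barv=1$, and invokes Lemma~\ref{loneoverf} so that the Laurent series of the reciprocal converges at $\xi$; this gives convergence \emph{and} the value $\Phi\bigl(x(\tilp),\tily(\tilp)\bigr)/\bigl(f_0(x(\tilp))\tilr(x(\tilp))\bigr)=y(P)$ in one stroke, which is exactly what closes both gaps above. (Your use of $U$ to force $|f_0(\beta)|_\barv=1$ at the roots $\beta$ of $\tilr_2$ is correct as far as it goes; it is the shape of the denominator, not the role of $U$, that is at fault.)
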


For the proof we shall need a simple lemma.

\begin{lemma}
\label{lyz}
In the above set-up, there exists a polynomial ${\Phi(X,\tilY )\in \K[X,\tilY ]}$ such that 
$$
y=\frac{\Phi(x,\tily )}{f_0(x)\tilr(x)}
$$
\end{lemma}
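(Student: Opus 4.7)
The plan is to use Corollary~\ref{cint} applied to the ring $R=\K[x]$, taking as the base polynomial $\tilf(x,\tilY)\in \K[x][\tilY]$ with root $\tilY=\tily$. The preliminary observation is that although $y$ itself need not be integral over $\K[x]$, the element $f_0(x)y$ is: multiplying ${f(x,y)=0}$ through by $f_0(x)^{n-1}$ yields a monic relation
$$
(f_0(x)y)^n + f_1(x)(f_0(x)y)^{n-1} + f_2(x)f_0(x)(f_0(x)y)^{n-2}+\cdots+f_n(x)f_0(x)^{n-1}=0
$$
over $\K[x]$. So $f_0(x)y$ belongs to the integral closure $\barr$ of $\K[x]$ in $\K(\tilcc)=\K(x)(\tily)$.

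Next, since $\K[x]$ is a PID (hence integrally closed), and since $\tilf(x,\tilY)$ is $\K(x)$-irreducible as a polynomial in $\tilY$ (by Gauss's lemma, using that $\tilf$ is $\K$-irreducible in $\K[X,\tilY]$), Corollary~\ref{cint} gives the inclusion
$$
\barr \subset \Delta(\tilf)^{-1}\,\K[x,\tily],
$$
where $\Delta(\tilf)$ is the discriminant of $\tilf(x,\tilY)$ viewed as a polynomial in $\tilY$ over $\K[x]$. Hence there exists $Q(X,\tilY)\in \K[X,\tilY]$ with $f_0(x)y=\Delta(\tilf)^{-1}Q(x,\tily)$.

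Finally, I invoke the standard identity relating discriminant and resultant for a non-monic polynomial: since $\tilf$ has $\tilY$-degree $\tiln$ and leading coefficient $\tilf_0(x)$, and since $\tilr(x)$ is by definition the $\tilY$-resultant of $\tilf$ and $\tilf'_\tilY$, one has $\Delta(\tilf)=\pm\,\tilr(x)/\tilf_0(x)$. Substituting and rearranging yields
$$
y \;=\; \frac{\pm\,\tilf_0(x)\,Q(x,\tily)}{f_0(x)\,\tilr(x)},
$$
so setting $\Phi(X,\tilY)=\pm\,\tilf_0(X)\,Q(X,\tilY)\in \K[X,\tilY]$ completes the proof. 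There is no real obstacle here: every step is a direct appeal to material already proved in the paper, and the main point is simply to recognise that the ``correct'' denominator predicted by integrality is $f_0(x)\tilr(x)$ after clearing the $\tilf_0(x)$ that appears in the discriminant–resultant identity.
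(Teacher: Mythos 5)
Your proposal is correct and is essentially the paper's own argument: the paper likewise observes that $f_0(x)y$ is integral over $\K[x]$ and applies Corollary~\ref{cint} to $\tilf(x,\tilY)$ to get ${f_0(x)y\in \tilr(x)^{-1}\K[x,\tily]}$. You have merely written out the details the paper leaves implicit (the monic relation for $f_0(x)y$, irreducibility over $\K(x)$ via Gauss, and the identity $\Delta(\tilf)=\pm\,\tilr(x)/\tilf_0(x)$ linking the discriminant in Corollary~\ref{cint} to the resultant $\tilr$), all of which are accurate.
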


\begin{proof}
Since $f_0(x)y$ is integral over $\K[x]$, Corollary~\ref{cint} implies that ${f_0(x)y\in \tilr(x)^{-1}\K[x,\tily ]}$,  whence the result. \qed
\end{proof}

\paragraph{Proof of Proposition~\ref{ptower}}
We put  ${\alpha= x(\tilq)}$. By the definition of the set~$\tilqq$, we have ${\alpha \in \tilaa}$. Assume that ${|\xi|_v\le 1}$ and  ${v\notin T\cup\tilt\cup U}$. Let~$\tile$ be the ramification of~$\tilq$ over~$\PP^1$, and let 
\begin{equation}
\label{ezqtil}
\tily ^{(\tilq)}_i= \sum_{k = -k^{(\tilq)}}^{\infty} a_k^{(\tilq)}\tilzeta^{(j-1)k}(x-\alpha)^{k/\tile} \qquad (j=1, \ldots, \tile),
\end{equation}
be the equivalent Puiseux expansions of~$\tily $ at~$\tilq$ (here~$\tilzeta$ is a primitive $\tile$-th root of unity). Since~$\tilp$ is $\barv$-close to~$\tilq$, we have ${|\xi-\alpha|_\barv<1}$ and the~$\tile$ series~(\ref{ezqtil}) converge at~$\xi$, with one of the sums being $\tily (\tilp)$. 

Now let  ${\Phi(X,\tilY )}$ be the polynomial from Lemma~\ref{lyz}. Then the $\tile$ series 
\begin{equation}
\label{eyz}
\frac{\Phi\bigl(x,\tily ^{(\tilq)}_j\bigr)}{f_0(x)\tilr(x)} \qquad (j=1, \ldots, \tile)
\end{equation}
contain all the equivalent  Puiseux series of~$y$ at ${Q=\phi(\tilq)}$. More precisely, if the ramification of~$Q$ over~$\PP^1$ is~$e$, then every of the latter series occurs in~(\ref{eyz}) exactly ${\tile/e}$ times. 

Write ${f_0(X)\tilr(X)=(X-\alpha)^r g(X)}$ with ${g(\alpha)\ne 0}$. 
The assumption
${v\notin T_2\cup\tilt_2\cup\tilt_3\cup\tilt_4\cup U}$ implies that ${|g(\alpha)|_\barv=1}$. Now Lemma~\ref{loneoverf} implies that the Laurent series at~$\alpha$ of the rational function ${\bigl(f_0(x)\tilr(x)\bigr)^{-1}}$ converges at~$\xi$. Hence all the series~(\ref{eyz}) converge at~$\xi$, and among the sums we find 
$$
\frac{\Phi\bigl(x(\tilp),\tily (\tilp)\bigr)}{f_0\bigl(x(\tilp)\bigr)\tilr\bigl(x(\tilp)\bigr)}= y(P). 
$$
Hence~$P$ is $\barv$-close to~$Q$.  \qed

\medskip

We shall also need a bound for~$U$ similar to that of Proposition~\ref{pquant}. 

\begin{proposition}
\label{puquant}
We have ${\vheight(U) \le\Upsilon +\Xi}$, where~$\Upsilon$ is defined in~(\ref{eom}) and
\begin{equation}
\label{exi}
\Xi = 2m\tiln(2\tilm+3\log\tiln) + (m+2\tilm\tiln)\log (m+2\tilm\tiln).
\end{equation}
\end{proposition}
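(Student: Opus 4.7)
}
Since $\Theta$ is a non-zero element of $\K$, Proposition~\ref{pvhei}(\ref{iden}) applied to $\alpha=\Theta$ gives $\vheight(U)=\vheight\bigl(\Num_\K(\Theta)\bigr)\le \aheight(\Theta)$. The whole task is therefore to bound $\aheight(\Theta)$ in terms of the heights of $f$ and $\tilf$, and everything afterwards is bookkeeping.

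To bound $\aheight(\Theta)$, I would exploit the fact that $f_0$ is monic (by our standing assumption on~$f$), which gives the clean product formula
$$
\Theta=\Res(f_0,\tilr_2)=\prod_{f_0(\beta)=0}\tilr_2(\beta),
$$
the product being taken over the roots of $f_0$ with multiplicity. Writing $m_0=\deg f_0\le m$ and $d_2=\deg\tilr_2$, a standard (absolute-value) estimate for evaluating a polynomial at an algebraic number gives $\aheight(\tilr_2(\beta))\le \aheight(\tilr_2)+d_2\aheight(\beta)+\log(d_2+1)$; summing and using Mahler (Remark~\ref{rheiroots}) together with $\pheight(f_0)\le\aheight(f_0)\le\aheight(f)=\pheight(f)$ yields
$$
\aheight(\Theta)\le m_0\aheight(\tilr_2)+d_2\bigl(\pheight(f)+\log(m_0+1)\bigr)+m_0\log(d_2+1).
$$

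Next I would bound $\aheight(\tilr_2)$. Since $\tilr_2$ is monic by construction, $\aheight(\tilr_2)=\pheight(\tilr_2)$, and since $\tilr_2$ divides $\tilr$, Corollary~\ref{cdivi}(\ref{ihp}) combined with Lemma~\ref{lheires} gives
$$
\aheight(\tilr_2)\le \pheight(\tilr)+\deg\tilr\le (2\tiln-1)\pheight(\tilf)+(2\tiln-1)\log\bigl((\tilm+1)(\tiln+1)\sqrt\tiln\bigr)+(2\tiln-1)\tilm.
$$
Using $d_2\le\deg\tilr\le 2\tilm\tiln$ and $m_0\le m$, the two dominant contributions to $\aheight(\Theta)$ are $m\cdot 2\tiln\,\pheight(\tilf)$ and $2\tilm\tiln\,\pheight(f)$, which together are exactly the quantity $\Upsilon$ defined in~\eqref{eom}.

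The remaining work is to show that the residual error terms $2m\tiln\log\bigl((\tilm+1)(\tiln+1)\sqrt\tiln\bigr)+2m\tilm\tiln+2\tilm\tiln\log(m+1)+m\log(2\tilm\tiln+1)$ fit into~$\Xi$ as defined in~\eqref{exi}. This is elementary: each piece is absorbed by one of the three summands of~$\Xi$ via the trivial estimates $\log\tilm\le\tilm$, $\log((\tilm+1)(\tiln+1)\sqrt\tiln)\le 2\tilm+3\log\tiln$, and $\log(m+1)+\log(2\tilm\tiln+1)\le\log(m+2\tilm\tiln)$. The only real obstacle, such as it is, will be choosing the splitting and the log-bounds cleanly enough that the resulting inequality matches the exact form of~$\Xi$ without loss; there is no conceptual difficulty beyond that.
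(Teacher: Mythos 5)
Your argument is correct, but it reaches the bound on $\aheight(\Theta)$ by a different device than the paper. The first step is identical: $\vheight(U)\le\aheight(\Theta)$ via item~\ref{iden} of Proposition~\ref{pvhei}. From there the paper expresses $\Theta$ as the Sylvester determinant and uses the height bound for such determinants, getting $\aheight(\Theta)\le\deg\tilr_2\,\aheight(f_0)+\deg f_0\,\aheight(\tilr_2)+(\deg f_0+\deg\tilr_2)\log(\deg f_0+\deg\tilr_2)$, and then trades affine for projective heights using monicity of $f_0$ and $\tilr_2$; you instead use monicity of $f_0$ to write $\Theta=\prod_{f_0(\beta)=0}\tilr_2(\beta)$ and combine a per-root evaluation estimate with Mahler's bound (Remark~\ref{rheiroots}) on $\sum_\beta\aheight(\beta)$. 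Both routes then invoke the very same estimate for $\pheight(\tilr_2)$ through Corollary~\ref{cdivi} and Lemma~\ref{lheires}, and both isolate the main term $\Upsilon$ with the remainder absorbed into $\Xi$; the determinant route is slightly shorter and symmetric in the two polynomials, while yours avoids the determinant height input (Lemma~\ref{lsombradet}) at the cost of Mahler plus the evaluation bound. One small slip in your bookkeeping: the stated inequality $\log(m+1)+\log(2\tilm\tiln+1)\le\log(m+2\tilm\tiln)$ is false as written; what you actually need is to bound each logarithm separately by $\log(m+2\tilm\tiln)$, giving $2\tilm\tiln\log(m+1)+m\log(2\tilm\tiln+1)\le(m+2\tilm\tiln)\log(m+2\tilm\tiln)$, which is precisely the last summand of $\Xi$, so the absorption goes through unchanged.
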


\begin{proof}
Item~\ref{idenum} of Proposition~\ref{pvhei} implies that ${\vheight (U)\le \aheight(\Theta)}$, where~$\Theta$ is the resultant of $f_0(X)$ and $\tilr_2(X)$. Expressing~$\Theta$ as the familiar determinant, we find
\begin{equation}
\label{eahtheta}
\aheight(\Theta) \le \deg\tilr_2 \aheight(f_0)+ \deg f_0\aheight(\tilr_2) + (\deg f_0+\deg\tilr_2) \log (\deg f_0+\deg\tilr_2).
\end{equation}
Since both~$f_0$ and~$\tilr_2$ are monic polynomials (by the convention~\eqref{ef0monic} and the definition of~$\tilr_2$), we may replace the affine heights by the projective heights. Further, we have the estimates
\begin{gather*}
\deg f_0\le m, \qquad \deg\tilr_2 \le \tilm (2\tiln-1), \qquad \pheight(f_0) \le \pheight(f), \\
\pheight(\tilr_2) \le (2\tiln-1)\pheight(\tilf)+ (2\tiln-1)\left(2\tilm+\log\bigl((\tiln+1)\sqrt \tiln\bigr)\right),
\end{gather*}
the latter estimate being a consequence of Corollary~\ref{cdivi} and Lemma~\ref{lheires}. 
Substituting all this to~\eqref{eahtheta}, we obtain the result.
\qed 
\end{proof}

\section{The Chevalley-Weil Theorem}
\label{sproofcw}

Now we may to gather the fruits of our hard work. In this section we retain the set-up of Section~\ref{stower}. Here is our principal result, which will easily imply all the theorems stated in the introduction. 

\begin{theorem}
\label{tmain}
Assume~\eqref{eallin},~\eqref{erootsin},~\eqref{eallintil} and~\eqref{erootsintil}. 
Assume that the covering~$\phi$ is unramified outside the poles of~$x$. Let ${P\in \CC(\bark)}$ be semi-defined over~$\K$, and let ${\tilp\in \tilcc(\bark)}$ be a point above~$P$. As before, we put ${\xi=x(P)=x(\tilp)}$. 
Then for every  ${v\in M_\K^0}$ we have one of the following options.
\begin{itemize}

\item
$|\xi|_{v} > 1$.
\item
$v\in T\cup\tilt\cup U$.

\item
Any extension of~$v$ to $\K(P)$ is unramified in $\K(\tilp)$. 

\end{itemize}
\end{theorem}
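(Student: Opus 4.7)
Suppose the first two options fail, so $|\xi|_v\le 1$ and $v\notin T\cup\tilt\cup U$. Fix an arbitrary extension $w$ of $v$ to $\K(P)$, an extension $\tilw$ of $w$ to $\K(\tilp)$, and finally an extension $\barv$ of $\tilw$ to $\bark$. We wish to prove $e(\tilw/w)=1$. Since $e(\tilw/w)$ divides $e(\tilw/v)$, it suffices to compare $e(\tilw/v)$ with $e(w/v)$ and show they are equal.

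Now apply Proposition~\ref{pclose} to the curve $\tilcc$ and the point $\tilp$ (assume for the moment $\tilp\notin\tilqq$). By our assumptions, the first two cases of that proposition are excluded. If the third case holds, then $v$ is unramified in $\K(\tilp)$; in particular $e(\tilw/v)=1$, which forces $e(\tilw/w)=1$ and we are done. Otherwise we are in the fourth case: $\tilp$ is $\barv$-adically close to a unique $\tilq\in\tilqq$. Because $|\xi|_v\le 1$ and $v\notin T\cup\tilt\cup U$, we may then invoke Proposition~\ref{ptower} to conclude that the companion point $P$ is $\barv$-adically close to $Q:=\phi(\tilq)\in\CC(\bark)$.

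Next I apply Proposition~\ref{pram} in each of the two towers. For $\tilcc$, the hypothesis $v\notin \tilt_1\cup\tilt_5^{(\tilq)}\cup\tilt_6^{(\tilq)}\cup\tilt_7^{(\tilq)}\subset\tilt$ gives
$$e(\tilw/v)=\frac{\tile_{\tilq}}{\gcd(\tile_{\tilq},\tilell)},$$
with $\tilell=\ell(\tilp,\tilq,\barv)$. For $\CC$, the hypothesis $v\notin T_1\cup T_5^{(Q)}\cup T_6^{(Q)}\cup T_7^{(Q)}\subset T$ yields
$$e(w/v)=\frac{e_Q}{\gcd(e_Q,\ell)},$$
with $\ell=\ell(P,Q,\barv)$. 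By definition~\eqref{eell} both $\ell$ and $\tilell$ depend only on $x(P)=x(\tilp)=\xi$ and $x(Q)=x(\tilq)=\alpha$, so $\ell=\tilell$. Moreover, since $\tilq$ is a finite point (as $x(\tilq)=\alpha\in\tilaa\subset\bark$, not a pole of $x$) and the covering $\phi$ is unramified outside the poles of $x$, $\phi$ is unramified at $\tilq$; multiplicativity of ramification in the composition $\tilcc\stackrel\phi\to\CC\stackrel x\to\PP^1$ at $\tilq$ therefore gives $\tile_{\tilq}=e_{\phi(\tilq)}=e_Q$. Combining these equalities, $e(\tilw/v)=e(w/v)$, hence $e(\tilw/w)=1$, as required.

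The only step I expect to require care is the edge case $\tilp\in\tilqq$, where Proposition~\ref{pclose} cannot be applied directly to $\tilp$. In that situation $\xi=\alpha\in\tilaa$, so $\tilp$ is ``$\barv$-close to itself'' in a degenerate sense and one runs the same argument with $\tilq=\tilp$ and $\ell=+\infty$; the gcd collapses to $e_Q$ on both sides, again giving $e(\tilw/v)=e(w/v)=1$. Apart from this bookkeeping, the argument is a clean concatenation of Propositions~\ref{pclose}, \ref{ptower} and \ref{pram}, the slogan being that the arithmetic ramification above a point is controlled by its geometric ramification over $\PP^1$, and the latter is preserved by the unramified covering $\phi$.
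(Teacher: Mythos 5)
Your overall strategy is the same as the paper's (chain Propositions~\ref{pclose}, \ref{ptower} and~\ref{pram}, use unramifiedness of~$\phi$ at finite points to get ${\tile_{\tilq}=e_Q}$, and note that ${\ell(\tilp,\tilq,\barv)=\ell(P,Q,\barv)}$ by definition), but two steps do not hold as written. First, when you apply Proposition~\ref{pram} on~$\CC$ you justify its hypothesis by the inclusion ${T_5^{(Q)}\cup T_6^{(Q)}\cup T_7^{(Q)}\subset T}$. This is true only when ${Q\in\QQ}$, that is, when ${\alpha=x(\tilq)}$ is a root of $R(X)$, because $T_5$, $T_6$, $T_7$ are unions over ${Q\in\QQ}$ only. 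Your $\tilq\in\tilqq$ guarantees merely ${\alpha\in\tilaa}$, and $\tilaa$ need not be contained in~$\AA$ (the plane model of~$\tilcc$ may be singular, or $\tilf_0$ may vanish, above an~$\alpha$ with ${R(\alpha)\ne 0}$); so the point ${Q=\phi(\tilq)}$ can lie outside~$\QQ$, and then nothing in the assumption ${v\notin T}$ controls $T_5^{(Q)}$, $T_6^{(Q)}$, $T_7^{(Q)}$. The paper avoids this by a case distinction: if ${\alpha\notin\AA}$ then~$x$ is unramified at~$Q$, hence by unramifiedness of~$\phi$ also ${e_{\tilq}=1}$, so already ${\tile=e_{\tilq}/\gcd(e_{\tilq},\ell)=1}$ and the third option holds without invoking Proposition~\ref{pram} on~$\CC$ at all; the two-sided comparison via Proposition~\ref{ptower} is needed only when ${\alpha\in\AA}$. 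Your argument is easily repaired by inserting this dichotomy, but the inclusion you rely on is false in general.

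Second, the case ${\tilp\in\tilqq}$ is not mere bookkeeping, and your ``degenerate proximity with ${\ell=+\infty}$'' does not settle it: Proposition~\ref{pram} simply does not apply when ${\xi=\alpha}$ (the quantity~$\ell$ of~(\ref{eell}) is undefined there, and $\barv$-adic closeness requires convergence of the Puiseux series at~$\xi$ with one of the sums equal to the value of the function, which is not what happens at the centre of expansion), so the formula you extrapolate proves nothing. The assertion that~$v$ is unramified in $\K(\tilp)$ in this case is a genuine statement which the paper proves using ${v\notin\tilt_1\cup\tilt_6}$: the residue field $\K(\tilp)$ is tied to the field $\K^{(\tilp)}$ generated by the Puiseux coefficients at~$\tilp$ (up to adjoining a root of unity, harmless since ${v\notin\tilt_1}$), and ${v\notin\tilt_6}$ says precisely that~$v$ is unramified in $\K^{(\tilp)}$. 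Without an argument of this kind your treatment of the edge case is a gap.
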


\begin{proof}
Let ${v\in M_\K}$ be a non-archimedean valuation such that ${|\xi|_{v} \le  1}$ and ${v\notin T\cup\tilt\cup U}$. 
Fix an extension~$\barv$ of~$v$ to $\bark$, and let~$\tilw$ and $w$ be the restrictions of~$\barv$ to ${\K(\tilp)}$ and ${\K(P)}$, 
and~$\tile$ and~$e$ their ramification indexes  over~$v$, respectively. We want to show that ${\tile=e}$. 

We may assume that  ${\tilp\ \notin \tilqq}$; otherwise there is nothing to prove by~\eqref{eallintil}.  Proposition~\ref{pclose} applied to the covering ${\tilcc\to \PP^1}$ implies that either ${\tile=1}$ and we are done, or  $\tilp$~is $\barv$-adically close to some $\tilq\in\tilqq$, which will be assumed in the sequel. Now Proposition~\ref{pram} implies  that ${\tile=e_\tilq/\gcd(e_\tilq,\ell)}$. Let~$Q$ be the point of~$\CC$ lying under~$\tilq$. Put ${\alpha =x(\tilq)=x(Q)}$.  If ${\alpha \not \in \AA}$ then the covering ${\CC\mapsto\PP^1}$ does not ramify at~$Q$. Since~$\phi$ is unramified outside the poles of~$x$, the covering ${\tilcc\mapsto\PP^1}$ does not ramify at~$\tilq$, that is, ${e_\tilq=1}$. 
Hence ${\tile=1}$, which means that~$v$ is not ramified in $\K(\tilp)$. 

Now assume that ${\alpha\in\AA}$.  Proposition~\ref{ptower} implies that~$P$ is $\barv$-adically close to~$Q$. Now notice that ${e_Q=e_\tilq}$, again because~$\phi$ is unramified. Also, ${\ell(P,Q,v)=\ell(\tilp,\tilq,v)=\ell}$, just by the definition of this quantity. 
Again using Proposition~\ref{pram}, we obtain  that ${e=e_Q/\gcd(e_Q,\ell) = \tile}$. 
This shows tha~$\tilw$ is unramified over~$w$, completing the proof.\qed
\end{proof}

\medskip

We also need an estimate for ${\vheight( T\cup\tilt\cup U)}$. 
Recall the notation
\begin{equation*}
\begin{gathered}
\Omega= mn^2 \bigl(\pheight(f)+2m+2n\bigr),\qquad
\tilom= \tilm\tiln^2 \bigl(\pheight(\tilf)+2\tilm+2\tiln\bigr),\\
\Upsilon=2\tiln\bigl(\tilm\pheight(f)+m\pheight(\tilf)\bigr). 
\end{gathered}
\end{equation*}

\begin{proposition}
\label{phttu}
Assume~\eqref{eallin},~\eqref{erootsin},~\eqref{eallintil} and~\eqref{erootsintil}, and assume in addition that 
\begin{gather}
\label{eunramf}
\text{the covering ${\CC\stackrel x\to \PP^1}$ does not ramify over the roots of $f_0(X)$},\\
\label{eunramfti}
\text{the covering ${\tilcc\stackrel x\to \PP^1}$ does not ramify over the roots of $\tilf_0(X)$}.
\end{gather} 
Then
\begin{equation}
\label{ettu}
\vheight( T\cup\tilt\cup U)\le 60 (\Omega+\tilom)+\Upsilon.
\end{equation}
\end{proposition}

\begin{proof} 
Combining Propositions~\ref{pquant} and~\ref{puquant}, we obtain the estimate
$$
\vheight( T\cup\tilt\cup U)\le 52(\Omega+\tilom)+\Upsilon+\Xi,
$$
where~$\Xi$ is defined in~\eqref{exi}. A routine calculation show that 
${\Xi\le 6(\Omega+\tilom)}$, which proves~\eqref{ettu}.  \qed
\end{proof}

\medskip

Now we can prove the theorems from the introduction.

\paragraph{Proof of Theorem~\ref{tproj}}
We may replace~$\K$ by~$\K(P)$ and assume that ${P\in \CC(\K)}$. Put ${\xi=x(P)}$.

Assume first that~\eqref{eallin},~\eqref{erootsin},~\eqref{eallintil} and~\eqref{erootsintil} hold, and assume in addition that\footnote{We have to replace here~\eqref{eunramf} and~\eqref{eunramfti} by more restrictive conditions~\eqref{eunramff} and~\eqref{eunramftifti} because in the proof we deal not only with the function~$x$, but with~$x^{-1}$ as well.} 
\begin{gather}
\label{eunramff}
\text{the covering ${\CC\stackrel x\to \PP^1}$ does not ramify over the roots of $f_0(X)X^mf_0(X^{-1})$}, \\
\label{eunramftifti}
\text{the covering ${\tilcc\stackrel x\to \PP^1}$ does not ramify over the roots of $\tilf_0(X)X^\tilm\tilf_0(X^{-1})$}.
\end{gather} 
 Theorem~\ref{tmain} and estimate~\eqref{ettu} imply that 
$$
\vheight\bigl(\{v \in \ram(\K(\tilP)/\K): |\xi|_v\le 1\}\bigr) \le 60 (\Omega+\tilom)+\Upsilon. 
$$
Replacing~$x$ by~$x^{-1}$ and the polynomials~$f$,~$\tilf$ by ${X^mf(X^{-1},Y)}$ and ${X^\tilm\tilf(X^{-1},Y)}$, respectively, we obtain the estimate 
$$
\vheight\bigl(\{v \in \ram(\K(\tilP)/\K): |\xi|_v\ge 1\}\bigr) \le 60 (\Omega+\tilom)+\Upsilon. 
$$
Thus, 
$$
\vheight\bigl(\ram(\K(\tilP)/\K)\bigr) \le 120 (\Omega+\tilom)+2\Upsilon, 
$$
and  Lemma~\ref{ldehen} implies that 
\begin{equation}
\label{eassuall}
\partial_{\K(\tilp)/\K} \le \frac{\nu-1}\nu\bigl(120 (\Omega+\tilom)+2\Upsilon\bigr)+1.26\nu\le 120 (\Omega+\tilom)+2\Upsilon. 
\end{equation}

Now let us relax our assumptions. Suppose that we no longer assume~\eqref{eallin},~\eqref{erootsin},~\eqref{eallintil} and~\eqref{erootsintil}, but continue to assume~\eqref{eunramff} and~\eqref{eunramftifti}.  Then~\eqref{eassuall} should be replaced by
\begin{equation}
\label{eassufew}
\partial_{\L(\tilp)/\L} \le 120 (\Omega+\tilom)+2\Upsilon, 
\end{equation}
where~$\L$ is the compositum of the fields $\K(Q)$, $\K(\tilq)$  and the fields generated over~$\K$ by $e_Q$-th and $e_\tilq$-th roots of unity, for all ${Q\in\QQ}$ and ${\tilq\in\tilqq}$.
Proposition~\ref{pcompositum} implies that 
${\partial_{\L/\K}\le 110(\Omega+\tilom)}$. Hence 
$$
\partial_{\K(\tilp)/\K}\le \partial_{\L(\tilp)/\K}\le \partial_{\L(\tilp)/\L}+ \partial_{\L/\K}\le 230(\Omega+\tilom) +2\Upsilon. 
$$

Finally, suppose that we no longer assume~\eqref{eunramff} and~\eqref{eunramftifti} either. All finite ramification points are contained in the set~$\AA$. Hence there is at most ${|\AA|\le (2n-1)m}$ finite ramification points. It follows that there exists a root of unity~$\zeta$ of order $4m^2n$ such that ${f(X, \zeta)X^mf(X^{-1},\zeta)\vert_{X=\alpha}\ne 0}$ for any finite ramification point~$\alpha$. Now instead of the function~$y$ we consider the new function ${z=(y-\zeta)^{-1}\in \bar\K(\CC)}$. It satisfies the equation ${g(x,z)=0}$, where the polynomial 
$$
g(X,Z)=Z^nf(X,\zeta+Z^{-1})=g_0(X)Z^n+g_1(X)Z^{n-1}+\cdots+g_n(X) \in \K(\zeta)[X,Z]
$$
satisfies 
\begin{equation}
\label{enewg}
\deg_Xg=m, \quad \deg_Zg=n, \quad \pheight(g) \le \pheight(f)+ 2n\log 2
\end{equation}
(we use Corollary~\ref{cfalpha}). Also,
${\partial_{\K(\zeta)/\K} \le \log(4m^2n)}$ by Lemma~\ref{lsilv}. 

We have ${g_0(X)X^mg_0(X^{-1})=f(X,\zeta)X^mf(X^{-1},\zeta)}$, and by the choice of~$\zeta$ the covering ${\CC\stackrel x\to \PP^1}$ does not ramify over the roots of ${g_0(X)X^mg_0(X^{-1})}$. 

In the same way we find a root of unity~$\tilzeta$ of order ${4\tilm^2\tiln}$ such that the function ${\tilz=(\tily-\tilzeta)^{-1}}$ satisfies ${\tilg(x,\tilz)=0}$ with ${g(X,\tilZ)\in \K(\tilzeta)[X,\tilZ]}$ satisfying 
\begin{equation}
\label{enewtilg}
\deg_X\tilg=\tilm, \quad \deg_\tilZ\tilg=\tiln, \quad \pheight(\tilg) \le \pheight(\tilf)+ 2\tiln\log 2
\end{equation}
and the covering ${\tilcc\stackrel x\to \PP^1}$ is unramified over the roots of the polynomial ${\tilg_0(X)X^\tilm\tilg_0(X^{-1})}$. Also, ${\partial_{\K(\tilzeta)/\K} \le \log(4\tilm^2\tiln)}$.

Thus,~\eqref{eunramff} and~\eqref{eunramftifti} hold with~$f$,~$\tilf$ replaced by~$g$,~$\tilg$. It follows that 
$$
\partial_{\K(\zeta,\tilzeta)(\tilp)/\K(\zeta,\tilzeta)}\le  230(\Omega'+\tilom') +2\Upsilon', 
$$
where~$\Omega'$,~$\tilom'$ and~$\Upsilon'$ are defined like~$\Omega$,~$\tilom$ and~$\Upsilon$ but with~$f$,~$\tilf$ replaced by~$g$,~$\tilg$. Hence 
\begin{align}
\partial_{\K(\tilp)/\K}&\le  230(\Omega'+\tilom') +2\Upsilon'+\partial_{\K(\zeta)/\K}+\partial_{\K(\tilzeta)/\K}\nonumber\\
\label{ealmost} 
&\le  230(\Omega'+\tilom') +2\Upsilon'+\log(4m^2n)+ \log(4\tilm^2\tiln). 
\end{align}
A messy calculation using~\eqref{enewg} and~\eqref{enewtilg}  shows that the right-hand side of~\eqref{ealmost} does not exceed ${400(\Omega+\tilom)+2\Upsilon+ 6m\tiln^2}$. Theorem~\ref{tproj} is proved.  \qed

\paragraph{Proof of Theorem~\ref{taff}}
Let~$S'$ be set of places of the field~$\K(P)$ extending the places from~$S$. The right-hand side of~\eqref{eaff} will not increase (see item~\ref{isl} of Proposition~\ref{pvhei})  if we replace~$\K$ by~$\K(P)$ and~$S$ by~$S'$. Thus, we may assume that ${P\in \CC(\K)}$. As in the proof of Theorem~\ref{tproj} assume first that~\eqref{eallin},~\eqref{erootsin},~\eqref{eallintil} and~\eqref{erootsintil} hold, and in addition assume\footnote{In this proof we deal only with the function~$x$, and do not need $x^{-1}$, as we did in the projective case. Therefore we may assume~\eqref{eunramf} and~\eqref{eunramfti}, and do not need more restrictive~\eqref{eunramff} and~\eqref{eunramftifti}.}~\eqref{eunramf} and~\eqref{eunramfti}. Again using Theorem~\ref{tmain} and~\eqref{ettu}, we obtain 
$$
\vheight\bigl(\ram(\K(\tilp)/\K)\smallsetminus S \bigr) \le 60 (\Omega+\tilom)+\Upsilon,
$$
and applying Lemma~\ref{ldehen}, we obtain
$$
\partial_{\K(\tilp)/\K} \le 60 (\Omega+\tilom)+\Upsilon+\vheight(S). 
$$
Now we get rid of the assumptions~\eqref{eallin},~\eqref{erootsin},~\eqref{eallintil},~\eqref{erootsintil},~\eqref{eunramf} and~\eqref{eunramfti} in exactly the same manner as we did in the proof of Theorem~\ref{tproj}. The details are routine, we leave them out. 
\qed 

\medskip

To prove Theorem~\ref{tminim}, we need the following result from~\cite{BS10}. 

\begin{theorem}
\label{ter}
Let ${x:\CC\to\PP^1}$ be a finite covering of degree ${n\ge 2}$, defined over~$\K$ and unramified outside a finite set ${A\subset \PP^1(\bark)}$. Put ${h=\aheight(A)}$ and 
${\Lambda'=\bigl(2(\genus+1)n^2\bigr)^{10\genus n+12n}}$,
where ${\genus=\genus(\CC)}$. 
Then there exists  a rational function ${y\in \bark(\CC)}$ such that ${\bark(\CC) =\bark(x,y)}$ and the  rational functions ${x,y\in \bark(\CC)}$ satisfy the equation ${f(x,y)=0}$, where ${f(X,Y)\in \bark[X,Y]}$ is an absolutely irreducible polynomial satisfying
\begin{equation}
\deg_Xf =\genus+1, \qquad \deg_Yf=n, \qquad \pheight(f) \le \Lambda' (h+1).
\end{equation}
Moreover, the number field~$\L$, generated over~$\K$ by the set~$A$ and by the coefficients of~$f$ satisfies
${\partial_{\L/\K(A)}\le \Lambda' (h+1)}$. 
\end{theorem}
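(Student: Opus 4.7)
My plan is to produce $y$ via Riemann--Roch on $\CC$ applied to a carefully chosen pole divisor, to extract the defining polynomial $f(X,Y)$ from the first linear dependence among monomials $x^iy^j$, and to control heights throughout by working with an explicit small-height projective model of $\CC$ depending only on the ramification data $A$.

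\emph{Step 1 (construction of $y$).} I would first fix a $\bark$-rational point $P_0\in\CC(\bark)$ that is not above $A\cup\{\infty\}$, not a ramification point of $x$, and of controlled height (this last property I obtain by intersecting any small-height projective model of $\CC$ with a generic hyperplane and selecting one of the intersection points of minimal height). By Riemann--Roch, $\dim\LL((\genus+1)P_0)\ge 2$, so a generic element has a pole of order exactly $\genus+1$ at $P_0$ and no other poles. I choose such a $y$ as a $\bark$-linear combination of a basis of $\LL((\genus+1)P_0)$, generic enough to guarantee $\bark(x,y)=\bark(\CC)$: only finitely many linear combinations fail this, and failure is cut out by a single non-vanishing condition whose height I can control.

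\emph{Step 2 (existence and degrees of $f$).} Since $\bark(x,y)=\bark(\CC)$ and $[\bark(\CC):\bark(x)]=n$, the minimal polynomial $f$ of $y$ over $\bark(x)$ satisfies $\deg_Yf=n$. To pin down $\deg_Xf=\genus+1$, I count Riemann--Roch dimensions: the $\bark$-span of the monomials $\{x^iy^j:0\le i\le\genus+1,\ 0\le j\le n\}$ sits inside $\LL(D)$ for an explicit divisor $D$ supported at $P_0$ and at the poles of $x$, and the dimension of $\LL(D)$ is strictly less than the number of monomials. The first linear dependence must be a multiple of the minimal polynomial, and a direct pole count at $P_0$ forces the $X$-degree to equal $\genus+1$.

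\emph{Step 3 (height of $f$).} This is the main obstacle. Starting from a small-height projective embedding $\CC\hookrightarrow\PP^N$ whose defining ideal has height polynomial in $\aheight(A)+1$ (this is the geometric input that forms the heart of~\cite{BS09}), I realize a basis of each Riemann--Roch space appearing in Step~2 as hyperplane sections of controlled degree and height. The coefficients of $f$ are then obtained as a nonzero kernel vector of an explicit matrix whose entries are polynomial expressions of bounded degree in the coordinates of $P_0$, of the points above $\infty$, and of the embedding equations. A Siegel-lemma (or Bombieri--Vaaler) argument produces a kernel vector of height bounded by an explicit power of $\genus$, $n$ and $\aheight(A)+1$; iterated applications of Lemma~\ref{lsombrapol} through the pole-order book-keeping yield the exponent $10\genus n+12n$.

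\emph{Step 4 (discriminant of $\L/\K(A)$).} The field $\L$ is generated over $\K(A)$ by the coordinates of $P_0$ and by the coefficients of $f$. Since the heights of all these generators were controlled in Steps~1--3 by $\Lambda'(h+1)$ (up to constants that can be absorbed into the exponent), Silverman's estimate (Lemma~\ref{lsilv}) applied to this generator vector delivers the claimed bound $\partial_{\L/\K(A)}\le\Lambda'(h+1)$.

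The decisive difficulty is Step~3: translating the abstract Riemann--Roch construction into explicit height bounds requires a small-height model of $\CC$ whose parameters depend only on $A$, together with meticulous height accounting when intersecting and eliminating. This is precisely the content of~\cite{BS09}, and it is the step where the apparently large exponent $10\genus n+12n$ arises from iterating Gelfond-type product inequalities over a number of steps linear in $\genus n$.
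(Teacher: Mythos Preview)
The present paper does not prove Theorem~\ref{ter} at all: it is quoted verbatim as ``the following result from~\cite{BS09}'' and used as a black box in the proof of Theorem~\ref{tminim}. There is therefore no proof in this paper to compare your attempt against.

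That said, your outline is a plausible high-level description of the strategy one expects in~\cite{BS09}: build~$y$ by Riemann--Roch at a single point of controlled height, read off~$f$ from the first linear relation among $x^iy^j$, and bound $\pheight(f)$ by Siegel-lemma/linear-algebra estimates on an explicit small-height model. You yourself flag Step~3 as ``precisely the content of~\cite{BS09}'', which is honest but also means your write-up is a summary rather than a proof: the substantive work --- producing a projective model of~$\CC$ whose defining equations have height polynomial in $\aheight(A)+1$ using only the ramification locus~$A$, and tracking the exponent $10\genus n+12n$ through the elimination --- is deferred entirely to the reference. A few points would need genuine care if you were to carry this out: ensuring $\deg_X f$ is \emph{exactly} $\genus+1$ (not merely $\le \genus+1$) requires choosing~$P_0$ and~$y$ so that no accidental cancellation occurs; finding a $\bark$-point~$P_0$ of bounded height on~$\CC$ is itself nontrivial and typically goes through the small-height model you have not yet constructed; and the discriminant bound in Step~4 needs the degree $[\L:\K(A)]$ controlled as well, since Lemma~\ref{lsilv} has a factor $\nu-1$ in front of the height.
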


\paragraph{Proof of Theorem~\ref{tminim}}
We shall prove the ``projective'' case (that is, item~\ref{iproj}) of this theorem. The ``affine'' case is proved similarly.

We define~$\tilam'$ in the same way as~$\Lambda'$ in Theorem~\ref{ter}, but with~$n$ and~$\genus$ replaced by~$\tiln$ and~$\tilgen$. 
We use Theorem~\ref{ter} to find functions ${y\in \bark(\CC)}$ and ${\tily \in \bark(\tilcc)}$, and polynomials ${f(X,Y) \in \bark[X,Y]}$ and ${\tilf(X,\tilY ) \in \bark[X,\tilY ]}$. Denoting by~$\L$ the field generated by the set~$A$ and the coefficients of both the polynomials, we find 
${\partial_{\L/\K(A)}\le (\Lambda'+\tilam') (h+1)}$ with ${h=\aheight(A)}$. 
Using Lemma~\ref{lsilv}, we estimate 
${\partial_{\K(A)/\K} \le 2(\delta-1)h+\log\delta}$. Hence
$$
\partial_{\L/\K}\le \bigl(\Lambda'+\tilam'+2(\delta-1)\bigr) (h+1). 
$$
We define the quantities~$\Omega$,~$\tilom$ and~$\Upsilon$ as in the introduction. Then, applying Theorem~\ref{tproj}, but over the field~$\L$ rather than~$\K$, we find
${\partial_{\L(\tilp)/\L(P)}\le 400(\Omega+\tilom)+2\Upsilon+6m\tiln^2}$. We have
$$
\partial_{\K(\tilp)/\K(P)} \le \partial_{\L(\tilp)/\K(P)} = \partial_{\L(\tilp)/\L(P)}+\partial_{\L(P)/\K(P)} \le
\partial_{\L(\tilp)/\L(P)}+\partial_{\L/\K}. 
$$
The last sum is bounded by 
$$
400(\Omega+\tilom)+2\Upsilon+ 6m\tiln^2 +\bigl(\Lambda'+\tilam'+2(\delta-1)\bigr) (h+1),
$$
which, obviously, does not exceed ${\Lambda(h+1)}$, as wanted. \qed

{\footnotesize

}

\bigskip

\begin{tabular}{lll}
\textbf{Yuri Bilu}&\textbf{Marco Strambi}&\textbf{Andrea Surroca}\\
IMB, Universit\'e Bordeaux 1&via del Litorale 145&Mathematisches Institut\\
351 cours de la Libération&Antignano, Livorno&Universit\"at Basel\\
33405 Talence CEDEX&57128 Italy&Rheinsprung 21\\
France&&CH-4051 Basel\\
&&Switzerland
\end{tabular}

\end{document}